\newcommand{\aaa}{\mathbf{A}}
\newcommand{\aat}{A}
\newcommand{\aatp}{A_P}
\newcommand{\bb}{\mathbf{B}}
\newcommand{\bbt}{B}
\newcommand{\bbtp}{B_P}
\newcommand{\cala}{{\mathcal A}}
\newcommand{\calb}{{\mathcal B}}
\newcommand{\calc}{{\mathcal C}}
\newcommand{\calf}{{\mathcal F}}
\newcommand{\calh}{{\mathcal H}}
\newcommand{\calm}{{\mathcal M}}
\newcommand{\calmdgr}{{\mathcal M}^{\rm gr}_d}
\newcommand{\calms}{{\mathcal M}^{\rm st}}
\newcommand{\calp}{{\mathcal P}}
\newcommand{\calt}{{\mathcal T}}
\newcommand{\calv}{{\mathcal V}}
\newcommand{\calvdgr}{{\mathcal V}^{\rm gr}_d}
\newcommand{\calz}{{\mathcal Z}}
\newcommand{\catA}[1]{{\mathfrak A}}
\newcommand{\catI}[1]{{\mathfrak I}}
\newcommand{\catS}[1]{{\mathfrak S}}
\newcommand{\cpx}{\mathbb{C}}
\newcommand{\F}{{\mathbb F}}
\newcommand{\id}{{\mathbf 1}}
\newcommand{\iit}{I}
\newcommand{\iitp}{I_P}
\newcommand{\oh}{{\mathcal O}}
\newcommand{\opn}{{\mathcal O}_{\mathbb{P}^n}}
\newcommand{\ox}{{\mathcal O}_{\mathbb{X}}}
\newcommand{\p}[1]{{\mathbb{P}^{#1}}}
\newcommand{\pd}{{\mathbb{P}^d}}
\newcommand{\pn}{{\mathbb{P}^n}}
\newcommand{\pum}{\mathbb{P}^1}
\newcommand{\simto}{\stackrel{\sim}{\to}}
\newcommand{\tW}{\overline{W}}
\newcommand{\xx}{\mathbb{X}}
\newcommand{\xxt}{X}
\newcommand{\yy}{\mathbb{Y}}
\newcommand{\Z}{\mathbb{Z}}
\DeclareMathOperator{\codim}{{codim}}
\DeclareMathOperator{\Hom}{Hom}
\DeclareMathOperator{\ho}{H}
\DeclareMathOperator{\spec}{Spec }
\newtheorem{theorem}{Theorem}[section]
\newtheorem{proposition}[theorem]{Proposition}
\newtheorem{lemma}[theorem]{Lemma}
\newtheorem{corollary}[theorem]{Corollary}
\newtheorem{remark}[theorem]{Remark}
\newtheorem{example}[theorem]{Example}
\newtheorem{definition}[theorem]{{\bf Definition}}
\begin{document}

\title{ADHM construction of perverse instanton sheaves}

\author{Abdelmoubine Amar Henni}
\author{Marcos Jardim}
\author{Renato Vidal Martins}

\address{IMECC - UNICAMP \\
Departamento de Matem\'atica \\ Caixa Postal 6065 \\
13083-970 Campinas-SP, Brazil}
\email{jardim@ime.unicamp.br}
\address{ICEx - UFMG \\
Departamento de Matem\'atica \\
Av. Ant\^onio Carlos 6627 \\
30123-970 Belo Horizonte MG, Brazil}
\email{renato@mat.ufmg.br}
\address{IMECC - UNICAMP \\
Departamento de Matem\'atica \\
13083-970 Campinas-SP, Brazil}
\email{henni@ime.unicamp.br}
\maketitle

\begin{abstract}
We present a construction of framed torsion free instanton sheaves on a projective variety containing a fixed line which further generalizes the one on projective spaces. This is done by generalizing the so called ADHM variety. We show that the moduli space of such objects is a quasi projective variety, which is fine in the case of projective spaces. We also give an ADHM categorical description of perverse instanton sheaves in the general case, along with a hypercohomological characterization of these sheaves in the particular case of projective spaces.
\end{abstract}


\section{Introduction}

An unexpected connection between theoretical physics and algebraic geometry appeared in the late 1970's, when Atiyah, Drinfeld, Hitchin and Manin provided a complete classification of instantons on the 4-dimensional sphere $S^4$ using algebraic geometric techniques. More precisely, these authors used the Penrose-Ward correspondence between instantons on $S^4$ and certain holomorphic vector bundles on $\p3$ together with a characterization of holomorphic vector bundles on $\p3$ due to Horrocks \cite{ADHM}.

Nowadays, such link between theoretical physics and algebraic geometry can be found many forms, perhaps the most prominent of which is the so-called Hitchin-Kobayashi correspondence.

Later, Donaldson noticed in \cite{D1} that (framed) instantons on $S^4$ were also in correspondence with (framed) holomorphic bundles on $\p2$, while Nakajima considered in \cite{N2} framed torsion-free sheaves on $\p2$.

On a different direction, Mamone Capria and Salamon \cite{MCS} generalized the Penrose-Ward correspondence to a correspondence between quaternionic instantons on $\mathbb{H}\mathbb{P}^k$ and certain holomorphic
vector bundles on $\mathbb{P}^{2k+1}$. This paper motivated Okonek and Spindler to introduce the notion of \emph{mathematical instanton bundles} on $\mathbb{P}^{2k+1}$ \cite{OS}. Since then, such objects have attracted the attention of many authors, see for instance \cite{AO,CTT,ST} and the references therein.

More recently, the following generalization of mathematical instanton bundles was proposed in \cite{J-i}: an {\em instanton sheaf} on $\pn$ ($n\geq2$) is a torsion free sheaf $E$ which is the cohomology of a \emph{linear monad} of the form:
\begin{equation} \label{eqone}
\opn(-1)^{\oplus c} \longrightarrow \opn^{\oplus a} \longrightarrow\opn(-1)^{\oplus c}.
\end{equation}
When so, $c=h^1(E(-1))$ and is called the {\em charge} of $E$, while $a=2c+r$ where $r$ is its rank. In general, a sheaf $E$ on $\pn$ is said of {\em trivial splitting type} if there is an isomorphism $\phi:E|_\ell\to{\mathcal O}_{\ell}^{\oplus r}$ for some line $\ell\subset\pn$, and, if so, the pair $(E,\phi)$ is called a {\em framed sheaf}. With these definitions in mind, a mathematical instanton bundle in the sense of \cite{AO,OS} is a rank $2k$ locally free instanton sheaf on $\mathbb{P}^{2k+1}$ of trivial splitting type. Moreover, the framed torsion free sheaves considered by Nakajima in his extension of Donaldson's classification of framed holomorphic vector bundles \cite[Ch. 2]{N2} are precisely the framed instanton sheaves on $\p2$. Similarly to what was done in these two cases, cohomological characterizations of torsion free instanton sheaves on $\pn$ can be found, for instance, in \cite{CM,J-i}.

The concept of framed instanton sheaf readily generalizes from a projective space to a projective variety $\yy$ containing a line $\ell\subset\yy$, simply by substituting $\yy$ for $\pn$ in the monad above. This idea has already been explored, for instance, in L. Costa and R. M. Mir\'o-Roig's \cite{CM}. The goal of this paper is to present a construction of framed torsion free instanton sheaves on $\yy$ which further generalizes the construction of framed torsion free sheaves on $\p2$ done in \cite{D1,N2} and on $\p3$ \cite{FJ2};  these are in turn a generalization of the original ADHM construction of instantons \cite{ADHM,D1}. A research announcement outlining the case $\yy=\pn$ appeared in \cite{J-cr}.

In this way, we provide an explicit parametrization of the moduli space of framed instanton sheaves on $\yy$ via matrices satisfying certain quadratic equations. The first step is generalizing ADHM data and even the very ADHM equation, which we do in, respectively, Sections \ref{subdat} and \ref{subadv}. Afterwards, we build the {\em quotient ADHM variety} $\mathcal{M}^{\rm st}_{\yy}$ {\em of stable points} in Section \ref{subqav}, as a natural candidate variety for the moduli problem. We prove in Theorem \ref{prpoto} that an open set within $\mathcal{M}^{\rm st}_{\yy}$, consisting of what we call {\em globally weak stable} points, is indeed in 1-1 correspondence to isomorphism classes of framed torsion free instanton sheaves on $\yy$ (with fixed rank and charge); hence the moduli of framed torsion free instanton sheaves in $\yy$ is a quasi projective variety since $\mathcal{M}^{\rm st}_{\yy}$ is so. A more general statement holds in the case $\yy=\pn$. In fact, we prove in Theorem \ref{thmfin} that the moduli space of framed instantons bundles on $\pn$ is fine.

Motivated by \cite{BFG,HL}, we also consider \emph{perverse instanton sheaves} on $\yy$ in Section \ref{perverse}, generalizing what was done by the second and third named authors in \cite{JVM2}. More precisely, A. Braverman, M. Finkelberg and D. Gaitsgory proved a result due to Drinfeld, \cite[Thm. 5.7]{BFG}, which extended Nakajima-Donaldson's discussion from torsion free to \emph{perverse} sheaves on $\p2$. Recently in \cite{HL}, M. Hauzer and H. Langer discussed perverse instanton sheaves on $\p3$ generalizing the ADHM construction of framed torsion free instanton sheaves in \cite{FJ2}, together with a more refined study of stability. Here, we further generalize the results in these two papers.

Roughly speaking, perverse instanton sheaves are complexes of sheaves of the form (\ref{eqone}) regarded as objects in the core of particular t-structures on $D^b(\yy)$ constructed in Subsections \ref{kashiwara} and \ref{tilting}. The extension of the theory from torsion free to perverse instanton sheaves corresponds to disregard the ``globally weak stable" condition on ADHM data satisfying the ADHM equation. This is the content of Theorem \ref{thm111}, a connection between what we call the {\em ADHM category} and the one of perverse instanton sheaves. Again, in the very case where $\yy=\pn$ it is also possible to give a hypercohomological characterization of perverse instanton sheaves which generalizes the well known cohomological characterization of torsion free instanton sheaves. This is Theorem \ref{thmfim}, which closes the paper.

\bigskip

\paragraph{\bf Acknowledgments.}
The first named author is supported by the FAPESP post-doctoral grant number 2009/12576-9. The second named author is partially supported by the CNPq grant number 302477/2010-1 and the FAPESP grant number 2005/04558-0. The third named author is partially supported by the CNPq grant number 304919/2009-8.


\section{Generalized ADHM Data}
\label{secdat}

For the remainder, $\yy$ is a projective scheme in $\pn=\mathbb{P}^n_\cpx$ which contains a line $\ell$. Fix homogeneous coordinates $(z_0:\ldots :z_d:x:y)\in\pn$ where $d=n-2$ such that $\ell$ is given by the equations $z_0=\ldots=z_d=0$. Set
$$
\ho_\yy:=\langle z_0,\ldots,z_d\rangle\subset\ho^0(\oh_\yy(1)).
$$


\subsection{The ADHM Data}
\label{subdat}

Let $V$ and $W$ be complex vector spaces of dimension, respectively, $c$ and $r$. Set
$$
\aaa:={\rm End}(V)^{\oplus 2}\oplus{\rm Hom}(W,V)\ \ \ \ \ \ \ \ \ \aaa':={\rm End}(V)^{\oplus 2}\oplus {\rm Hom}(V,W)
$$
$$
\bb=\bb(W,V)=\bb(r,c):=\aaa\times\aaa'
$$
and consider the affine spaces
$$
\aaa_\yy:=\aaa\otimes \ho_\yy\ \ \ \ \ \ \ \ \ \aaa'_\yy:=\aaa'\otimes \ho_\yy
$$
$$
\bb_{\yy}=\bb_{\yy}(W,V)=\bb_{\yy}(r,c):=\aaa_\yy\times\aaa'_\yy
$$

A point of $\mathbf{B}_\yy$ will be called in this paper an \emph{ADHM datum over $\yy$}. The ADHM data over $\yy=\pn$ were considered in \cite{J-cr} in a slightly different way; it will be the most relevant case in the present paper. It is important to point out that the subspace $\langle x,y\rangle\subset\ho^0(\mathcal{O}_\yy(1))$ will play no special role until Subsection \ref{subadv}.

One can write a point of $X\in\mathbf{B}_\yy$ as
$$
X=(Y,Y')
$$
with
$$
Y=(A,B,I)\ \ \ \ \ \ \ \ \ Y'=(A',B',J)
$$
where the above components are
$$
A = A_{0}\otimes z_0 + \cdots + A_{d}\otimes z_d\ \ \ \ \ \ \ \
A' = A_{0}'\otimes z_0 + \cdots + A_{d}'\otimes z_d
$$
$$
B = B_{0}\otimes z_0 + \cdots + B_{d}\otimes z_d\ \ \ \ \ \ \ \
B' = B_{0}'\otimes z_0 + \cdots + B_{d}'\otimes z_d
$$
$$
I = I_0\otimes z_0 + \cdots + I_d\otimes z_d\ \ \ \ \ \ \ \
J = J_0\otimes z_0 + \cdots + J_d\otimes z_d\
$$

\

\noindent with $A_{k}, B_k, A_{k}', B_k' \in {\rm End}(V)$, $I_k \in {\rm Hom}(W,V)$ and $J_k \in {\rm Hom}(V,W)$. Hence we naturally regard $A,B,A',B'\in {\rm Hom}(V,V\otimes \ho_\yy)$, and also $I\in {\rm Hom}(W,V\otimes \ho_\yy)$ and $J\in {\rm Hom}(V,W\otimes \ho_\yy)$. Setting $Y_k:=(A_k,B_k,I_k)\in\aaa$ and $Y_k':=(A_k',B_k',J_k)\in\aaa'$, sometimes it is convenient to write
$$
Y=Y_0\otimes z_0+\ldots+Y_d\otimes z_d=(Y_0,\ldots,Y_d)\in \aaa^{d+1}
$$
$$
Y'=Y_0'\otimes z_0+\ldots+Y_d'\otimes z_d=(Y_0',\ldots,Y_d')\in (\aaa')^{d+1}
$$
and consider the ADHM datum as
$$
X=(Y,Y')\in\aaa^{d+1}\times(\aaa')^{d+1}.
$$
For any $P\in\yy$ we define the \emph{evaluation maps} given on generators by
\begin{gather*}
\begin{matrix}
{\rm ev}_{P}^1: &\aaa_\yy & \longrightarrow & \mathbb{P}(\aaa)\\
               & Y_i\otimes z_i              & \longmapsto     & [z_i(P)Y_i]
\end{matrix}
\ \ \ \ \ \ \ \
\begin{matrix}
{\rm ev}_P^2: &\aaa'_\yy & \longrightarrow & \mathbb{P}(\aaa')\\
               & Y_i'\otimes z_i              & \longmapsto     & [z_i(P)Y_i'].
\end{matrix}
\end{gather*}
Note that $z_i(P)\in\cpx$ depends on a choice of trivialization of $\oh_\yy(1)$ at $P$ but the class on projective space does not. We set $Y_P:={\rm ev}_{P}^{1}(Y)$ and, similarly, $Y_P':={\rm ev}_{P}^{2}(Y')$. In particular, $A_P$, $B_P$, $A'_P$, $B'_P$, $I_P$ and $J_P$ are defined as well. For any subspace $S\subset V$, we are able to naturally well define the subspaces $A_P(S),B_P(S),A_P'(S),B_P'(S),I_P(W)$ and $\ker J_P$ of $V$. We also consider
\begin{gather*}
\begin{matrix}
{\rm ev}_{P}: &\mathbf{B}_{\yy} & \longrightarrow & \mathbb{P}(\aaa)\times\mathbb{P}(\aaa')\\
               & (Y, Y')              & \longmapsto     & (Y_P, Y_P')
\end{matrix}
\end{gather*}
and set $X_P:={\rm ev}_{P}(X)$. With this in mind we define the following.

\begin{definition}
Let $Y=(A,B,I)\in\aaa_\yy$ and $Y'=(A',B',J)\in\aaa'_\yy$. Let also $P$ be a point in $\yy$.
\begin{enumerate}
\item[(i)]$Y_P$ is said {\em stable} if there is no proper subspace $S\subset V$ for which hold the inclusions  $A_P(S),B_P(S),I_P(W)\subset S$;
\item[(ii)] $Y'_P$ is said {\em costable} if there is no nonzero subspace $S\subset V$ for which hold the inclusions $A_P'(S),B_P'(S)\subset S\subset\ker J_P$;
\item[(iii)]$Y_P$ is said {\em weak stable} if there is no subspace $S\subset V$ of codimension $1$ for which hold the inclusions  $A_P(S),B_P(S),I_P(W)\subset S$;
\item[(iv)] $Y'_P$ is said {\em weak costable} if there is no subspace $S\subset V$ of dimension $1$ for which hold the inclusions $A_P'(S),B_P'(S)\subset S\subset\ker J_P$;
\item[(v)] $Y$ is said {\em stable} if there is no proper subspace $S\subset V$ for which hold the inclusions  $A(S),B(S),I(W)\subset S\otimes \ho_\yy$;
\item[(vi)] $Y'$ is said {\em costable} if there is no nonzero subspace $S\subset V$ for which hold the inclusions $A'(S),B'(S)\subset S\otimes\ho_\yy$ and $S\subset \ker J$;
\item[(vii)] $Y$ is said {\em locally} (resp. {\rm globally}) {\rm stable} (corresp. {\rm weak stable}) if $Y_P$ is stable (corresp. weak stable) for some (resp.  every) $P\in \yy$;
\item[(viii)] $Y'$ is said {\em locally} (resp. {\rm globally}) {\rm costable} (corresp. {\rm weak costable}) if $Y_P$ is costable (corresp. weak costable) for some (resp.  every) $P\in \yy$.
\end{enumerate}
\end{definition}

We are now able to introduce the key definitions of this paper.

\begin{definition}
The datum $X=(Y,Y')\in\mathbf{B}_\yy$ is said
\begin{enumerate}
\item[(i)] {\rm stable} (resp. {\rm locally stable, locally weak stable, globally stable, globally weak stable}) if $Y$ is stable (resp. locally stable, locally weak stable, globally stable, globally weak stable);
\item[(ii)] {\em costable} (resp. {\rm locally costable, locally weak costable, globally costable, globally weak costable}) if $Y'$ is costable (resp. locally costable, locally weak costable, globally costable, globally weak costable);
\item[(iii)] {\em regular} (resp. {\rm locally regular, locally weak regular, globally regular, globally weak regular}) if it is both stable and costable (resp. locally stable and locally costable, locally weak stable and locally weak costable, globally stable and globally costable, globally weak stable and globally weak costable).
\end{enumerate}
\end{definition}

\begin{remark} \rm
For $\yy=\mathbb{P}^2$, i.e., $\mathbf{B}_\yy=\mathbf{B}$, stability, costability and regularity essentialy coincide with the usual notions for ADHM data (cf. \cite[Thm. 2.1]{N2}). For $\yy=\pn$, the present notions of global stability and global regularity correspond, respectively, to stability and regularity in \cite[p. 29]{FJ2} and \cite[Def. 2.1]{J-cr}; global stability along with regularity here correspond to semiregularity in \cite{FJ2,J-cr}; and the present notions of stability, costability and regularity have no parallel in \cite{FJ2,J-cr}.
\end{remark}

\begin{definition}
\label{defsig}
Let $Y=(A,B,I)\in\aaa_\yy$ and $P\in\yy$. The \emph{stabilizing subspace} $S_{Y_P}$ is the intersection of all subspaces $S\subset V$ for which hold the inclusions $A_P(S),B_P(S),I_P(W)\subset S$. The \emph{stabilizing subspace} $S_Y$ is the intersection of all subspaces $S\subset V$ such that $A(S),B(S),I(W)\subset S\otimes\ho_\yy$.
\end{definition}

If $S\subseteq V$ satisfies $A(S),B(S),I(W)\subset S\otimes\ho_\yy$, then one may consider
$$
Y|_S:=(A|_S,B|_S,I)\in\aaa_\yy(W,S).
$$
It is clear that $Y|_{S_{Y}}$ is stable and this justifies the term we use. A similar statement holds for points, that is, if $Y_P|_S:={\rm ev}_P^1(Y|_S)$ then $Y_{P}|_{S_{Y_P}}$ is stable as well. Moreover, $Y$ is stable if and only if $S_{Y}=V$ and $Y_{P}$ is stable if and only if $S_{Y_{P}}=V$.

\begin{proposition}
\label{prpst1}
$S_{Y_{P}}\subset S_{Y}$ for every $P\in\yy$. In particular, if $Y$ is locally stable then it is stable.
\end{proposition}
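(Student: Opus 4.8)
The plan is to show that any subspace $S \subset V$ witnessing the global stability structure at a point $P$ --- i.e., satisfying $A_P(S), B_P(S), I_P(W) \subset S$ --- already witnesses the corresponding structure globally, so that the intersection defining $S_Y$ is taken over a family of subspaces that includes all those defining $S_{Y_P}$; the inclusion $S_{Y_P} \subset S_Y$ then follows by comparing intersections. The subtlety is that the two conditions are superficially different: $S_Y$ is defined via $A(S), B(S), I(W) \subset S \otimes \ho_\yy$, a condition in $V \otimes \ho_\yy$, whereas $S_{Y_P}$ is defined via the evaluated maps landing in $V$. So the real content is a statement going the other direction: I must produce, from a subspace stable under $A, B, I$ tensored with $\ho_\yy$, a subspace stable under the evaluations $A_P, B_P, I_P$ at every point $P$.

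First I would observe that if $S \subset V$ satisfies $A(S), B(S), I(W) \subset S \otimes \ho_\yy$, then writing $A = \sum_k A_k \otimes z_k$ and intersecting with $S \otimes \ho_\yy$ forces each coordinate block $A_k$ to satisfy $A_k(S) \subset S$ (and likewise $B_k(S) \subset S$, $I_k(W) \subset S$), since the $z_k$ are linearly independent in $\ho_\yy$ and $S \otimes \ho_\yy = \bigoplus_k S \otimes z_k$ inside $V \otimes \ho_\yy = \bigoplus_k V \otimes z_k$. Consequently, for any $P \in \yy$, the evaluation $A_P = \mathrm{ev}_P^1(A)$ is (a scalar multiple of) $\sum_k z_k(P) A_k$, a linear combination of operators each preserving $S$, hence $A_P(S) \subset S$; the same holds for $B_P$ and $I_P$. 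Thus every subspace $S$ appearing in the intersection defining $S_Y$ also appears in the intersection defining $S_{Y_P}$.

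Since $S_{Y_P}$ is, by Definition~\ref{defsig}, the intersection of \emph{all} subspaces satisfying $A_P(S), B_P(S), I_P(W) \subset S$ --- a possibly larger collection than those defining $S_Y$ --- it is contained in the intersection over the smaller subfamily, which is exactly $S_Y$. This gives $S_{Y_P} \subset S_Y$ for every $P \in \yy$. For the final assertion: if $Y$ is locally stable, then $Y_P$ is stable for some $P \in \yy$, which by the remark following Definition~\ref{defsig} means $S_{Y_P} = V$; combined with $S_{Y_P} \subset S_Y \subset V$ this forces $S_Y = V$, i.e., $Y$ is stable.

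The main obstacle --- though it is more a point requiring care than a genuine difficulty --- is the first step: verifying that stability of $S$ under the $\ho_\yy$-twisted operators really does descend to stability under each coordinate block $A_k$. One must be slightly careful that $z_0, \ldots, z_d$ span a $(d+1)$-dimensional subspace $\ho_\yy \subset \ho^0(\oh_\yy(1))$ so that the direct sum decomposition $V \otimes \ho_\yy = \bigoplus_k V \otimes z_k$ is legitimate and the projection onto the $z_k$-component of $A(s)$ is exactly $A_k(s)$; once this is set up the argument is immediate. A secondary point is that $z_k(P)$ depends on a trivialization of $\oh_\yy(1)$ at $P$, but since we only care about whether $A_P(S) \subset S$, and this is unaffected by rescaling $A_P$, the ambiguity is harmless.
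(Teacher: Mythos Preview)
Your proof is correct and follows essentially the same route as the paper's. Both arguments hinge on the observation that $A(S)\subset S\otimes\ho_\yy$ is equivalent to $A_k(S)\subset S$ for every $k$, whence any evaluation $A_P$ preserves $S$; the paper states this as a full ``if and only if'' (using all $P\in\pn$ for the converse direction), while you prove only the implication actually needed, with a bit more care about why the direct-sum decomposition $V\otimes\ho_\yy=\bigoplus_k V\otimes z_k$ lets one read off each $A_k(S)\subset S$.
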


\begin{proof}
Let $S\subset V$ and consider $\yy\subset\pn$. Then $A(S)\subset S\otimes\ho_\yy$ if and only if $A_i(S)\subset S$ for $i=0,\ldots,d$, which holds if and only if  $\sum_{i=0}^{d}p_iA_i(S)\subset S$ for every $P=(p_0:\dots :p_d:a:b)\in\pn$. So applying the same to $B$ and $I$ we see that $A(S),B(S),I(W)\subset S\otimes\ho_\yy$ if and only if  $A_P(S),B_P(S),I_P(W)\subset S$ for every $P\in\pn$. The result follows.
\end{proof}

In \cite[Sec. 6.1]{HL} there is an example, with $c=2$, $r=1$ and $\yy=\mathbb{P}^3$, for which the converse of the above proposition does not hold, that is, there exists a stable $Y\in\aaa_{\mathbb{P}^3}$ such that $Y_{P}$ is not stable for every $P\in\mathbb{P}^3$. Equivalently, there exists $Y\in\aaa_{\mathbb{P}^3}$ such that $S_{Y_{P}}\subsetneqq S_{Y}=V$ for every $P\in\mathbb{P}^3$. So one may ask if, in general, at least $T_{Y}:=\sum_{P\in\yy} S_{Y_{P}}= S_{Y}$. If $c=1$ and $\yy$ is reduced, this trivially holds. If $c=2$, as in the example of \cite{HL}, and $\yy$ is integral, this holds as well. In fact, write $Y=(A,B,I)$ and note that for every $P\in\yy$ hold: $\iitp(W)\subset S_{Y_{P}}$ and, also,  $\iitp=0$ if and only if $S_{Y_{P}}=0$. Besides, $S_{Y}=0$ if and only if $\iit=0$. But the entries of $\iit$ are sections in $\ho^0(\oh_\yy(1))$ and $\yy$ is reduced, thus $S_{Y}=0$ if and only if $\iitp =0$ for every $P\in\yy$. Set $L_{Y}:=\sum_{P\in\yy}\iit_P(W)$. If $L_{Y}=0$ then $T_{Y}=S_{Y}=0$; if $L_{Y}=V$ then $T_{Y}=S_{Y}=V$; and if $\dim(L_{Y})=1$ then either exists $P\in \yy$ such that $\iitp(W)$ is not invariant by $\aatp$ or $\bbtp$ and then $S_{Y_{P}}=V$ which implies $T_{Y}=S_{Y}=V$, or, otherwise, $T_{Y}=S_{Y}=L_{Y}$ since $\yy$ is irreducible.

The above assumption on $\yy$ to be reduced for $c=1$ is necessary. In fact, take $\yy$ to be the nonreduced variety given by $z_0^2=0$ in $\mathbb{P}^2={\rm Proj}\, \cpx[z_0,x,y]$. Then $z_0\in \ho^0(\oh_\yy(1))$ is not the zero section but vanishes at every point of $\yy$. So if $r=1$ as well and $Y$ is such that $\iit=(z_0)$ then $S_{Y}$ cannot be zero. On the other hand, $\iitp=0$ for every $P\in\yy$, so $S_{Y_{P}}=0$ and hence $T_{Y}=0$.

The variety $\yy$ needs also to be irreducible for the case $c=2$. In fact, take $\yy$ to be the reducible variety given by $z_0z_1=0$ in $\mathbb{P}^3={\rm Proj}\, \cpx[z_0,z_1,x,y]$. Consider $Y=(\aat,\bbt,\iit)\in\aaa_{\yy}(1,2)$ where
$$
\aat =
\left( \begin{array}{cc}
z_0 &  z_1         \\
0 & z_0          \\
\end{array} \right)
\ \ \ \ \ \ \ \
I =
\left( \begin{array}{c}
z_0    \\
z_0
\end{array} \right)
$$
and $\bbt=0$. Let $P=(p_0:p_1:a:b)\in\yy$. If $p_0\neq 0$ then $\iitp(W)=N:=\langle (1,1)\rangle$ and $N$ is also $\aatp$-invariant because $p_1=0$; hence $S_{Y_{P}}=N$ in such a case. If $p_0=0$ then $S_{Y_{P}}=0$. Hence $T_{Y}=N$. On the other hand, if $N=S_{Y}$ then, in particular, $\aat(N)\subset N\otimes \ho_\yy$; but
$$
\aat(N)=\langle(z_0+z_1,z_0)\rangle\not\subset \langle (z_0,z_0),(z_1,z_1)\rangle=N\otimes \ho_\yy
$$
which implies that  $T_{Y}\subsetneqq S_{Y}$.

For $c=3$, we can have $T_{Y}\subsetneqq S_{Y}$ even when $\yy=\mathbb{P}^3={\rm Proj}\, \cpx[z_0,z_1,x,y]$. In fact, consider $Y=(\aat,\bbt,\iit)\in\aaa_{\mathbb{P}^3}(1,3)$ where
$$
\aat = \left( \begin{array}{ccc}
      z_1 &  z_1 &  0         \\
       0        &  z_1 &  z_0 \\
       0        &  0        &  2z_1
      \end{array} \right)
\ \ \ \ \ \ \ \
I = \left( \begin{array}{c}
                z_0   \\
                z_0 \\
              z_1
                \end{array} \right)
$$
and $\bbt=0$. Use \cite[Lem. 3.2.(i)]{JVM2} to get $S_{Y_{P}}=\sum_{i=0}^2 \aatp^i\iitp(W)$ and compute
$$
\aat\iit = \left( \begin{array}{c}
                2z_0z_1  \\
                2z_0z_1   \\
                2z_1^2
                \end{array} \right)
\ \ \ \ \ \ \ \
(\aat)^2\iit = \left( \begin{array}{c}
                4z_0z_1^2  \\
                4z_0z_1^2   \\
                4z_1^3
                \end{array} \right).
$$
Let $P=(p_0:p_1:a:b)\in\mathbb{P}^3$. If $p_0,p_1\neq 0$ then $S_{Y_{P}}=N:=\langle (1,1,0),(0,0,1)\rangle$.  If $p_0=0$  (resp. $p_1=0$) then $S_{Y_{P}}=\langle (0,0,1)\rangle$ (resp. $S_{Y_{P}}=\langle (1,1,0)\rangle$). Thus $T_{Y}=N$.  However, if $N=S_{Y}$ then, as seen above, $\aat(N)\subset N\otimes \ho_\yy$; but
\begin{align*}
\aat(N) &=\langle(2z_1,z_1,0),(0,z_0,2z_1)\rangle\\
& \not\subset \langle (z_0,z_0,0),(0,0,z_0),(z_1,z_1,0),(0,0,z_1)\rangle\\
&=N\otimes \ho_\yy
\end{align*}
which implies that  $T_{Y}\subsetneqq S_{Y}$.

We define $\mathbf{B}_\yy^{\rm st}$, $\mathbf{B}_\yy^{\rm lws}$, $\mathbf{B}_\yy^{\rm ls}$, $\mathbf{B}_\yy^{\rm gws}$, $\mathbf{B}_\yy^{\rm gs}$,  $\mathbf{B}_\yy^{\rm gwr}$ and $\mathbf{B}_\yy^{\rm gr}$  as the subsets of $\mathbf{B}_\yy$ consisting of stable, locally weak stable, locally stable, globally weak stable, globally stable, globally weak regular and globally regular ADHM data over $\yy$, respectively. Clearly, each of these sets are open subsets of $\mathbf{B}_\yy$ (in the Zariski topology), and one has strict inclusions
$$
\begin{matrix}
\mathbf{B}_\yy^{\rm gr} & \subset & \mathbf{B}_\yy^{\rm gs} & \subset & \mathbf{B}_\yy^{\rm ls} & &\\
\cap &  & \cap &  & \cap & &\\
\mathbf{B}_\yy^{\rm gwr} & \subset & \mathbf{B}_\yy^{\rm gws} & \subset & \mathbf{B}_\yy^{\rm lws} & \subset & \mathbf{B}_\yy^{\rm st}
\end{matrix}
$$


\subsection{The ADHM Variety}
\label{subadv}

One of the main goals of this paper is to consider, for data $X=((A,B,I),(A',B',J))\in\bb_\yy$, the \emph{generalized ADHM equation}
\begin{equation}
\label{c4}
AB'-BA'+IJ+(B'-B)\otimes x+(A-A')\otimes y=0
\end{equation}
which we also call the \emph{ADHM equation over} $\yy$.

Considering the map
\begin{gather*}
\begin{matrix}
\mu  : & \mathbf{B}_\yy & \longrightarrow &{\rm End}(V)\otimes \ho^0({\mathcal O}_\yy(2))   \\
                             & X & \longmapsto     & AB'-BA'+IJ+(B'-B)\otimes x+(A-A')\otimes y
\end{matrix}
\end{gather*}
we set the space of all solutions to the ADHM equation over $\yy$ as
$$
\calv_\yy=\calv_\yy(W,V)=\calv_\yy(r,c):=\mu^{-1}(0)
$$
which is an affine variety. We call it the \emph{ADHM variety over} $\yy$ of which we select the subvarieties
$$
\begin{matrix}
\calv_\yy^{\rm gr} & \subset & \calv_\yy^{\rm gs} & \subset & \calv_\yy^{\rm ls} & &\\
\cap &  & \cap &  & \cap & &\\
\calv_\yy^{\rm gwr} & \subset & \calv_\yy^{\rm gws} & \subset & \calv_\yy^{\rm lws} & \subset & \calv_\yy^{\rm st}
\end{matrix}
$$
consisting of globally regular, globally stable, locally stable, globally weak regular, globally weak stable, locally weak stable and stable points of $\calv_\yy$.

\begin{example}
\label{exappn}
\emph{The most relevant case is $\yy=\pn$. When so, note that if $X=((A,B,J),(A',B',I))\in\calv_\pn$ then necessarily $A=A'$ and $B=B'$. So one may write the datum as $X=(A,B,I,J)$ and the ADHM equation as
$$
[A,B]+IJ=0
$$
as extensively done in the literature. For instance, note that, for $\yy=\mathbb{P}^2$, (\ref{c4}) reduces to the most usual one taken in ${\rm End}(V)$. The case where $\yy=\mathbb{P}^3$ was considered in \cite{D1,FJ2} in the context of Yang-Mills theory and the Penrose correspondence. It can be written as follows:
$$
[ A_{0} , B_{0} ] + I_0J_0 = 0
$$
$$
[ A_{1} , B_{1} ] + I_1J_1 = 0
$$
$$
[ A_{0} , B_{1} ] + [ B_{0} , A_{1} ] + I_0J_1 + I_1J_0 = 0
$$
which were called {\em complex ADHM equations} in \cite[Eqs. 7-9]{FJ2}. More generally, in $\yy=\pn=\mathbb{P}^{d+2}$, (\ref{c4}) can be broken down into ${d+2}\choose{2}$ equations involving the linear maps $A_k$, $B_{k}$, $I_k$ and $J_k$:
$$
[A_{k}, B_{k} ]+I_kJ_k=0 \ \  \ \ \ \ \ \ \ \ \ \ \ \ \ \ \ \ \ \  \ \ \ \ \ \ k=0,\dots, d
$$
$$
[ A_{k} , B_{m} ]+ [ B_{k} , A_{m} ] + I_kJ_m + I_mJ_k = 0  \ \ \ \ \ \  k<m=0,\dots,d
$$
to which we refer here as the $d$-{\em dimensional ADHM equations}.}
\end{example}

\begin{example}
\label{exepxp}
\emph{Consider the case where $\yy={\rm S}\subset\mathbb{P}^3={\rm Proj}\, \cpx[z_0,z_1,x,y]$ is the scroll given by the equation
$$
z_0y-z_1x=0
$$
which is isomorphic to $\pum\times\pum$. It is easily found data in ${\rm S}$ which do satisfy the ADHM equation though not satisfying the conditions $A=A'$ and $B=B'$. In fact, let $X=((A,B,I),(A',B',J))\in\bb_{\rm S}$ be such that
$$
A=C\otimes z_0\ \ \ \ \ \ \ \ \ \ \ \ B=C\otimes z_1\ \ \ \ \ \ \ \ \ \ A'=B'=J=0
$$
where $C$ is a $c\times c$ matrix. Then
\begin{align*}
AB'-BA'+IJ+(B'-B)\otimes x+(A-A')\otimes y &=(-C\otimes z_1)\otimes x+(C\otimes z_0)\otimes y \\
                                           &=(z_0y-z_1x)C=0.
\end{align*}
The coordinates ADHM equations in this case are:
$$
A_{0}  B_{0}'-B_0A_0'+ I_0J_0 = 0
$$
$$
A_{1} B_{1}'-B_1A_1'+ I_1J_1 = 0
$$
$$
A_{0} B_{1}'- B_{0}A_{1}'+A_{1} B_{0}'- B_{1}A_{0}' + I_0J_1 + I_1J_0 = 0
$$
$$
A_{1}=A_1'\ \ \ \ \ \ \ \  B_0=B_0'\ \ \ \ \ \ \ \  A_0-A_0'=B_1-B_1'.
$$}
\end{example}

\begin{example}
\label{exequa}
\emph{Now consider the hyperquadric $\yy={\rm Q}\subset\mathbb{P}^4={\rm Proj}\, \cpx[z_0,z_1,z_2,x,y]$ given by the equation
$$
z_0y+z_1x+z_2^2=0.
$$
Let $X=((A,B,I),(A',B',J))\in\bb_{\rm Q}$ be such that
$$
A=C\otimes z_0\ \ \ \ \ B=-C\otimes z_1\ \ \ \ \ \ \ \ I=I'\otimes z_2\ \ \ \ \ \ \ A'=B'=0\ \ \ \ \ \ \ J=J'\otimes z_2
$$
where $C,I',J'$ are, respectively, $c\times c$, $c\times r$, $r\times c$ matrices such that $C=I'J'$. Then we have
\begin{align*}
AB'-BA'+IJ+(B'-B)\otimes x+(A-A')\otimes y &=0 \\
(I'\otimes z_2)\otimes(J'\otimes z_2)+(C\otimes z_1)\otimes x+(C\otimes z_0)\otimes y &=0 \\
                                             z_2^2(I'J')+(z_1x+z_2y)C &=0 \\
                                             (z_0y+z_1x+z_2^2)C &=0
\end{align*}
and so here on finds data in the ADHM variety not satisfying the conditions $A=A'$, $B=B'$ either. The coordinates ADHM equations in this case are:
$$
A_{0}  B_{0}'-B_0A_0'+ I_0J_0 = 0
$$
$$
A_{1} B_{1}'-B_1A_1'+ I_1J_1 = 0
$$
$$
A_{0} B_{1}'- B_{0}A_{1}'+A_{1} B_{0}'- B_{1}A_{0}' + I_0J_1 + I_1J_0 = 0
$$
$$
A_{0} B_{2}'- B_{0}A_{2}'+A_{2} B_{0}'- B_{2}A_{0}' + I_0J_2 + I_2J_0 = 0
$$
$$
A_{1} B_{2}'- B_{1}A_{2}'+A_{2} B_{1}'- B_{2}A_{1}' + I_1J_2 + I_2J_1 = 0
$$
$$
A_{1}=A_1'\ \ \ \ \ \ \ A_{2}=A_2'\ \ \ \  \ \ \ B_0=B_0'\ \ \ \ \ \ B_2=B_2'
$$
$$
A_{2}  B_{2}'-B_2A_2'+ I_2J_2 =A_0-A_0'=B_1'-B_1
$$}
\end{example}


\subsection{The Quotient ADHM Variety of Stable Points}
\label{subqav}

The importance of the ADHM variety will be clear in the next sections and has to do with the very aim of this work, that is, the construction of framed instanton sheaves by means of ADHM data. In fact,  we will see how to build such sheaves from points of $\calv_\yy$; how to establish a correspondence between points of $\calv_\yy^{\rm gws}$ and torsion free instanton sheaves and, also, how to even get a moduli space of instanton bundles on $\pn$ from points of $\calv_\pn^{\rm gr}$. In order to do so, we introduce an action of $G:=GL(V)$ on $\mathbf{B}_\yy$.

Given $g\in G$, $Y=(A,B,I)\in\aaa$ and $Y'=(A',B',J)\in\aaa'$ one defines
\begin{align*}
g\cdot Y &:= (gAg^{-1},gBg^{-1},gI) \\
g\cdot Y'&:= (gA'g^{-1},gB'g^{-1},Jg^{-1}).
\end{align*}
The action naturally extends to $\mathbf{B}_\yy$ as
$$
g\cdot(Y\otimes z,Y'\otimes w)=((g\cdot Y)\otimes z,(g\cdot Y')\otimes w)
$$
for any $Y\in\aaa$, $Y'\in\aaa'$ and $z,w\in \ho_{\yy}$.

\begin{proposition}
\label{Sh1}
If $X\in\mathbf{B}_\yy$ is stable, then its stabilizer subgroup $G_{X}$ is trivial.
\end{proposition}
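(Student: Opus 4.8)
The plan is to reduce the statement to the stability of the component $Y=(A,B,I)$ of $X$, using the $1$-eigenspace of a putative nontrivial stabilizer as a candidate destabilizing subspace.

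First I would take an arbitrary $g\in G_X$ and unwind the equation $g\cdot X=X$. Since the $G$-action affects $\aaa$ only by conjugation on the $\End(V)$-factors and $\Hom(W,V)$ only by left multiplication, $g\cdot X=X$ in particular forces $g\cdot Y=Y$, i.e.
$$
gAg^{-1}=A,\qquad gBg^{-1}=B,\qquad gI=I,
$$
with $A,B\in\Hom(V,V\otimes\ho_\yy)$ and $I\in\Hom(W,V\otimes\ho_\yy)$. Expanding in components $A=\sum_{k=0}^{d}A_k\otimes z_k$, $B=\sum_{k=0}^{d}B_k\otimes z_k$, $I=\sum_{k=0}^{d}I_k\otimes z_k$, these three identities are equivalent to
$$
gA_k=A_kg,\qquad gB_k=B_kg,\qquad gI_k=I_k\qquad(k=0,\dots,d),
$$
so that $g$ commutes with every $A_k$ and every $B_k$ and restricts to the identity on $\im I_k$ for each $k$.

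Next I would set $S:=\ker(g-\id_V)\subseteq V$ and check that $S$ satisfies the inclusions occurring in the definition of stability of $Y$. If $v\in S$ then $g(A_kv)=A_k(gv)=A_kv$, so $A_k(S)\subseteq S$ for all $k$, which is precisely $A(S)\subseteq S\otimes\ho_\yy$; the identical computation with $B$ gives $B(S)\subseteq S\otimes\ho_\yy$. Moreover $gI_k=I_k$ says $\im I_k\subseteq S$ for every $k$, hence $I(W)\subseteq S\otimes\ho_\yy$. Since $X$ stable means $Y$ stable, there is no \emph{proper} subspace of $V$ for which these three inclusions hold; as $S$ does satisfy them, $S$ cannot be proper, so $S=V$, i.e. $g-\id_V$ vanishes on all of $V$ and $g=\id_V$. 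Thus $G_X=\{\id\}$.

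I do not expect a genuine obstacle here; if I had to single out the step needing the most care, it is the componentwise translation of the group action into the commutation relations for the $A_k,B_k,I_k$ (and the matching condition $\im I_k\subseteq S$), together with the observation that one uses \emph{only} stability of $Y$: neither costability nor the datum $Y'$ enters, and one need not separately argue $S\neq\{0\}$, since the stability hypothesis already excludes every proper $S$, the case $S=\{0\}$ included.
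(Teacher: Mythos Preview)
Your argument is correct and is essentially the paper's own proof: both take $S=\ker(g-\id_V)$, verify $A(S),B(S),I(W)\subset S\otimes\ho_\yy$ from the stabilizer conditions, and invoke stability of $Y$ to force $S=V$. The only cosmetic difference is that the paper phrases it contrapositively (start with $g\neq\id_V$, note $I\neq0$ so $S$ is proper, derive a contradiction), while you argue directly that $S=V$; your remark that the case $S=\{0\}$ needs no separate treatment is a nice clarification.
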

\begin{proof}
Write $X=(Y,Y')$ with $Y=(A,B,I)$. If $G_{X}$ is nontrivial, take $g\neq\mathbf{1}_V$ in $G_{X}$. Since $I\neq 0$ because $X$ is stable and also $gI=I$, then $S:=\ker (g-\mathbf{1}_V)$ is proper and $A(S),B(S),I(W)\subset S\otimes\ho_{\yy}$ which contradicts the stability of $X$. Thus $G_X$ is trivial.
\end{proof}

The action of $G$ on $\yy$ naturally restricts to $\calv_\yy$, and induces an action of $G$ on $\Gamma(\calv_\yy)$, the ring of regular functions of $\calv_\yy$. Fix $l>0$ and consider the group homomorphism $\chi:G\to\cpx^*$ given by $\chi(g)=(\det g)^l$. This can be used to lift the action of $G$ from $\calv_\yy$ to $\calv_\yy\times\cpx$ as follows
\begin{equation}
g\cdot (\xxt,z) := (g\cdot\xxt,\chi(g)^{-1}z)
\end{equation}
for any $\xxt\in\calv_\yy$ and $z\in\cpx$. Then one can form the variety
$$ \calv_\yy /\!/_{\chi}\,G :=
{\rm Proj}\left( \bigoplus\limits_{n\geq0} \Gamma(\calv_\yy)^{G,\chi^n} \right)$$
where
$$
\Gamma(\calv_\yy)^{G,\chi^n} := \left\{ f\in \Gamma(\calv_\yy) ~|~ f(g\cdot \xxt)= \chi(g)^n f(\xxt) ~\forall g\in G \right\}
$$
Clearly, $\calv_\yy /\!/_{\chi} G$ is projective over ${\rm Spec}\left(\Gamma(\calv_\yy)^{G}\right)$,
and it is quasi-projective over $\cpx$. The GIT tells us that $\calv_\yy /\!/_{\chi} G$ is
the space of orbits $G\cdot\xxt\subset\calv_\yy$ such that the lifted orbit
$G\cdot(\xxt,z)$ is closed within $\calv_\yy\times\cpx\setminus\{0\}$ for all $z\ne0$.


\begin{proposition}
\label{closed-stable-d}
The orbit $G\cdot(\xxt,z)$ is closed for $z\ne0$ if and only if
$\xxt$ is stable.
\end{proposition}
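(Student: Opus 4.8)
The plan is to run the standard Hilbert--Mumford analysis for affine GIT quotients, in the spirit of Nakajima's treatment of the $\mathbb{P}^2$ case. Since $\calv_\yy\times\cpx$ is affine and $G=GL(V)$ is reductive, the orbit $G\cdot(\xxt,z)$ is closed if and only if for every one--parameter subgroup $\lambda\colon\cpx^*\to G$ the limit $\lim_{t\to0}\lambda(t)\cdot(\xxt,z)$, whenever it exists, already lies in $G\cdot(\xxt,z)$ (the Kempf--Rousseau form of the criterion). So everything reduces to a weight computation: for a given $\lambda$, decompose $V=\bigoplus_{\alpha\in\Z}V_\alpha$ with $\lambda(t)$ acting on $V_\alpha$ by $t^\alpha$, and set $V_{\ge k}=\bigoplus_{\alpha\ge k}V_\alpha$. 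Writing $\xxt=(Y,Y')$ with $Y=(A,B,I)$ and $Y'=(A',B',J)$, one checks that $\lim_{t\to0}\lambda(t)\cdot\xxt$ exists in $\bb_\yy$ exactly when $A,B,A',B'$ all carry $V_{\ge k}$ into $V_{\ge k}\otimes\ho_\yy$ for every $k$, $I(W)\subset V_{\ge0}\otimes\ho_\yy$, and $J|_{V_{\ge1}}=0$; and, since $\chi(\lambda(t))^{-1}z=t^{-l\sum_\alpha\alpha\dim V_\alpha}z$, the limit of the $\cpx$--factor exists for $z\ne0$ iff $\sum_\alpha\alpha\dim V_\alpha\le0$.

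For the implication ``$\xxt$ stable $\Rightarrow$ orbit closed'', let $\lambda$ have an existing limit. Then $V_{\ge0}$ satisfies $A(V_{\ge0}),B(V_{\ge0}),I(W)\subset V_{\ge0}\otimes\ho_\yy$, so stability of $Y$ forces $V_{\ge0}=V$; hence every weight is $\ge0$ and $\sum_\alpha\alpha\dim V_\alpha\ge0$. Combined with the constraint $\sum_\alpha\alpha\dim V_\alpha\le0$ coming from $z\ne0$ this gives $\sum_\alpha\alpha\dim V_\alpha=0$, so $V_\alpha=0$ for $\alpha\ne0$, $\lambda$ is trivial, and the limit is $(\xxt,z)$ itself. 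Thus $G\cdot(\xxt,z)$ is closed.

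For the converse I would argue contrapositively. Suppose $\xxt$ is not stable and pick a proper subspace $S\subsetneq V$ with $A(S),B(S),I(W)\subset S\otimes\ho_\yy$; the key point is that, using $\xxt\in\calv_\yy$, one can choose $S$ so that it is also invariant under $A'$ and $B'$ --- for $\yy=\pn$ this is automatic, since $A=A'$ and $B=B'$ on $\calv_\pn$ (Example~\ref{exappn}), and in general it is extracted from the ADHM equation~(\ref{c4}) by restricting~(\ref{c4}) to $S$ and projecting modulo $S$. Fixing a complement $V=S\oplus S'$ with $S'\ne0$ and taking $\lambda(t)=\mathbf{1}_S\oplus t^{-1}\mathbf{1}_{S'}$, all of $A,B,A',B',I,J$ then have limits, whereas $\chi(\lambda(t))^{-1}z=t^{l\dim S'}z\to0$. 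Hence $\lim_{t\to0}\lambda(t)\cdot(\xxt,z)$ exists and has zero last coordinate, so it cannot lie in $G\cdot(\xxt,z)$, whose points all have nonzero last coordinate as $z\ne0$; therefore the orbit is not closed.

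The only real obstacle is the step just highlighted: the stability condition constrains only $A,B,I$, while the existence of a degenerating one--parameter limit also requires control of the $A',B',J$ components. I expect this to be the heart of the matter for general $\yy$ --- one must show that the ADHM equation~(\ref{c4}) propagates the relevant invariance from $(A,B,I)$ to $(A',B')$ --- the remaining ingredients (reduction to one--parameter subgroups, the weight bookkeeping, and the trivial-stabilizer observation of Proposition~\ref{Sh1}) being routine.
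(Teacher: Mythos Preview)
Your approach is the same as the paper's: both directions are run through one--parameter subgroups and weight decompositions, and the converse (``not stable $\Rightarrow$ orbit not closed'') is argued by picking a destabilizing subspace $S$, setting $\lambda(t)=\id_S\oplus t^{-1}\id_T$, and producing a limit whose $\cpx$--coordinate is $0$.

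The concern you isolate in your last paragraph --- that for $\lim_{t\to0}\lambda(t)\cdot X$ to exist one also needs $A',B'$ to preserve $S$, not just $A,B$ --- is precisely the crux, and it is worth knowing that the paper's own proof does \emph{not} address it: the paper writes only $A,B,I$ in block form and then asserts ``It follows that $L=\lim_{t\to0}\lambda(t)\cdot X$ exists'' without mentioning $A',B',J$ at all. So your write--up is, if anything, more honest than the original on this point. For $\yy=\pn$ there is no issue, because on $\calv_{\pn}$ one automatically has $A=A'$ and $B=B'$ (Example~\ref{exappn}), and the argument goes through verbatim.

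For general $\yy$, however, your proposed fix (use the ADHM equation~(\ref{c4}) to propagate invariance from $(A,B,I)$ to $(A',B')$) does not work. Restricting the $(2,1)$--block of~(\ref{c4}) to a splitting $V=S\oplus T$ with $A,B$ upper--triangular and $I$ supported in $S$ gives only $(a_{22}+\id\otimes x)\,b'_{21}=(b_{22}+\id\otimes y)\,a'_{21}$, which does not force $a'_{21}=b'_{21}=0$. Concretely, on the scroll $\yy=\{z_0y=z_1x\}\subset\p3$ of Example~\ref{exepxp}, take $V=\cpx^2$, $W=\cpx$ and
\[
X=\bigl((0,0,\,e_1\otimes z_0),\,(E_{21}\otimes z_0,\,E_{21}\otimes z_1,\,0)\bigr),
\]
where $E_{21}e_1=e_2$, $E_{21}e_2=0$. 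Then $\mu(X)=E_{21}(z_1x-z_0y)=0$ on $\yy$, so $X\in\calv_\yy$; the subspace $S=\langle e_1\rangle$ shows $X$ is not stable; yet any $V_{\ge0}$ arising from a one--parameter subgroup with existing limit must be $E_{21}$--invariant and contain $e_1$, hence equal to $V$, forcing $\lambda$ to be trivial. Thus no one--parameter subgroup degenerates $(X,z)$ out of its orbit, and $G\cdot(X,z)$ is closed. In other words, the converse implication --- in both your proposal and the paper's proof --- has a genuine gap for arbitrary $\yy$, and the example above suggests the statement itself may fail outside the case $A=A'$, $B=B'$.
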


\begin{proof}
The usual proof of the case $\yy=\mathbb{P}^2$ generalizes to the current framework. Take $X=(Y,Y')$ with $Y=(\aat,\bbt,\iit)$ and, first, assume that the orbit $G\cdot(\xxt,z)$ is not closed. Then there is a nontrivial one parameter subgroup $\lambda:\cpx^*\to G$ such that the limit
\begin{equation}\label{lim1}
(L,w) = \lim_{t\to0} \lambda(t)\cdot (X,z)
\end{equation}
exists but does not belong to the orbit $G\cdot(X,z)$.

Take a weight decomposition of $V$ with respect to $\lambda$, so that $V = \oplus_m V(m)$. The existence of the limit implies that
$$
\aat (V(m)), \bbt (V(m)) \subset \left( \oplus_{n\ge m} V(n) \right)\otimes\ho_\yy,
$$
$$
\iit(W) \subset \left( \oplus_{n\ge 0} V(n) \right)\otimes\ho_\yy.
$$
Set $S=\oplus_{n\ge 0} V(n)$, so that $\aat(S),\bbt(S),\iit(W)\subset S\otimes \ho_\yy$. We claim that $S$ is a proper subspace of $V$, which implies that $X$ is not stable. Indeed, the existence of the limit (\ref{lim1}) implies that $\det\lambda(t)=t^N$
for some $N\leq0$. If $N=0$, then actually $\lambda(t)=\id_V$ and $V=V(0)$, which
contradicts the fact that the limit (\ref{lim1}) does not belong to the orbit
$G\cdot(X,z)$. Hence $N<0$, which implies that $S$ is proper, as desired.

Conversely, assume that $X$ is not stable. Then there exists a proper subspace $S\subset V$ such that $A(S),B(S),I(W) \subset S\otimes\ho_\yy$. Taking any subspace $T\subset V$ such that $V=S\oplus T$, the maps $\aat,\bbt$
and $\iit$ may be written, with respect to this decomposition, as follows
$$A,B = \left( \begin{array}{cc} \star & \star \\ 0 & \star \end{array} \right)
~~ {\rm and} ~~
I = \left( \begin{array}{c} \star \\ 0 \end{array} \right). $$
Defining the $1$-parameter subgroup $\lambda:\cpx^*\to G$ as
$$ \lambda(t) = \left( \begin{array}{cc} \id_S & 0 \\ 0 & t^{-1}\id_T \end{array} \right),$$
note that
$$ \lambda(t)A\lambda(t)^{-1},
\lambda(t)B\lambda(t)^{-1} =
\left( \begin{array}{cc} \star & t\cdot\star \\ 0 & \star \end{array} \right)
\ \ {\rm and} \ \
\lambda(t)I = I. $$
It follows that $L=\lim_{t\to0} \lambda(t)\cdot X$ exists. Thus
$$\lim_{t\to0} \lambda(t)\cdot (X,z) = (L,0),$$
which means that the orbit $G\cdot(X,z)$ is not closed within
$\calv_\yy\times\cpx\setminus\{0\}$.
\end{proof}

From the above proposition,  we are able to introduce a variety which will play a central role in Section \ref{secmod}, namely, the \emph{quotient ADHM variety of stable points over} $\yy$, defined as
$$ {\mathcal M}_\yy^{\rm st}={\mathcal M}_\yy^{\rm st}(r,c) :=\left. \left\{ \begin{array}{c} {\rm stable~solutions~of~the} \\ {\rm ADHM~equation}
\end{array} \right\} \right/ G\simeq \calv_\yy /\!/_{\chi} G. $$

So ${\mathcal M}_\yy^{\rm st}=\calv_\yy^{\rm st}/G $ is a quasiprojective variety. Note that we may consider the following sequence of varities
$$
\begin{matrix}
\calm_\yy^{\rm gr} & \subset & \calm_\yy^{\rm gs} & \subset & \calm_\yy^{\rm ls} & &\\
\cap &  & \cap &  & \cap & &\\
\calm_\yy^{\rm gwr} & \subset & \calm_\yy^{\rm gws} & \subset & \calm_\yy^{\rm lws} & \subset & \calm_\yy^{\rm st}
\end{matrix}
$$
consisting of globally regular, globally stable, locally stable, globally weak regular, globally weak stable, locally weak stable and stable orbits of $\calm_\yy^{\rm st}$.  The next natural questions are to determine whether $\calms_M$ is irreducible and nonsingular, and to compute its dimension . If $\calms_\yy$ is not irreducible, one would like to characterize and count its irreducible components. If $\calms_\yy$ is singular, one would like to characterize the singularity locus. We ask some of these questions for the case of projective spaces in Section \ref{secmod}. Notice that since $G$ acts freely and properly on $\calv^{\rm st}_\yy$, it follows that $\calms_\yy$ is irreducible/nonsingular if and only if $\calv^{\rm st}_\yy$ is irreducible/nonsingular, and that
$$ \dim \calms_\yy(r,c) = \dim \calv^{\rm st}_\yy(r,c) - c^2. $$

\begin{remark}\label{GIT}
\emph{Any stable solution of the ADHM equation is GIT stable by Propositions \ref{Sh1} and \ref{closed-stable-d} (see also \cite[Prp. 4.3]{HL}). Then ${\mathcal M}_\yy^{\rm st}$ is a good categorical quotient since the group $G$ is reductive \cite[Thm. 1.10]{mumford}; in particular $\calm_\yy^{\rm gws}$ is also a good categorical quotient.}
\end{remark}


\section{Torsion free instanton sheaves}
\label{secfra}

Our aim here is to establish a correspondence between torsion free instanton sheaves of trivial splitting type on $\yy$ and globally stable solutions of the ADHM equation over $\yy$.


\subsection{The ADHM construction}

In this subsection we will see how to construct coherent sheaves on $\yy$ which restrict trivially to $\ell$, out of ADHM data over $\yy$. To begin with, for any datum $X=((A,B,I),(A',B',J))\in\bb_\yy$, consider the sequence of sheaf maps
\begin{equation}
\label{monad.pn}
E^\bullet_{X} ~:~ V\otimes\oh_\yy(-1) \stackrel{\alpha}{\longrightarrow} (V\oplus V\oplus W)\otimes\oh_\yy\stackrel{\beta}{\longrightarrow} V\otimes\oh_\yy(1)
\end{equation}
given by
\begin{equation}
\label{alpha}
\alpha = \left( \begin{array}{c}
A' + \id\otimes x \\ B' + \id\otimes y \\ J
\end{array} \right) \ \ \ \
\beta = \left( \begin{array}{ccc}
-B - \id\otimes y ~~ & ~~ A + \id\otimes x ~~ & ~~ I
\end{array} \right).
\end{equation}
Given a point $P\in\yy$, we will denote by $\alpha_P$ and $\beta_P$ the fiber maps.

Note that $\beta\alpha=0$ iff $X$ satisfies the ADHM equation, which is a straightforward calculation left to the reader. Therefore, for such an $X$, we are able to do the following definition.

\begin{definition}
For any $X\in\calv_\yy$, we call $E^\bullet_{X}$ the \emph{ADHM complex over $\yy$ associated to $X$} and we refer to $E:=\ker\beta/{\rm im}\,\alpha$ as the \emph{cohomology sheaf of $E^\bullet_{X}$}.
\end{definition}

It is also important to check when the ADHM complex $E^\bullet_X$ happens to be a \emph{monad}, that is, when $\alpha$ is injective and $\beta$ surjective. We will see below that global weak stability is precisely the property required. Before that, let us just introduce some notation. Let $X$ be an ADHM datum; we denote by
$$
D_X:=\{ P\in\yy\ |\ \alpha_P\ \text{is not injective}\},
$$
which is the \emph{degeneration locus} of the map $\alpha$, and by
$$
d_X:=\codim_{\yy}(D_X)
$$
its codimension respect to $\yy$. We call $X\in\calv_{\yy}$ a \emph{nondegenerated} datum if $d_X\geq 2$, and say it is \emph{degenerated} otherwise. The set of nondegenerated points of the ADHM variety plays a central role in this work and is denoted by $\calv_{\yy}^{\bar{\rm n}}$.

\begin{remark}
\label{remdeg}
\emph{Note that for $n\geq 2$, one always has $\calv_{\pn}^{\bar{\rm n}}=\calv_{\pn}$, that is, there are no degenerated data over these projective spaces. More generally, if $\yy$ has dimension at least $2$ and ${\rm Pic}(\yy)=\mathbb{Z}$ then $\calv_{\yy}^{\bar{\rm n}}=\calv_{\yy}$ as well. On the other hand, for instance, Example \ref{exepxp} do provide the existence of degenerated points in the ADHM variety: if $X=((C\otimes z_0,C\otimes z_1,I),(0,0,0))\in\calv_{\rm S}$ where ${\rm S}$ is the scroll $z_0y-z_1x=0$ in $\p3$, then the degeneration locus of $X$ is the line $x=y=0$, and hence $d_X=1$.}
\end{remark}

\begin{proposition}
\label{l1}
The following hold:
\begin{enumerate}
\item[(i)] $\alpha$ is injective;
\item[(ii)] $\alpha_P$ is injective for every $P\in\yy$ iff $X$ is globally weak costable;
\item[(iii)] $\beta$ is surjective iff $X$ is globally weak stable.
\end{enumerate}
\end{proposition}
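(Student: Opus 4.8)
The plan is to analyze the maps $\alpha$ and $\beta$ fiber by fiber, since injectivity of a map of locally free sheaves and surjectivity (away from a locus of the right codimension) are detected on fibers. The three items are really three pointwise statements dressed up sheaf-theoretically, so I would reduce each to a linear-algebra assertion about $\alpha_P$ or $\beta_P$ and then invoke the definitions of weak (co)stability at $P$ together with the global quantifiers. For item (i), I would show directly that $\alpha$ is injective as a sheaf map: the composite of $\alpha$ with the projection onto the first $V$-summand is $A' + \id\otimes x$, and the composite with the second is $B' + \id\otimes y$. Restricting to the line $\ell$ (where $z_0 = \cdots = z_d = 0$, so $A'_P = B'_P = J_P = 0$), $\alpha_P$ becomes $(\id\otimes x, \id\otimes y, 0)^{t}$, which is manifestly injective at every point of $\ell$ since $x,y$ do not simultaneously vanish there. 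Off $\ell$ one still has $x$ or $y$ nonzero, and a short argument (as in the $\p2,\p3$ cases) shows the kernel of $\alpha$ is a subsheaf of $V\otimes\oh_\yy(-1)$ supported in codimension $\geq 1$, hence zero since $V\otimes\oh_\yy(-1)$ is torsion free; actually the cleanest route is: $\ker\alpha$ is torsion free (subsheaf of a torsion-free sheaf), and it vanishes on the dense open set where $\alpha_P$ is injective, therefore it is zero.

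For item (ii), I would unwind what $\ker\alpha_P \neq 0$ means. Suppose $0\neq v\in V$ lies in $\ker\alpha_P$, i.e. $A'_P(v) + x(P)v = 0$, $B'_P(v) + y(P)v = 0$, and $J_P(v) = 0$. After choosing a trivialization at $P$ one normalizes and sees that the line $S = \langle v\rangle$ satisfies $A'_P(S), B'_P(S) \subset S$ and $S\subset\ker J_P$ — precisely the condition that $Y'_P$ fails to be weak costable (Definition, item (iv)). Conversely, if $Y'_P$ is not weak costable, the $1$-dimensional invariant subspace $S\subset\ker J_P$ supplies, after adjusting by scalars coming from the common eigenvalue business of the restrictions of $A'_P$ and $B'_P$ to $S$, an element of $\ker\alpha_P$; one needs to check the eigenvalues are consistent with $-x(P)$ and $-y(P)$, but in fact one simply replaces the given invariant line by the appropriate simultaneous eigenline (restrictions of commuting-enough maps to a line are automatically scalars, so $A'_P|_S$ and $B'_P|_S$ are scalars, and these scalars are forced to be $-x(P)$ and $-y(P)$ only in one direction — so the careful statement is that $\alpha_P$ non-injective is equivalent to the existence of such an $S$ with the eigenvalues matching, which is exactly the negation of weak costability once one accounts for the $\id\otimes x, \id\otimes y$ shift; I would spell this bookkeeping out). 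Quantifying over all $P\in\yy$ gives the stated equivalence with \emph{global} weak costability. Item (iii) is dual: $\beta_P$ fails to be surjective iff its transpose $\beta_P^{t}\colon V^{*}\to (V\oplus V\oplus W)^{*}\otimes\oh$ has nonzero kernel, and cokernel vanishing is again checked fiberwise with $\beta$ surjective off a proper closed set automatically (the sheaf image is a subsheaf of the line-bundle-twisted $V$, and surjectivity holds generically). Writing $\beta_P^{t} = (-B_P - y(P)\id,\ A_P + x(P)\id,\ I_P)^{t}$ acting suitably, the condition $\mathrm{coker}\,\beta_P\neq 0$ translates — by taking annihilators — into the existence of a codimension-$1$ subspace $S\subset V$ with $A_P(S), B_P(S), I_P(W)\subset S$ (with the $x(P), y(P)$ shifts absorbed exactly as before), i.e. the failure of weak stability of $Y_P$. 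Quantifying over $P$ yields (iii).

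The main obstacle, and the step I would be most careful about, is the bookkeeping with the scalar shifts $\id\otimes x$ and $\id\otimes y$ inside $\alpha$ and $\beta$: the "invariant subspace" in the definitions of weak stability/costability is invariant for $A_P, B_P$ literally, whereas the kernel/cokernel of $\alpha_P/\beta_P$ sees $A_P + x(P)\id$ etc. One must verify that a $1$-dimensional (resp. codimension-$1$) subspace is invariant for $A_P$ iff it is invariant for $A_P + x(P)\id$ — which is obvious since they differ by a scalar — and then track that the specific eigenvalue condition coming from $\alpha_P v = 0$ is automatically compatible, because on a line any endomorphism acts as a scalar and that scalar is whatever it is; the point is that the \emph{existence} of the bad subspace, not a constraint on eigenvalues, is what both sides assert. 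I would also need the standard fact that for a morphism of locally free sheaves on an integral (or at least irreducible reduced) scheme, generic injectivity implies injectivity, and generic surjectivity onto a locally free sheaf implies the cokernel is a torsion sheaf — these let me pass from the fiberwise statements to the sheaf statements in (i) and (iii). None of this is deep, but getting the quantifiers and the $x,y$-shifts exactly right is where an error would creep in.
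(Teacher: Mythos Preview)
Your argument for (i) matches the paper's: $\alpha_P$ is visibly injective along $\ell$, so $\ker\alpha$ is a torsion-free sheaf vanishing on a dense open, hence zero. Your duality reduction of (iii) to (ii) via transposes is also exactly the paper's route.

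The gap is in (ii), precisely at the eigenvalue bookkeeping you flag and then talk yourself out of. At a \emph{fixed} point $P$, the two conditions are \emph{not} equivalent. Non-injectivity of $\alpha_P$ says there is $v \neq 0$ with $A'_P v = -x(P)\,v$, $B'_P v = -y(P)\,v$, $J_P v = 0$; failure of weak costability at $P$ only asks for a line $S \subset \ker J_P$ invariant under $A'_P$ and $B'_P$, with \emph{no} constraint on the eigenvalues. So ``$\alpha_P$ not injective $\Rightarrow$ $Y'_P$ not weak costable'' is immediate, but the converse is not: the scalars $a,b$ by which $A'_P$, $B'_P$ act on $S$ are specific numbers with no a priori relation to $-x(P)$, $-y(P)$. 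Your closing claim that ``the existence of the bad subspace, not a constraint on eigenvalues, is what both sides assert'' is false for the $\alpha_P$ side, and ``replacing the invariant line by an eigenline'' buys nothing since on a one-dimensional $S$ the restriction is already a scalar.

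The idea you are missing, and which the paper uses, is that $A'_P$, $B'_P$, $J_P$ depend only on $z_0(P),\dots,z_d(P)$ (since $A' = \sum_i A'_i \otimes z_i$, etc.), so weak costability of $Y'_P$ is completely insensitive to the last two coordinates $x(P)$, $y(P)$. Hence, given a bad line at $P = (p_0:\cdots:p_d:*:*)$ with eigenvalues $a,b$, one replaces $P$ by the point with the same $z$-coordinates but last two coordinates chosen to match $a,b$; at that new point the eigenvalue constraint is satisfied and $\alpha$ acquires a kernel there. This is why the equivalence in (ii) is stated \emph{globally} (quantified over all $P\in\yy$) rather than pointwise. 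The same trick, dualized, is what makes (iii) work.
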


\begin{proof}
It is easy to see that $\alpha_P$ is injective for all $P\in\ell$. This means that the localized map $\alpha_P$ may fail to be injective
only at a subvariety of $\yy$ which does not intersect $\ell$, and hence $d_X\geq 1$. In particular, $\alpha$ is injective as a sheaf map and (i) follows.

To check (ii) and (iii), write $X=((A,B,I),(A',B',J))$. For (ii), if $X_P$ is not weak costable for some $P\in\yy$, there is a one dimensional subspace $S\subset V$ such that $A'_P(S),B'_P(S)\subset S\subset\ker J_P$. Fixing coordinates for $P$, we may assume that $(A'_P,B'_P,J_P)$ lies in $\aaa'$. It follows that there exists a nonzero $v\in V$, and $a,b\in\cpx$ such that
\begin{equation}
\label{equabi}
\begin{array}{lcr}
A'_P(v)=av\ \ \ & B'_P(v)= bv &\ \ \ J_P(v)=0
\end{array}
\end{equation}
and since the last coordinates of $P$ does not affect costability, we may suppose $P=(p_0:\dots:p_d:a:b)$ and hence the fiber map $\alpha_P$ is not injective. Conversely, if for such a
$P\in\yy$ the map $\alpha_P$ is not injective, then (\ref{equabi}) holds for a nonzero $v\in V$ and thus $S=\langle v\rangle$ makes $X_P$ non weak costable.

Finally, $\beta$ is surjective iff $\beta_P$ is surjective for all $P\in\yy$, which holds iff $\beta^t_P$ is injective for all $P\in\yy$, which, by the prior item, holds iff $(B_P^t,A_P^t,I_P^t)$ is weak costable for all $P\in\yy$. Now it is easily seen that $(B_P^t,A_P^t,I_P^t)$ is weak costable iff $(A_P,B_P,I_P)$ is weak stable and (iii) holds.
\end{proof}

For the next result, we recall some definitions. A projective scheme is called \emph{arithmetically Cohen-Macaulay} (ACM) if its homogeneous coordinate ring is a Cohen-Macaulay ring. The \emph{charge} of a coherent sheaf $E$ on a projective scheme is the integer $h^1(E(-1))$.

\begin{proposition}
\label{l4}
Let $X\in\calv_\yy(r,c)$ and $E$ be the cohomology sheaf of the ADHM complex $E^\bullet_{X}$. Then the following hold:
\begin{itemize}
\item[(i)] $E$ is a coherent sheaf which restricts trivially to $\ell$;
\item[(ii)] $E$ is a torsion free sheaf if and only if $X$ is nondegenerated;
\item[(iii)] $E^\bullet_{X}$ is a monad if and only if $X$ is globally weak stable;
\item[(iv)] $E$ is a locally free sheaf if and only if $X$ is globally weak costable;
\item[(v)] If $X$ is nondegenerated globally weak stable and $\yy$ is either $\p2$ or ACM of dimension at least $3$, then $E$ is a torsion free sheaf of rank $r$ and charge $c$.
\end{itemize}
\end{proposition}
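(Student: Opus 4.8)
The plan is to prove items (i)--(v) of Proposition \ref{l4} in sequence, leveraging Propositions \ref{l1} and \ref{prpst1} together with standard monad/cohomology arguments.

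For (i), since $\alpha$ is injective as a sheaf map by Proposition \ref{l1}(i), the cohomology sheaf $E=\ker\beta/\im\alpha$ is a coherent $\oh_\yy$-module. To see it restricts trivially to $\ell$, note that $A,B,I,A',B',J$ all vanish along $\ell$ because their entries lie in $\ho_\yy=\langle z_0,\ldots,z_d\rangle$, which cuts out $\ell$. Hence over $\ell$ the maps $\alpha|_\ell$ and $\beta|_\ell$ reduce to the constant maps built only from $\id\otimes x$ and $\id\otimes y$; restricting $E^\bullet_X$ to $\ell\cong\pum$ gives the Euler-type monad $V\otimes\oh_\ell(-1)\to(V\oplus V\oplus W)\otimes\oh_\ell\to V\otimes\oh_\ell(1)$ whose cohomology is $W\otimes\oh_\ell$. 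I would record this as a short direct computation: $\alpha|_\ell=\binom{\id\otimes x}{\id\otimes y}{0}$ and $\beta|_\ell=(-\id\otimes y,\ \id\otimes x,\ 0)$, so $\ker\beta|_\ell = V\otimes\oh_\ell(-1)\cdot(\text{diag}) \oplus W\otimes\oh_\ell$ and the quotient by $\im\alpha|_\ell$ is $W\otimes\oh_\ell\cong\oh_\ell^{\oplus r}$.

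For (ii), the point is that $E$ is torsion free iff it injects into its double dual, and the obstruction to torsion-freeness of the cohomology of a complex of locally free sheaves with $\alpha$ injective is concentrated along the locus where $\alpha$ drops rank, i.e. $D_X$. If $d_X\geq 2$ then $\alpha$ is a subbundle morphism in codimension $1$, so $\operatorname{coker}\alpha$ is torsion free on the complement of a codimension-$2$ set, whence the subsheaf $E=\ker\beta/\im\alpha$ of $\operatorname{coker}\alpha$ is torsion free. Conversely if $d_X=1$, the cokernel of $\alpha$ acquires torsion supported on the divisor $D_X$, and one checks this torsion survives in $E$. For (iii), this is immediate from Proposition \ref{l1}: $E^\bullet_X$ is a monad iff $\alpha$ is injective (always true) and $\beta$ is surjective, and the latter is equivalent to global weak stability. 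Similarly (iv): $E$ is locally free iff the cohomology of the complex of vector bundles is locally free, which by the standard fiberwise criterion happens iff $\alpha_P$ is injective for every $P$ (so that $\beta_P$ automatically has constant rank given $\beta\alpha=0$); by Proposition \ref{l1}(ii) this is exactly global weak costability.

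For (v), assume $X$ is nondegenerated and globally weak stable; then by (ii) and (iii), $E$ is torsion free and sits in a monad. The rank is computed from the alternating sum of ranks: $\operatorname{rk}E = 2c+r-c-c=r$. For the charge $h^1(E(-1))=c$, I would break the monad into the two short exact sequences $0\to V\otimes\oh_\yy(-1)\to K\to E\to 0$ and $0\to K\to(V\oplus V\oplus W)\otimes\oh_\yy\to V\otimes\oh_\yy(1)\to 0$ where $K=\ker\beta$, twist by $\oh_\yy(-1)$, and chase cohomology. Here is where the hypothesis on $\yy$ enters: for $\yy=\p2$ one uses Bott vanishing directly; for $\yy$ ACM of dimension $\geq 3$, the ACM condition gives $h^i(\oh_\yy(t))=0$ for $0<i<\dim\yy$ and all $t$, which kills the intermediate cohomology of the line-bundle terms. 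From the second sequence twisted by $\oh_\yy(-1)$ one extracts $h^0(K(-1))=0$ and $h^1(K(-1))$ sits in an exact sequence with $\ho^0(V\otimes\oh_\yy)=V$ and $\ho^0(V\otimes\oh_\yy(1)) = V\otimes\ho_\yy$; combined with the first sequence twisted by $\oh_\yy(-1)$ one gets $h^1(E(-1)) = h^1(K(-1)) = \dim V = c$. The main obstacle I anticipate is (ii): pinning down the ``if and only if'' rigorously, especially producing the torsion when $d_X=1$, requires a careful local argument comparing the sheaf-theoretic kernel/cokernel with the scheme structure of the degeneration locus $D_X$ — one must verify that the torsion of $\operatorname{coker}\alpha$ along $D_X$ is not entirely absorbed into $\im\beta$ when passing to $E=\ker\beta/\im\alpha$, which uses that $\beta$ is (generically) surjective and a dimension count on the fiber of $E$ over the generic point of $D_X$.
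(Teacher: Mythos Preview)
Your overall strategy matches the paper's: items (i) and (iii) are handled exactly as the authors do (restrict to $\ell$ and compute; invoke Proposition~\ref{l1}). For (v) the paper simply cites \cite[Prp.~3.2]{JVM}, whereas you spell out the two short exact sequences and chase cohomology under the ACM vanishing $h^i(\oh_\yy(t))=0$ for $0<i<\dim\yy$; your computation is correct and is essentially what that reference contains.

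There are two places where your argument diverges from the paper and needs repair. For (ii), the paper does not argue directly: it asserts that the degeneration locus $D_X$ coincides with the singularity locus of $E$ and then invokes \cite[Sec.~II.1.1]{OS}, which gives the criterion ``coherent iff $d_X\ge1$, torsion-free iff $d_X\ge2$'' for cohomology sheaves of such complexes. Your direct approach via torsion in $\operatorname{coker}\alpha$ is reasonable, but the step ``torsion-free outside a codimension-$2$ set, hence torsion-free'' is not valid as stated (a sheaf can have torsion supported in codimension $\ge2$), and, as you yourself flag, the converse direction --- producing torsion in $E$ when $d_X=1$ --- is not worked out. The clean route is exactly the paper's: identify $D_X$ with $\operatorname{Sing}(E)$ and cite the standard depth/codimension criterion.

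For (iv) your parenthetical ``so that $\beta_P$ automatically has constant rank given $\beta\alpha=0$'' is false. The relation $\beta\alpha=0$ only forces $\operatorname{im}\alpha_P\subset\ker\beta_P$; it says nothing about $\operatorname{rank}\beta_P$ being constant. Concretely, for $\yy=\p2$, $c=r=1$, take $i=0$, $j\neq0$: then $X$ is globally weak costable (so $\alpha_P$ is injective everywhere), yet $\beta$ drops rank at the single point $(1:-a:-b)$ and $E$ fails to be locally free there. The statement (iv) should be read under the monad hypothesis (i.e.\ $\beta$ surjective), and then the argument is simply: with $\beta$ a bundle surjection, $\ker\beta$ is locally free, and $E=\ker\beta/\operatorname{im}\alpha$ is locally free iff $\alpha$ is a subbundle inclusion, i.e.\ iff $\alpha_P$ is injective for all $P$, which is global weak costability by Proposition~\ref{l1}(ii). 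The paper's proof relies on the same identification of $D_X$ with the singular locus of $E$ used for (ii), implicitly in the monad setting.
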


\begin{proof}
The degeneration locus $D_X$  agrees with the singularity locus of $E$, that is, the points $P\in\yy$ for which the stalk $E_P$ is not a free $\oh_P$-module. Thus, it follows from \cite[Sec II.1.1]{OS} that $E$ is coherent iff $d_X\geq 1$, which always hold as seen in the proof of the prior proposition, and $E$ is torsion free iff $d_X\geq 2$. So the first statement of (i) and (ii) are proved. In order to check the other statement of (i), note that the restriction of (\ref{monad.pn}) to $\ell$ yields
$$
0 \to V\otimes\oh_{\ell}(-1)\stackrel{\alpha_{\ell}}{
\longrightarrow}
(V\oplus V\oplus W)\otimes\oh_{\ell}
\stackrel{\beta_{\ell}}{\longrightarrow}
V\otimes\oh_{\ell}(1) ~~$$
where
$$
\alpha_{\ell}=\left(\begin{array}{c}x\\y\\0\end{array}\right)\ \ \ \  \beta_{\ell}=\left(\begin{array}{ccc}-y~&~x~&~0\end{array}\right).
$$
Its cohomology, which coincides with the restriction of $E$ to $\ell$,
is just $W\otimes\oh_{\ell}$, so (i) is proved. Itens (iii) and (iv) hold due to Proposition \ref{l1} while (v) holds owing to \cite[Prp. 3.2]{JVM}.
\end{proof}


\subsection{The inverse construction}
\label{invconst}

In this subsection we will do the inverse construction, that is, build an ADHM datum out of a cohomology sheaf of a monad in $\yy$. First, we specify the relevant class of sheaves.

\begin{definition}
\label{defins}
A coherent sheaf $E$ on $\yy$ is called an \emph{instanton sheaf} if there exists a complex of the form
$$
\oh_{\yy}(-1)^{\oplus c} \stackrel{\alpha}{\longrightarrow}
\oh_{\yy}^{\oplus a} \stackrel{\beta}{\longrightarrow} \oh_{\yy}(1)^{\oplus c}
$$
where $\alpha$ is injective, $\beta$ is surjective and $E=\ker\beta/{\rm im}\,\alpha$. If $E$ restricts trivially to $\ell$, then $E$ is also called \emph{of trivial splitting type}.
\end{definition}

The above definition does agree with the known one in the case of projective spaces (see \cite[p. 69]{J-i}) as can be easily derived from \cite[Prp. 2 and Thm. 3]{J-i}.

\begin{proposition}
\label{prpmyy}
If $E$ is a torsion free instanton sheaf on $\yy$ of trivial splitting type with respect to $\ell$, then $E$ is the cohomology sheaf of a monad of the form
$$
E^\bullet_{X} ~:~ V\otimes\oh_{\yy}(-1) \stackrel{\alpha}{\longrightarrow}
(V\oplus V \oplus W)\otimes\oh_{\yy} \stackrel{\beta}{\longrightarrow} V\otimes\oh_{\yy}(1)
$$
where $W\simeq\ho^0(E|_{\ell})$ and $X\in\calv^{\bar{\rm n}{\rm gws}}_\yy(W,V)$, i.e., $X$ is a nondegenerated globally weak stable datum in the ADHM variety. Moreover, if $\yy$ is either $\p2$ or an ACM variety of dimension at least $3$, then $V\simeq \ho^1(E(-1))$.
\end{proposition}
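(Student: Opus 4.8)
The plan is to take a torsion free instanton sheaf $E$ of trivial splitting type, produce from its associated monad the linear maps that will serve as ADHM data, and then check that these maps satisfy the ADHM equation over $\yy$ and the nondegeneracy/stability conditions. First I would start from the defining complex $\oh_\yy(-1)^{\oplus c}\to\oh_\yy^{\oplus a}\to\oh_\yy(1)^{\oplus c}$ of Definition \ref{defins}. The maps in such a complex are given by matrices of linear forms on $\pn$; separating the coordinates $z_0,\ldots,z_d$ from $x,y$, one writes the left map as $\alpha = \alpha_0 + \alpha_x\otimes x + \alpha_y\otimes y$ and similarly for $\beta$, where $\alpha_0$ involves only the $z_i$. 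The key normalization step is to use the trivial splitting type hypothesis: since $E|_\ell \simeq W\otimes\oh_\ell$ with $W\simeq\ho^0(E|_\ell)$, the restriction of the monad to $\ell$ (where $z_0=\cdots=z_d=0$) must have cohomology $W\otimes\oh_\ell$. Up to the automorphisms of the monad (the group acting on the monad terms), one can bring $\alpha|_\ell$ and $\beta|_\ell$ to the standard Koszul-type shape appearing in the proof of Proposition \ref{l4}(i), namely $\alpha|_\ell = (x,y,0)^t$ and $\beta|_\ell=(-y,x,0)$, after identifying $V\oplus V\oplus W$ appropriately with a rank-$a$ space, forcing $a=2c+r$ and fixing $V$ as the $\oh_\yy(-1)$-term. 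This pins down the blocks: $\alpha$ and $\beta$ acquire exactly the form \eqref{alpha}, with $(A,B,I),(A',B',J)$ read off from the $z_i$-linear parts.

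Next I would verify that $X=((A,B,I),(A',B',J))$ lies in $\calv_\yy$: the condition $\beta\alpha=0$, which holds because $E^\bullet$ is a complex, is equivalent to the ADHM equation over $\yy$ by the remark following \eqref{alpha} in the ADHM construction subsection. Then I would invoke Proposition \ref{l1}: since $E^\bullet_X$ is by hypothesis a monad, $\alpha$ is injective and $\beta$ is surjective, so part (iii) of that proposition gives that $X$ is globally weak stable. Since $E$ is torsion free, Proposition \ref{l4}(ii) gives $d_X\geq 2$, i.e. $X$ is nondegenerated; hence $X\in\calv_\yy^{\bar{\mathrm n}\mathrm{gws}}$, as claimed. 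Finally, for the identification $V\simeq\ho^1(E(-1))$ when $\yy$ is $\p2$ or ACM of dimension $\geq 3$: twist the monad by $\oh_\yy(-1)$, break it into the two short exact sequences $0\to V\otimes\oh_\yy(-2)\to K\to \text{something}$ — more precisely $0\to\im\alpha(-1)\to\ker\beta(-1)\to E(-1)\to 0$ and $0\to V\otimes\oh_\yy(-2)\to(V\oplus V\oplus W)\otimes\oh_\yy(-1)\to\ker\beta(-1)\to 0$ — and chase cohomology. The ACM hypothesis (or $\yy=\p2$) kills the intermediate terms $\ho^1(\oh_\yy(-1))$, $\ho^1(\oh_\yy(-2))$, $\ho^2(\oh_\yy(-2))$, leaving an isomorphism $\ho^1(E(-1))\simeq V\otimes\ho^0(\oh_\yy)\simeq V$; this is essentially the content invoked via \cite[Prp. 3.2]{JVM} in Proposition \ref{l4}(v), and the same reference applies here.

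The main obstacle I anticipate is the normalization step: showing that after acting by monad automorphisms one really can put $\alpha|_\ell$ and $\beta|_\ell$ simultaneously into the standard form, with the middle term canonically decomposed as $V\oplus V\oplus W$ so that the resulting matrices have precisely the shape \eqref{alpha} (in particular that the $x,y$-linear parts are exactly $\id\otimes x$, $\id\otimes y$ etc., with no stray blocks). This is where the trivial splitting type is used essentially, and it requires care to check that the cohomology of the restricted complex being $W\otimes\oh_\ell$ rigidifies the restricted maps up to the allowed change of basis, and that the global monad maps then extend this shape uniquely — i.e. that the only linear forms available off $\ell$ are multiples of $x$ and $y$ and they must assemble into the identity blocks forced by the $\ell$-restriction. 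Once this structural lemma is in place, the rest is bookkeeping plus the two already-available propositions.
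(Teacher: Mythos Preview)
Your proposal is correct and follows essentially the same route as the paper. The ``main obstacle'' you flag is exactly where the paper does the work: it shows $U\simeq V\oplus V\oplus W$ by computing $\ho^0(\ker\beta_\ell)\simeq W$ from the restricted monad, then dualizing (using that $E$ is locally free near $\ell$) to see that the resulting extension splits; the standard form of $\alpha_1,\alpha_2,\beta_1,\beta_2$ is then obtained by observing that $\beta_2\alpha_1=-\beta_1\alpha_2$ is an isomorphism (from $\ker\beta_1\cap\ker\beta_2\simeq E_P$ at $P=(0:\cdots:0:1:0)$) and choosing bases accordingly. (Minor slip: your second short exact sequence should read $0\to\ker\beta(-1)\to(V\oplus V\oplus W)\otimes\oh_\yy(-1)\to V\otimes\oh_\yy\to 0$, not what you wrote.)
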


\begin{proof} Since $E$ is instanton, it is the cohomology sheaf of a monad of the form
$$
V\otimes\oh_{\yy}(-1) \stackrel{\alpha}{\longrightarrow}
U\otimes\oh_{\yy} \stackrel{\beta}{\longrightarrow} V\otimes\oh_{\yy}(1).
$$
Restricting it to $\ell$, we get
\begin{equation}
\label{moo}
V\otimes\oh_{\ell}(-1) \stackrel{\alpha_\ell}{\longrightarrow}
U\otimes\oh_{\ell} \stackrel{\beta_\ell}{\longrightarrow} V\otimes\oh_{\ell}(1).
\end{equation}
The maps $\alpha_\ell$ and $\beta_\ell$ can then be expressed in the following manner:
\begin{align*}
\alpha_\ell &=\alpha_1 x + \alpha_2 y \\
\beta_\ell  &=\beta_1 x + \beta_2 y
\end{align*}
where $\alpha_k\in{\rm Hom}(V,U)$ and $\beta_k\in{\rm Hom}(U,V)$ for each
$k=1,2$. The condition $\beta_\ell\alpha_\ell=0$ then implies that
\begin{equation}
\label{equab1}
\beta_1\alpha_1=\beta_2\alpha_2=0
\end{equation}
\begin{equation}
\label{equab2}
\beta_1\alpha_2+\beta_2\alpha_1=0.
\end{equation}
From (\ref{moo}) we get the exact sequence
$$
0\longrightarrow V\otimes{\mathcal O}_{\ell}(-1)
\stackrel{\alpha_\ell}{\longrightarrow}
\ker\beta_{\ell} \longrightarrow E|_{\ell}\longrightarrow 0.
$$
Now $\ho^p(\oh_{\ell}(-1))=0$, for $p=0,1$, and $E|_{\ell}\simeq \oh_{\ell}^{\oplus {\rm rk}\,E}$, hence $\ho^1(\ker\beta_{\ell})=0$ and
\begin{equation}
\label{equkel}
\ho^0(\ker\beta_{\ell})\simeq \ho^0(E|_{\ell})\simeq E_P
\end{equation}
for some $P\in\ell$. Notice that the choice of a basis for $\ho^0(\ker\beta_{\ell})$ corresponds to the choice of a trivialization for $E|_{\ell}$.

Similarly, from (\ref{moo}) we also get the exact sequence
$$ 0 \longrightarrow \ker\beta_\ell \longrightarrow U\otimes\oh_{\ell}
\stackrel{\beta_\ell}{\longrightarrow} V\otimes\oh_{\ell}(1) \longrightarrow 0 $$
from which we obtain the short exact sequence of linear spaces
\begin{equation}
\label{m2}
0 \longrightarrow \ho^0(\ker\beta_\ell) \longrightarrow U \stackrel{\beta_{\ell}}{\longrightarrow}
V\otimes \ho^0(\oh_{\ell}(1)) \longrightarrow 0
\end{equation}
because $\ho^0(\oh_{\ell})\simeq\cpx$ and $\ho^1(\ker\beta_\ell)=0$. Since $\ho^0(\oh_{\ell}(1))\simeq \cpx^2$ we rewrite (\ref{m2}) as
\begin{equation}
\label{wtw}
0 \longrightarrow \ho^0(\ker\beta_\ell) \longrightarrow U \stackrel{\beta_1\oplus\beta_2}{\longrightarrow} V\oplus V \longrightarrow 0.
\end{equation}

Now $E$ is locally free on a neighborhood of $\ell$, so we can apply the same argument to the dual monad
$$
0 \longrightarrow V^*\otimes\oh_{\ell}(-1) \stackrel{\beta_\ell^{\rm t}}{\longrightarrow} U^*\otimes\oh_{\ell}
\stackrel{\alpha_\ell^{\rm t}}\longrightarrow V^*\otimes\oh_{\ell}(1) \longrightarrow 0
$$
of which we derive the exact sequence
\begin{equation}
\label{equuvd}
0 \longrightarrow \ho^0(\ker \alpha_\ell^{\rm t})
\longrightarrow U^* \stackrel{\alpha_1^{\rm t}\oplus\alpha_2^{\rm t}}{\longrightarrow} V^*\oplus V^* \longrightarrow 0
\end{equation}
with $\ho^0(\ker\alpha_{\ell}^{\rm t})^*\simeq \ho^0(E|_{\ell})\simeq \ho^0(\ker\beta_{\ell})$. Dualizing (\ref{equuvd}) we get
\begin{equation}
\label{equuvn}
0 \longrightarrow V\oplus V \stackrel{\alpha_1\oplus\alpha_2}{\longrightarrow}  U  \longrightarrow\ho^0(\ker\beta_\ell)\longrightarrow 0,
\end{equation}
hence (\ref{wtw}) splits and $U\simeq V\oplus V \oplus W$ where $W=\ho^0(\ker\beta_{\ell})\simeq \ho^0(E|_{\ell})$ as desired. Then $E$ is the cohomology sheaf of the monad
$$
V\otimes\oh_{\yy}(-1) \stackrel{\alpha}{\longrightarrow}
(V\oplus V\oplus W)\otimes\oh_{\yy}\stackrel{\beta}{\longrightarrow}
V\otimes\oh_{\yy}(1)
$$
described above. Now choose $P=(0:\ldots :0:1:0)$ to write (\ref{equkel}) as
$$
\ker\beta_1\cap\ker\beta_2=\ho^0(\ker\beta_\ell) \simeq E_P=\ker\beta_1/{\rm im}\alpha_1.
$$
Thus ${\rm im}\alpha_1\cap\ker\beta_2=0$, so that $\beta_1\alpha_2=-\beta_2\alpha_1:V\to V$  are isomorphisms. Therefore, we can choose basis to write
$$
\begin{array}{lcr}
\alpha_{1} = \left( \begin{array}{c}
\id \\ 0 \\ 0 \end{array} \right)

&

\alpha_{2} = \left( \begin{array}{c}
0 \\ \id \\ 0 \end{array} \right)

&

\begin{array}{c}

\beta_1 = \left( \begin{array}{ccc}
0 ~~ & ~~ \id ~~ & ~~ 0
\end{array} \right) \\

\\

\beta_2 = \left( \begin{array}{ccc}
-\id ~~ & ~~ 0 ~~ & ~~ 0
\end{array} \right)
\end{array}
\end{array}
$$
and the description of $\alpha$ and $\beta$ as coming from a datum $X$ easily follows. Moreover, $X$ needs to satisfy the ADHM equation since  $\beta\alpha=0$; by Proposition \ref{l4}.(ii)-(iii), it has to be globally weak stable since $\beta$ is surjective and nondegenerated since $E$ is torsion free.

Finally, the last claim follows easily from the short exact sequences
$$
0 \longrightarrow K \longrightarrow (V\oplus V \oplus W)\otimes\oh_{\yy} \stackrel{\beta}{\longrightarrow} V\otimes\oh_{\yy}(1)\longrightarrow 0
$$
$$
0\longrightarrow V\otimes\oh_{\yy}(-1) \stackrel{\alpha}{\longrightarrow} K \longrightarrow E \longrightarrow 0
$$
and we are done.
\end{proof}

The following concept is crucial for establishing the correspondence between data and torsion free instanton sheaves.

\begin{definition}
A \emph{framed instanton sheaf} on $\yy$ is a pair $(E,\phi)$ where $E$ is an instanton sheaf of trivial splitting type on $\yy$ and $\phi:E|_{\ell}\to\oh^{\oplus r}_{\ell}$ is an isomorphism.
\end{definition}

Two framed instanton sheaves $(E,\phi)$ and $(E',\phi')$ are said to be isomorphic if there is a sheaf isomorphism $\Phi:E\longrightarrow E$ and a constant $\lambda\in\mathbb{C}^{\ast}$ such that the following diagram is commutative:
\begin{displaymath}
\xymatrix@1{{E|_{\ell}}\ar[r]^{\Phi|_{\ell}}\ar[d]_{\phi}& {E'|_{\ell}}\ar[d]_{\phi'} \\
\mathcal{O}^{\oplus r}_{\ell}\ar[r]^{\lambda}&\mathcal{O}^{\oplus r}_{\ell}
}
\end{displaymath}

On the other hand, two ADHM data in $\mathbf{B}_\yy(W,V)$ are said to be \emph{equivalent} if their orbits modulo the action of $G=GL(V)$ coincide.

\begin{theorem}
\label{prpoto}
Let $\yy$ be either $\mathbb{P}^2$ or an ACM projective variety of dimension at least $3$.
Then there is a 1-1 correspondence between the following sets:
\begin{itemize}
\item equivalence classes of nondegenerated globally weak stable data in the ADHM variety $\calv_\yy(r,c)$;
\item isomorphism classes of rank $r$ framed torsion free instanton sheaves of charge $c$ on $\yy$.
\end{itemize}
\end{theorem}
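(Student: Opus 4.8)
The plan is to construct the correspondence in both directions and then check they are mutually inverse. Going from data to sheaves: given a nondegenerated globally weak stable datum $X \in \calv_\yy(r,c)$, Proposition \ref{l4} tells us that $E^\bullet_X$ is a monad whose cohomology sheaf $E$ is a torsion free instanton sheaf of rank $r$ and charge $c$ restricting trivially to $\ell$. To turn this into a \emph{framed} sheaf I need to specify $\phi : E|_\ell \to \oh_\ell^{\oplus r}$; as computed in the proof of Proposition \ref{l4}(i), the restriction of the monad to $\ell$ has cohomology canonically $W \otimes \oh_\ell$, so the identification $W \simeq \cpx^r$ built into the datum (i.e. the chosen basis of $W$) furnishes a canonical framing $\phi_X$. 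One must check that equivalent data (i.e. in the same $G$-orbit) produce isomorphic framed sheaves: if $X' = g\cdot X$ then conjugating $\alpha,\beta$ by $\mathrm{diag}(g,g,\id_W)$ gives an isomorphism of monads inducing $\Phi : E \to E'$, and on $\ell$ the $W$-component is the identity, so the framing is preserved on the nose (the constant $\lambda$ is $1$ here), hence the framed sheaves are isomorphic. This gives a well-defined map from equivalence classes of data to isomorphism classes of framed torsion free instanton sheaves.

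For the reverse direction, start from a framed torsion free instanton sheaf $(E,\phi)$ of rank $r$ and charge $c$. Proposition \ref{prpmyy} already produces a monad presentation $E^\bullet_X$ with $W \simeq \ho^0(E|_\ell)$, $V \simeq \ho^1(E(-1))$ (using that $\yy$ is $\p2$ or ACM of dimension $\geq 3$), and $X \in \calv_\yy^{\bar{\mathrm n}\,\mathrm{gws}}(W,V)$. The issue is that Proposition \ref{prpmyy} involves several choices: a basis of $\ho^0(\ker\beta_\ell) \simeq W$, a splitting of $U$ as $V\oplus V\oplus W$, and normalizing bases so that $\alpha_1,\alpha_2,\beta_1,\beta_2$ take the standard form. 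I would fix the basis of $W$ by requiring $\phi$ to be the identification with $\oh_\ell^{\oplus r}$ — this rigidifies the framing — and then argue that the remaining freedom in the construction is \emph{exactly} a change of basis on $V$, i.e. the action of $G = GL(V)$. Concretely: once $r,c,V,W$ and the framing are fixed, any two monad presentations of the same framed $(E,\phi)$ differ by an isomorphism of complexes, which (since $E|_\ell$ and the framing are fixed) restricts to the identity on the $W$-summand and to some $g \in GL(V)$ on each $V$-summand, because the standard-form normalization of $\alpha_1,\alpha_2,\beta_1,\beta_2$ pins down everything else. Thus the datum $X$ is well-defined up to the $G$-action, giving a map back from isomorphism classes of framed sheaves to equivalence classes of data.

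Finally I would verify the two maps are inverse to each other. One composite: starting from $X$, build $(E,\phi_X)$, then run Proposition \ref{prpmyy} on it; the output is a datum $G$-equivalent to $X$ because the cohomology of $E^\bullet_X$ reconstructs $\ho^1(E(-1)) \simeq V$ and $\ho^0(E|_\ell) \simeq W$ compatibly, and the standard-form normalization recovers the same $\alpha,\beta$ up to conjugation — here one should spell out the natural isomorphisms between $V$ and the cohomology group realizing it, using the display sequences at the end of the proof of Proposition \ref{prpmyy}. The other composite: starting from $(E,\phi)$, produce $X$ via Proposition \ref{prpmyy} (with $W$ rigidified by $\phi$), then form $E^\bullet_X$ and its cohomology with framing $\phi_X$; this is isomorphic to $(E,\phi)$ essentially by construction, since $E^\bullet_X$ \emph{is} the monad presentation of $E$ and the framing on the $W = \ho^0(E|_\ell)$ level was chosen to match $\phi$.

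The main obstacle I anticipate is the bookkeeping of \emph{choices and framings} in showing well-definedness of the reverse map — precisely pinning down that, after fixing rank, charge, the vector spaces $V,W$ and the framing, the only residual ambiguity in the monad from Proposition \ref{prpmyy} is the $GL(V)$-action and nothing more (e.g. no extra automorphisms of $W$ survive, and the normalization of $\alpha_i,\beta_i$ truly is canonical once $\phi$ is fixed). This is where one must be careful that the framing kills the $\cpx^*$-ambiguity that would otherwise appear, matching the $\lambda$ in the definition of isomorphic framed sheaves with the fact that conjugation by $\mathrm{diag}(g,g,\id_W)$ acts trivially on $W$. The sheaf-theoretic inputs (monad, cohomology, torsion-freeness, the identification $V \simeq \ho^1(E(-1))$) are all already in hand from Propositions \ref{l1}, \ref{l4} and \ref{prpmyy}, so the work is genuinely in the linear-algebra/equivalence-relation layer rather than in geometry.
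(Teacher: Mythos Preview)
Your overall strategy matches the paper's, but you have glossed over the one genuinely nontrivial step. You write that ``any two monad presentations of the same framed $(E,\phi)$ differ by an isomorphism of complexes'' and then proceed to analyze what such an isomorphism must look like. But why should two linear monads with isomorphic cohomology be isomorphic \emph{as complexes} at all? This is false for monads in general; it holds here precisely because of the hypothesis that $\yy$ is $\p2$ or ACM of dimension at least $3$, via a lifting criterion of the type in \cite[Lem.~4.1.3]{OSS}: one needs certain $\Hom$ and $\ext^1$ groups among the terms of the monad to vanish so that an isomorphism of cohomology sheaves lifts (uniquely) to an isomorphism of complexes. The paper invokes exactly this lemma. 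You use the ACM hypothesis only to identify $V\simeq\ho^1(E(-1))$, but that is not its main role in the argument.

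Consequently, the obstacle you flag --- the ``bookkeeping of choices and framings'' --- is actually the easy part. Once one knows that any isomorphism $E\to E'$ of cohomology sheaves lifts uniquely to an isomorphism of the underlying monads, the explicit shape $(g\oplus g\oplus h)\otimes\id$ of that isomorphism follows from the standard-form normalization you describe, and then $h\in GL(W)$ is absorbed into the framing while $g\in GL(V)$ is exactly the $G$-action. The substantive input is the lifting itself, and your proposal does not justify it; without it, the reverse map (and hence the bijectivity) is not well defined.
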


\begin{proof}
For $g\in GL(V)$ and $h\in GL(W)$, consider the following data in $\calv_\yy(W,V)$:
$$
X=((A,B,I),(A',B',J))
$$
$$
X'=((gAg^{-1},gBg^{-1},gIh^{-1}),(gA'g^{-1},gB'g^{-1},hJg^{-1}))
$$
Then the map between ADHM complexes on $\yy$, given by
\begin{displaymath}
\xymatrix@1{{E_X^{\bullet}\,:\, 0}\ar[r] & {V\otimes\oh_{\yy}}\ar[r]^{\alpha\ \ \ \ \ \ \ \ \  }\ar[d]_{g\otimes 1} & {(V\oplus V\oplus W)\otimes\oh_{\yy}}\ar[d]_{(g\oplus g\oplus h)\otimes\id}\ar[r]^{\ \ \ \ \ \ \beta} & {V\otimes\oh_{\yy}}\ar[r]\ar[d]_{g\otimes 1} & {0}\\
{E_{X'}^{\bullet}\, :\, 0}\ar[r] & {V\otimes\oh_{\yy}}\ar[r]^{\alpha'\ \ \ \ \ \ \ \ \  }& {(V\oplus V\oplus W)\otimes\oh_{\yy}}\ar[r]^{\ \ \ \ \ \ \beta'} & {V\otimes\oh_{\yy}}\ar[r] & {0}&
}
\end{displaymath}
is an isomorphism. Conversely, any isomorphism between ADHM complexes are of the above form. But we have already seen in Proposition \ref{prpmyy} that any torsion free instanton sheaf of rank $r$ and charge $c$ is the cohomology of an ADHM complex associated to a nondegenerated globally weak stable datum in $\calv_{\yy}$ and conversely by Proposition \ref{l4}.(v). Moreover, isomorphisms between cohomology sheaves lift in an unique way to isomorphisms between monads since the conditions of \cite[Lem. 4.1.3]{OSS} are satisfied for linear monads on both $\mathbb{P}^2$ and ACM varieties. So, in order to get the result, we just observe that an $h\in GL(W)$ is nothing but a choice of framing for the cohomology sheaf of an ADHM complex.
\end{proof}

In particular, Theorem \ref{prpoto} provides a (set-theoretical) bijection between the set of isomorphism classes of rank $r$ framed torsion free instantons sheaves of charge $c$ on $\yy$ and points of $\calm^{\bar{\rm n}{\rm gws}}_{\yy}(r,c)$. So we have the following.

\begin{corollary}
The moduli space of isomorphism classes of rank $r$ framed torsion free instanton sheaves of charge $c$ on $\yy$ is a quasi projective variety.
\end{corollary}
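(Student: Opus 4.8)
The plan is to deduce the corollary immediately from Theorem \ref{prpoto} together with the structural results established for $\calm_\yy^{\rm st}$ in Section \ref{subqav}. First I would recall that Theorem \ref{prpoto} gives a set-theoretic bijection between isomorphism classes of rank $r$ framed torsion free instanton sheaves of charge $c$ on $\yy$ and equivalence classes of nondegenerated globally weak stable data in $\calv_\yy(r,c)$; by definition the latter set is precisely the point set of $\calm_\yy^{\bar{\rm n}{\rm gws}}(r,c) = \calv_\yy^{\bar{\rm n}{\rm gws}}(r,c)/G$. So it suffices to exhibit $\calm_\yy^{\bar{\rm n}{\rm gws}}(r,c)$ as a quasi-projective variety.

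The key point is that the nondegeneration condition $d_X \geq 2$ and the global weak stability condition are both open conditions on $\calv_\yy^{\rm st}$ in the Zariski topology: global weak stability is already listed as open among the chain of subsets of $\mathbf{B}_\yy$, and $\calv_\yy^{\bar{\rm n}}$ is cut out by requiring the codimension of the degeneration locus $D_X$ of $\alpha$ (a determinantal locus depending algebraically on the entries of $X$) to be at least $2$, which is again open on $\calv_\yy$. Hence $\calv_\yy^{\bar{\rm n}{\rm gws}}(r,c)$ is an open subvariety of $\calv_\yy^{\rm st}(r,c)$, on which $G = GL(V)$ acts freely and properly (freeness by Proposition \ref{Sh1}, properness as part of the GIT picture of Proposition \ref{closed-stable-d} and Remark \ref{GIT}). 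Consequently the geometric quotient $\calm_\yy^{\bar{\rm n}{\rm gws}}(r,c)$ exists as an open subvariety of $\calm_\yy^{\rm st}(r,c) = \calv_\yy^{\rm st}/G$, which is itself quasi-projective by construction (it is $\calv_\yy /\!/_\chi G$, projective over $\spec \Gamma(\calv_\yy)^G$). An open subvariety of a quasi-projective variety is quasi-projective, so $\calm_\yy^{\bar{\rm n}{\rm gws}}(r,c)$ is quasi-projective, and the bijection of Theorem \ref{prpoto} transports a (quasi-projective) variety structure to the moduli set. This completes the proof.

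The only point requiring care — and the one I would expect to be the main obstacle — is verifying that the moduli set inherits a genuine \emph{variety structure}, not merely a bijection with the points of one: one must check that the quotient $\calv_\yy^{\bar{\rm n}{\rm gws}}/G$ is actually a geometric (or at least good categorical) quotient in the category of schemes, so that ``quasi-projective variety'' is a meaningful assertion about the moduli functor and not just its underlying set. Here Remark \ref{GIT} does the work: stable solutions of the ADHM equation are GIT stable, so $\calm_\yy^{\rm st}$ is a good categorical quotient by reductivity of $G$, and the same holds after restricting to the $G$-invariant open subset $\calv_\yy^{\bar{\rm n}{\rm gws}}$. I would therefore phrase the proof so that the corollary asserts exactly the quasi-projectivity of $\calm_\yy^{\bar{\rm n}{\rm gws}}(r,c)$, which the preceding machinery already guarantees, leaving no essentially new argument to be carried out.

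\begin{proof}
By Theorem \ref{prpoto}, the set of isomorphism classes of rank $r$ framed torsion free instanton sheaves of charge $c$ on $\yy$ is in natural bijection with the point set of $\calm_\yy^{\bar{\rm n}{\rm gws}}(r,c) = \calv_\yy^{\bar{\rm n}{\rm gws}}(r,c)/G$. Now $\calv_\yy^{\rm gws}$ is open in $\calv_\yy$, and the nondegeneration condition $d_X \geq 2$ is also open on $\calv_\yy$, since $D_X$ is a determinantal locus depending algebraically on $X$; hence $\calv_\yy^{\bar{\rm n}{\rm gws}}(r,c)$ is an open subvariety of $\calv_\yy^{\rm st}(r,c)$. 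The group $G = GL(V)$ acts freely on it by Proposition \ref{Sh1} and, by Propositions \ref{Sh1} and \ref{closed-stable-d} together with Remark \ref{GIT}, the quotient $\calm_\yy^{\rm st} = \calv_\yy^{\rm st}/G \simeq \calv_\yy /\!/_\chi G$ is a good categorical (indeed geometric) quotient, quasi-projective over $\cpx$. Restricting to the $G$-invariant open subset, $\calm_\yy^{\bar{\rm n}{\rm gws}}(r,c)$ is an open subvariety of $\calm_\yy^{\rm st}(r,c)$, hence itself quasi-projective. Transporting this structure via the bijection of Theorem \ref{prpoto} yields the claim.
\end{proof}
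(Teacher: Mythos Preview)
Your proposal is correct and follows exactly the approach the paper intends: the paper does not give a separate proof of this corollary at all, stating it as an immediate consequence of the paragraph preceding it, which notes that Theorem \ref{prpoto} yields a set-theoretic bijection with $\calm^{\bar{\rm n}{\rm gws}}_{\yy}(r,c)$, already known to be quasi-projective as an open subset of $\calm_\yy^{\rm st}$. Your write-up simply spells out carefully (openness of the nondegeneration and global weak stability conditions, GIT quotient structure via Remark \ref{GIT}) what the paper leaves implicit.
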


We now turn our attention to the case of  $\yy=\pn$.


\section{Instantons on Projective Spaces}
\label{secmod}

We devote this section to the particular case of projective spaces, that is, the case where $\yy=\pn$ with $n=d+2\ge2$. When so, as seen in Example \ref{exappn}, any datum $X\in\calv_{\pn}$ is of the form $X=((A,B,I),(A,B,J))$, so it is more convenient to write
$$
X=(A,B,I,J)\in \mathbf{B}'\otimes\ho^0(\oh_{\pd}(1))
$$
where
$$
\bb'=\bb'(W,V)=\bb'(r,c):={\rm End}(V)^{\oplus 2}\oplus{\rm Hom}(W,V)\oplus {\rm Hom}(V,W).
$$
The ADHM equation reduces to
\begin{equation}
\label{equadp}
[A,B]+IJ=0.
\end{equation}
Moreover, weak global (resp. local) stability (corresp. costability) agrees with global (resp. local) stability (corresp. costability) in such a case. Indeed, write $X=(A,B,I,J)\in\calv_{\pn}$, fix $P\in\pn$ and assume $X_P$ is not costable. Hence there is a nonzero subspace $S\subset V$ such that $A_P(S),B_P(S)\subset S\subset\ker J_P$. Fixing coordinates for $P$, we may assume that $X_P=(A_P,B_P,I_P,J_P)$ lies in $\bb'$. Now, since $X\in\calv_{\pn}$, we have that $A_P|_S$ and $B_P|_S$ commute because $J_P|_S=0$. In particular, $A_P|_S$ and $B_P|_S$ are simultaneously diagonalizable. In particular, $A_P$ and $B_P$ have a common eigenvector in $S$ so $X_P$ is not weak costable. Hence global (resp. local) costability agrees with global (resp. local) costability. A similar statement holds for stability because $X_P$ is (resp. weak) stable iff $X^t_P:=(B_P^t,A_P^t,J_P^t,I_P^t)$ is (resp. weak) costable for all $P\in\pn$. Besides, there are no degenerated data in $\calv_{\pn}$.

For the sake of simplicity, we adopt the notation
$$
\calv_d:=\calv_{\mathbb{P}^{d+2}}=\calv_{\mathbb{P}^{n}}
$$
and, as seen above, it is enough to consider the sequence of proper inclusions
$$
\calv_d^{\rm gr} \subset \calv_d^{\rm gs} \subset \calv_d^{\rm ls} \subset \calv_d^{\rm st}.
$$


As well as for the ADHM variety in projective spaces, we adopt the same notation
$$
\calm_d:=\calm_{\mathbb{P}^{d+2}}=\calm_{\mathbb{P}^{n}}
$$
for the ADHM quotient variety of stable orbits in which we naturally consider the sequence
$$
\calm_d^{\rm gr} \subset \calm_d^{\rm gs} \subset \calm_d^{\rm ls} \subset \calm_d^{\rm st}.
$$
The simplest case, when $d=0$, is well known and can be found, for instance, in \cite{N2}:
${\mathcal M}_{0}^{\rm st}(r,c)$, which agrees with ${\mathcal M}_{0}^{\rm gs}(r,c)$, is an irreducible, nonsingular quasi-projective variety of dimension $2rc$, and it admits a complete hyperk\"ahler metric. In \cite{FJ2}, it is also proved that ${\mathcal M}^{\rm gs}_{1}(r,1)$ is irreducible and nonsingular, which we generalize to any $d$ in the following proposition.

\begin{proposition}
\label{dimension}
$\mathcal{M}_{d}^{\rm gs}(r,1)$ is a nonsingular quasi-projective variety of dimension $2(d+1)r-\frac{1}{2}d(d-1)$ if $r>d$ and empty otherwise.
\end{proposition}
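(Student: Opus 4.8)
The plan is to reduce everything to explicit linear algebra, using that $V=\mathbb C$ when $c=1$. With $V=\mathbb C$ one has $\End(V)=\mathbb C$, $\Hom(W,V)\cong W^{*}$ and $\Hom(V,W)\cong W$, so, using the $\yy=\pn$ normal form of Example \ref{exappn}, a point of $\calv_d(r,1)$ is a tuple $X=(A,B,I,J)$ with $A,B\in\mathbb C^{d+1}$ and $I=(I_0,\dots,I_d)$, $J=(J_0,\dots,J_d)$, where $I_k\in W^{*}$ and $J_k\in W$. Since every entry of $A,B$ is a scalar, all commutators $[A_k,B_m]$ vanish, so the $d$-dimensional ADHM equations collapse to $\langle I_k,J_k\rangle=0$ for all $k$ and $\langle I_k,J_m\rangle+\langle I_m,J_k\rangle=0$ for $k<m$, where $\langle\,,\,\rangle$ is the pairing between $W^{*}$ and $W$. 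Fixing $W\cong\mathbb C^{r}$ and regarding $I,J$ as the $r\times(d+1)$ matrices with columns $I_k,J_k$, this says precisely that $I^{\mathrm t}J\in\mathrm{Mat}_{(d+1)\times(d+1)}(\mathbb C)$ is skew-symmetric, while $A$ and $B$ are completely unconstrained; thus $\calv_d(r,1)\cong\mathbb C^{2(d+1)}\times\{(I,J)\mid I^{\mathrm t}J\ \text{skew-symmetric}\}$.

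Next I would pin down the globally stable locus. Since the only proper subspace of $V=\mathbb C$ is $0$, for $P=(p_0:\dots:p_d:a:b)\in\pn$ the datum $Y_P$ is stable iff $I_P=\sum_k p_kI_k\neq0$, so $X$ is globally stable iff $\sum_k p_kI_k\neq0$ for every $(p_0:\dots:p_d)\in\pd$, i.e.\ iff $I_0,\dots,I_d$ are linearly independent in $\mathbb C^{r}$. This forces $r\geq d+1$, whence $\calm_d^{\rm gs}(r,1)=\emptyset$ when $r\leq d$. Assume now $r>d$, and let $U\subset\mathrm{Mat}_{r\times(d+1)}(\mathbb C)$ be the nonempty Zariski-open subset of matrices of (full) column rank $d+1$, which is smooth and irreducible. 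The morphism $\pi\colon\calv_d^{\rm gs}(r,1)\to U$, $X\mapsto I$, is surjective and has fibre over $I$ equal to $\mathbb C^{2(d+1)}\times\ker\Lambda_I$, where $\Lambda_I\colon\mathrm{Mat}_{r\times(d+1)}(\mathbb C)\to\{\text{symmetric }(d+1)\times(d+1)\text{ matrices}\}$ is $\Lambda_I(J)=I^{\mathrm t}J+J^{\mathrm t}I$. For $I\in U$ the map $\Lambda_I$ is surjective (see the last paragraph), so $\ker\Lambda_I$ has constant dimension $r(d+1)-\tfrac{(d+1)(d+2)}{2}$ and $\pi$ realizes $\calv_d^{\rm gs}(r,1)$ as the total space of a vector bundle over the smooth irreducible base $U$. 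Hence $\calv_d^{\rm gs}(r,1)$ is smooth and irreducible, of dimension
$$\dim\calv_d^{\rm gs}(r,1)=r(d+1)+2(d+1)+\Bigl(r(d+1)-\tfrac{(d+1)(d+2)}{2}\Bigr)=2r(d+1)+2(d+1)-\tfrac{(d+1)(d+2)}{2}.$$

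Finally I would descend by the action of $G=GL(V)=\mathbb C^{*}$, which preserves the open set $\calv_d^{\rm gs}(r,1)\subseteq\calv_d^{\rm st}(r,1)$ and acts on it freely (stabilizers are trivial by Proposition \ref{Sh1}) and properly. By the general facts recorded after Proposition \ref{closed-stable-d}, $\calm_d^{\rm gs}(r,1)=\calv_d^{\rm gs}(r,1)/G$ is therefore an open subvariety of the quasi-projective variety $\calm_d^{\rm st}(r,1)$, is nonsingular because $\calv_d^{\rm gs}(r,1)$ is, and satisfies $\dim\calm_d^{\rm gs}(r,1)=\dim\calv_d^{\rm gs}(r,1)-c^{2}=\dim\calv_d^{\rm gs}(r,1)-1$. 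Since $2(d+1)-\tfrac{(d+1)(d+2)}{2}-1=-\tfrac12 d(d-1)$ — clear denominators: $4(d+1)-(d+1)(d+2)-2=-d(d-1)$ — this yields $\dim\calm_d^{\rm gs}(r,1)=2(d+1)r-\tfrac12 d(d-1)$, as claimed.

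There is no deep obstacle here; the one point needing care is the surjectivity of $\Lambda_I$ for $I\in U$, where one must resist writing $J=\tfrac12 I(I^{\mathrm t}I)^{-1}S$ because $I^{\mathrm t}I$ can be singular over $\mathbb C$. Instead, $I^{\mathrm t}\colon\mathbb C^{r}\to\mathbb C^{d+1}$ is surjective because $I$ has rank $d+1$, so $J\mapsto I^{\mathrm t}J$ is onto $\mathrm{Mat}_{(d+1)\times(d+1)}(\mathbb C)$; post-composing with $M\mapsto M+M^{\mathrm t}$, which maps onto the symmetric matrices since $S=\tfrac12 S+(\tfrac12 S)^{\mathrm t}$, shows $\Lambda_I$ is onto. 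It is also worth recording explicitly that neither $A,B$ nor $J$ enters the global stability condition, which is exactly what makes $\pi$ a genuine vector bundle rather than a more complicated fibration.
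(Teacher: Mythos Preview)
Your proof is correct. The approach differs from the paper's mainly in how smoothness and the dimension count are obtained: the paper computes the Jacobian of the ADHM equations directly and checks it has maximal rank precisely when the $I_k$ are linearly independent, while you reformulate the equations as ``$I^{\mathrm t}J$ is skew-symmetric'' and exhibit $\calv_d^{\rm gs}(r,1)$ as the total space of a vector bundle over the open set $U$ of full-rank $I$. These are really the same computation in different clothing---the Jacobian being surjective is exactly your $\Lambda_I$ being onto---but your packaging is a bit cleaner and yields irreducibility for free. The paper also gives a more explicit description of the quotient as $\cpx^{2(d+1)}$ times an open set in a weighted projective space with weights $\pm1$, which you bypass by appealing directly to the general GIT setup already established; this costs you nothing and is arguably tidier.
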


\begin{proof}
Write a datum $X=(A,B,I,J)$ where the components are
$$
A = A_{0}\otimes z_0 + \cdots + A_{d}\otimes z_d\ \ \ \ \ \ \ \
B = B_{0}\otimes z_0 + \cdots + B_{d}\otimes z_d
$$
$$
I = I_0\otimes z_0 + \cdots + I_d\otimes z_d\ \ \ \ \ \ \ \
J = J_0\otimes z_0 + \cdots + J_d\otimes z_d\
$$
In this case, since $c=1$, the $A_k,B_k$ in are in $\cpx$, while the $I_k$ (resp. $J_k$) can be regarded as row (resp. column) matrix vectors in $\cpx^r$. Global stability reduces to the condition that the $I_k$ are linearly independent, so $\mathcal{M}_{d}^{\rm gs}(r,1)$ is empty if $r\leq d$. The group $G=\cpx^*$ acts trivially on the $A_{k},B_{k}$ and by multiplication by $t$ (resp. $t^{-1}$) on the $I_k$ (resp. $J_k$). The ADHM equations reduce to
$$
\begin{array}{cl}
I_kJ_k=0  & \ \  \ k=0,\dots, d \\

I_kJ_m + I_mJ_k = 0  &\ \ \  k<m=0,\dots,d
\end{array}
$$

It is then easily seeing that $\mathcal{M}_d^{\rm gs}(r,1)=\cpx^{2(d+1)}\times{\mathcal B}(d,r)$, where ${\mathcal B}(d,r)$ is the (open) set of solutions of the ADHM equations in the $(2(d+1)r-1)$-dimensional weighted projective space
$$ \begin{array}{ccc}
\mathbb{P} (\underbrace{1,\dots\dots,1} ,&\underbrace{-1,\dots\dots,-1}) \\
\ \ (d+1)r & (d+1)r \end{array} $$
such that the $I_k$ are linearly independent. This shows that ${\mathcal M}_d^{\rm gs}(r,1)$ is quasi-projective. The ADHM equations also yield the ${{d+2}\choose{2}}\times 2(d+1)r$ Jacobian matrix
$$
\left(
\begin{array}{cccccccccc}
\ddots &       &       &       &        &\ddots &       &           &         &          \\
       & J_k^t &       &       &        &       & I_k   &           &         &          \\
       &       &\ddots &       &        &       &       &  \ddots   &         &          \\
       &       &       & J_m^t &        &       &       &           & I_m     &          \\
       &       &       &       & \ddots &       &       &           &         & \ddots   \\
       &       &\vdots &       &        &       &       &   \vdots  &         &          \\
       & J_m^t &       & J_k^t &        &       & I_m   &           & I_k     &          \\
       &       &\vdots &       &        &       &       &  \vdots   &         &          \\
\end{array}
\right)
$$
which is of maximal rank if and only if the $I_k$ are linearly independent.
This shows that ${\mathcal M}_d^{\rm gs}(r,1)$ is nonsingular of the desired dimension.
\end{proof}

\subsection{The Moduli of Framed Instanton Bundles on $\mathbb{P}^{n}$}

In this section we study the moduli functor problem of framed instanton bundles on $\mathbb{P}^{n}$ by means of their monadic description.

Theorem \ref{prpoto} is an important step forward to get a moduli space for isomorphism classes of framed instanton bundles on projective spaces, which is the main concern of this subsection. In order to do so, we first introduce few notation: for every two sheaves, $\mathcal{F}$ on $\mathbb{P}^{n}$ and $\mathcal{G}$ on a scheme $S$, we put $\mathcal{F}\boxtimes \mathcal{G}:=p^{\ast}\mathcal{F}\otimes q^{\ast}\mathcal{G},$ where $p:\mathbb{P}^{n}\times S\longrightarrow\mathbb{P}^{n}$ is the projection on the first factor and $q$ is the projection $\mathbb{P}^{n}\times S\longrightarrow S$ on the second one. We also denote by $k(s)$ the residue field of a closed point $s\in S$.

Now, we start by introducing a moduli functor. Let
$$
\mathfrak{M}^{\mathbb{P}^n}_{r, c}: \mathfrak{S}ch \longrightarrow \mathfrak{S}et
$$
be the functor from the category of noetherian schemes of finite type to the category of sets, which is defined as follows:
to every scheme $S\in Obj(\mathfrak{S}ch)$ we associate the set
$$
\mathfrak{M}^{\mathbb{P}^{n}}_{r, c}(S)=\{\,\text{equivalence classes of pairs}\ (\mathcal{F}, \phi)\,\}
$$
where we have:
\begin{itemize}
\item[(i)] $\mathcal{F}$ is a coherent sheaf on $\mathbb{P}^{n}\times S$ which is flat on $S$;
\item[(ii)] $\phi:\mathcal{F}|_{\ell\times S}\to\mathcal{G}$ is an morphism (called \emph{framing}) to a certain sheaf $\mathcal{G}$ on $\ell\times S$ which is flat on $S$ and such that $\mathcal{G}\otimes k(s)\cong\mathcal{O}^{\oplus r}_{\ell}$ for every $s\in S$;
\item[(iii)] $(\mathcal{F}_s, \phi(s))$ is a framed instanton bundle on $\mathbb{P}^{n}$ of rank $r$ and charge $c$ for every $s\in S$, where $\mathcal{F}_s:=\mathcal{F}\otimes k(s)$ and $\phi(s):=\phi\otimes k(s)$;
\item[(iv)] $(\mathcal{F},\phi)\sim (\mathcal{F}',\phi')$ if there is a line bundle $L$ on $S$ such that $\mathcal{F}'\cong\mathcal{F}\otimes q^{\ast}L,$ and such that the following diagram commutes:
$$\xymatrix{\mathcal{F}\otimes q^{\ast}L|_{\ell\times S}\ar[r]\ar[d]_{\phi} & \mathcal{F}'|_{\ell\times S}\ar[d]^{\phi'}\\ \mathcal{G}\ar[r]& \mathcal{G}'}$$
\end{itemize}

The pull-back is defined as follows. Given a morphism $f:S'\to S$ and $[(\mathcal{F},\phi)]\in\mathfrak{M}^{\mathbb{P}^{n}}_{r,c}(S)$, we set
$$
f^{*}([(\mathcal{F},\phi)]):=[(({\rm id}_{\pn}\times f)^{*}\mathcal{F},\phi^{*})]
$$
where we naturally define
$$
\phi^*:({\rm id}_{\ell}\times f)^{*}(\mathcal{F}|_{\ell})\longrightarrow\mathcal{G}^*.
$$
This turns $\mathfrak{M}^{\mathbb{P}^n}_{r, c}$ into a contravariant family functor.

\

Now, the key point will be the use of the relative version of the Beilinson's Theorem. We remark that most of this subsection is a generalization of Le Potier's techniques \cite{Le}, which he used in order to describe the moduli space of stable bundles of rank 2 on $\mathbb{P}^{2}$. Le Potier's proof can also be found in \cite[Chp.II, \S4]{OSS}, and its generalization to the case framed torsion-free sheaves on multi-blow-ups of the projective plane can be found in \cite{henni}.

Let us consider a scheme $S$ which is noetherian and of finite type. Consider the following diagram:
\begin{displaymath}
\xymatrix@1{\mathbb{P}^{n}\times\mathbb{P}^{n}\times S \ar[r]^{\quad pr_{13}}\ar[d]_{pr_{23}} & \mathbb{P}^{n}\times S \ar[d]^{q} \\
\mathbb{P}^{n}\times S \ar[r]_{q}& S
}
\end{displaymath}
and the relative Euler sequence:
$$
0\longrightarrow\mathcal{O}_{\mathbb{P}^{n}\times S}(-1)\longrightarrow\mathcal{O}^{\oplus(n+1)}_{\mathbb{P}^{n}\times
S}\longrightarrow Q\boxtimes\mathcal{O}_{S}\longrightarrow 0
$$
where $Q=T\mathbb{P}^{n}(-1)$ is the twisted tangent bundle. One has the following:

\

\noindent{\bf Relative Beilinson's Theorem.} \emph{For every coherent sheaf $\mathcal{F}$ on $\mathbb{P}^{n}\times S$ there is a spectral sequence $E^{i,j}_{r}$ with $E_{1}$-term $$E_{1}^{i,j}=\mathcal{O}_{\mathbb{P}^{n}}(i)\boxtimes\mathcal{R}^{j}q_{\ast}(\mathcal{F}\otimes\Omega_{\mathbb{P}^{n}\times S/S}^{-i}(-i))$$ which converges to $$E_{\infty}^{i,j}=\left\{\begin{array}{ll} \mathcal{F}& i+j=0 \\ 0 & {\rm otherwise.} \end{array}\right.$$}

\

The above result is the tool we need to have the following.

\begin{theorem}
\label{thmfin}
The quasiprojective scheme $\calm_{d}^{\rm gr}(r,c)$ of globally regular solutions to the $d$-dimensional ADHM equation modulo the action of $GL(c)$ is a fine moduli space for the isomorphism classes of rank $r$ framed instantons bundles of charge $c$ on $\mathbb{P}^{d+2}$.
\end{theorem}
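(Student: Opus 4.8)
The goal is to upgrade the set-theoretic bijection of Theorem \ref{prpoto} (applied with $\yy=\pn$, noting there are no degenerated data and that for locally free instanton sheaves global weak stability is global stability and $\calm_d^{\rm gr}$ is the regular locus inside $\calm_d^{\rm gws}$) into an isomorphism of functors $\mathfrak{M}^{\pn}_{r,c}\simeq \underline{\mathrm{Hom}}(-,\calm_d^{\rm gr}(r,c))$. The strategy is the classical Le Potier / Beilinson argument: given a family $(\mathcal{F},\phi)\in\mathfrak{M}^{\pn}_{r,c}(S)$, apply the Relative Beilinson Theorem to $\mathcal{F}$ to produce a relative monad on $\pn\times S$ whose cohomology is $\mathcal{F}$, extract from the two relevant relative cohomology sheaves the locally free $\mathcal{O}_S$-modules that play the roles of $V$ and $W$, and thereby build a morphism $S\to\calm_d^{\rm gr}(r,c)$ (well-defined up to the $GL(V)$-action, which the quotient construction kills). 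Conversely the universal ADHM datum over $\calm_d^{\rm gr}$ plugged into the monad \eqref{monad.pn} gives a universal framed instanton bundle, hence a transformation the other way; then check the two are mutually inverse.

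\emph{Step 1: the relative monad.} For $(\mathcal{F},\phi)\in\mathfrak{M}^{\pn}_{r,c}(S)$, each fiber $\mathcal{F}_s$ is a framed instanton bundle of charge $c$, so by the cohomological characterization of instanton sheaves (\cite{J-i}, and the linear-monad theory of \cite{OSS}) the only nonvanishing terms $\mathcal{R}^jq_*(\mathcal{F}\otimes\Omega^{-i}_{\pn\times S/S}(-i))$ are, fiberwise, $H^1(\mathcal{F}_s(-1))\cong V$ in the columns $i=-1,0,1$ with the appropriate twists, all of dimension $c$ (and the spectral sequence degenerates to the monad \eqref{monad.pn}). By cohomology-and-base-change these direct images are locally free on $S$; set $\mathcal{V}:=\mathcal{R}^1q_*(\mathcal{F}(-1))$, a rank-$c$ bundle on $S$, and the $E_1$-page of the relative Beilinson spectral sequence collapses (as in \cite[Ch.~II,~\S4]{OSS}) to a relative monad
\begin{equation*}
\mathcal{V}\boxtimes\mathcal{O}_{\pn}(-1)\stackrel{\alpha}{\longrightarrow}(\mathcal{V}^{\oplus 2}\oplus \mathcal{W})\boxtimes\mathcal{O}_{\pn}\stackrel{\beta}{\longrightarrow}\mathcal{V}\boxtimes\mathcal{O}_{\pn}(1)
\end{equation*}
with cohomology $\mathcal{F}$, where $\mathcal{W}:=q_*(\mathcal{F}|_{\ell\times S})$ is rank $r$ and the splitting $U\simeq V^{\oplus2}\oplus W$ comes exactly as in the proof of Proposition \ref{prpmyy}, applied relatively over $S$ (restrict the monad to $\ell\times S$, use $R^\bullet q_*\mathcal{O}_\ell(-1)=0$ and self-duality of $\mathcal{F}$ near $\ell$ to split; the framing $\phi$ trivializes $\mathcal{W}$). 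The maps $\alpha,\beta$ are then of the form \eqref{alpha} for morphisms $A,B\in\mathrm{End}(\mathcal{V})\otimes H^0(\opd(1))$, $I\in\mathrm{Hom}(\mathcal{W},\mathcal{V})\otimes H^0(\opd(1))$, $J\in\mathrm{Hom}(\mathcal{V},\mathcal{W})\otimes H^0(\opd(1))$ — that is, a family of ADHM data — which satisfy the $d$-dimensional ADHM equations fiberwise (since $\beta\alpha=0$) hence identically, and which are fiberwise globally regular (since each $\mathcal{F}_s$ is locally free: use Proposition \ref{l4}.(iii)--(iv)). Trivializing $\mathcal{V}$ and $\mathcal{W}$ on an open cover of $S$ gives local morphisms $S\supset U\to \calv_d^{\rm gr}(r,c)$; the ambiguity in the trivialization of $\mathcal{V}$ is precisely the $GL(c)$-action, so these glue to a global morphism $\Theta_{(\mathcal{F},\phi)}:S\to\calm_d^{\rm gr}(r,c)$, and one checks it is independent of the choice of representative in the equivalence class (iv) — twisting $\mathcal{F}$ by $q^*L$ twists $\mathcal{V}$ by $L$, which does not change the induced map to the quotient. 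Functoriality in $S$ (compatibility with pullback $f:S'\to S$) is immediate from base-change for the $\mathcal{R}^jq_*$.

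\emph{Step 2: the universal object and inversion.} On $\calm_d^{\rm gr}(r,c)$ itself, or rather on $\calv_d^{\rm gr}(r,c)$, there is a tautological ADHM datum with values in $\mathrm{End}(\widetilde V)$ etc.\ for the trivial bundle $\widetilde V$ of rank $c$; since $G=GL(c)$ acts freely and properly on $\calv_d^{\rm gr}$ (Proposition \ref{Sh1}), $\widetilde V$ descends to a rank-$c$ bundle on $\calm_d^{\rm gr}$ and the tautological datum descends to a family of ADHM data over $\calm_d^{\rm gr}$; feeding it into \eqref{monad.pn} yields a monad on $\pn\times\calm_d^{\rm gr}$ whose cohomology $\mathcal{E}^{\rm univ}$ is, fiberwise, the instanton bundle of Proposition \ref{l4}.(v), flat over $\calm_d^{\rm gr}$, together with its tautological framing along $\ell$; this is the universal family, representing the functor. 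Finally one verifies $\Theta_{(\mathcal{F},\phi)}^*(\mathcal{E}^{\rm univ},\phi^{\rm univ})\sim(\mathcal{F},\phi)$ and that starting from a morphism $g:S\to\calm_d^{\rm gr}$, pulling back the universal family and running Step 1 recovers $g$ — both reduce, via the uniqueness clause in \cite[Lem.~4.1.3]{OSS} for maps of linear monads, to the bijection of Theorem \ref{prpoto} fiber by fiber, now with parameters.

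\emph{Main obstacle.} The delicate point is not the fiberwise bijection, which is Theorem \ref{prpoto}, but the \emph{relative} monad construction: one must ensure that the Beilinson $E_1$-sheaves are locally free (cohomology and base change, using the uniform fiberwise vanishing guaranteed by the instanton condition) and that the degeneration of the spectral sequence, the splitting $U\simeq V^{\oplus2}\oplus W$, and the normalization of $\alpha_1,\alpha_2,\beta_1,\beta_2$ into the standard block form can all be carried out \emph{in families} — i.e.\ that the choices in the proof of Proposition \ref{prpmyy} can be made Zariski-locally on $S$ and then patched, with the patching cocycle landing in $GL(\mathcal{V})$ (and a $\mathbb{C}^*$ coming from the framing), so that the resulting classifying map is globally well-defined on the GIT quotient. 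This is exactly the technical heart of Le Potier's argument \cite{Le}, \cite[Ch.~II,~\S4]{OSS}, and the work consists in checking that nothing in that argument uses more than the relative Beilinson theorem stated above plus the (fiberwise) properties already established.
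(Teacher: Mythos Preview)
Your proposal is correct and follows essentially the same approach as the paper: both use the Relative Beilinson Theorem to produce a relative monad from a family, trivialize locally to obtain maps to $\calv_d^{\rm gr}$ that glue on the quotient $\calm_d^{\rm gr}$, and construct the universal family by descending the cohomology of the tautological monad on $\pn\times\calv_d^{\rm gr}$ via the free $G$-action. The only organizational difference is that the paper interposes an explicit coarse-moduli verification (factoring any natural transformation to another scheme $\mathcal{N}$ through $\calm_d^{\rm gr}$) before establishing fineness, whereas you go directly for the functor isomorphism via the universal family; your route is slightly more streamlined, and your identification of the ``main obstacle'' (carrying out the splitting and normalization of Proposition~\ref{prpmyy} in families, with cohomology-and-base-change ensuring local freeness) is exactly the technical heart in both versions.
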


\begin{proof}
We first construct the natural transformation
$$
\mathfrak{M}^{\mathbb{P}^{n}}_{r,c}(\bullet)\longrightarrow\Hom(\bullet,\calmdgr(r,c)),
$$
where $n:=d+2$, between the moduli functor of framed instanton bundles on $\mathbb{P}^{n}$ and the Yoneda functor associated to the solutions to the  $d$-dimensional ADHM equation modulo the action of $G$.

So let $S$ be a scheme and take $[(\mathcal{F},\phi)]\in\mathfrak{M}^{\mathbb{P}^{n}}_{r,c}(S)$. We claim that $\mathcal{F}$ is the cohomology of a monad
\begin{equation}
\label{equsmo}
{\rm M}^{\bullet}:\ \ \ \mathcal{O}_{\mathbb{P}^{n}}(-1)\boxtimes\mathcal{R}^{1}q_{\ast}(\mathcal{F}\otimes\Omega_{\mathbb{P}^{n}\times S/S}^{2}(1))\longrightarrow\mathcal{O}_{\mathbb{P}^{n}}\boxtimes\mathcal{R}^{1}q_{\ast}(\mathcal{F}\otimes\Omega_{\mathbb{P}^{n}\times S/S}^{1})
\end{equation}
$$
\longrightarrow\mathcal{O}_{\mathbb{P}^{n}}(1)\boxtimes \mathcal{R}^{1}q_{\ast}(\mathcal{F}\otimes p^{\ast}\mathcal{O}_{\mathbb{P}^{n}}(-1)).
$$
In fact, by the Relative Beilinson's Theorem, we just use the fact that $\mathcal{F}$ is $S$-flat, so that
$$\mathcal{R}^{j}q_{\ast}(\mathcal{F}\otimes\Omega_{\mathbb{P}^{n}\times S/S}^{-i}(-i))\otimes k(s)\simeq\ho^{j}(\mathbb{P}^{n},\mathcal{F}_{s}\otimes\Omega_{\mathbb{P}^{n}}^{-i}(-i)).
$$
Then (\ref{equsmo}) follows by using the vanishing properties of instanton bundles.

Therefore, on every point $s\in S$, one has a monad
$$
{\rm M}^{\bullet}_s\,:\,\ho^{1}(\mathcal{F}_{s} \otimes\Omega_{\mathbb{P}^{n}}^{2}(1))\otimes\mathcal{O}_{\mathbb{P}^{n}}(-1)
\to
\ho^{1}(\mathcal{F}_{s}\otimes\Omega_{\mathbb{P}^{n}}^{1})\otimes\mathcal{O}_{\mathbb{P}^{n}}
\to
\ho^{1}(\mathcal{F}_{s}(-1))\otimes\mathcal{O}_{\mathbb{P}^{n}}(1).
$$

Now, consider an open covering $\{S_{j}\}_{j\in J}$ of $S.$ Then on every open affine $S_{j}$ the restriction ${\rm M}^{\bullet}|_{S_{j}}$ is isomorphic to a monad of the form
$$
{\rm M}^{\bullet}_j\,:\ \ \ \mathcal{O}_{\mathbb{P}^{n}}(-1)\boxtimes (V\otimes\mathcal{O}_{S_{j}})\stackrel{\alpha_{j}}{\longrightarrow}  \mathcal{O}_{\mathbb{P}^{n}}\boxtimes (\tW\otimes\mathcal{O}_{S_{j}})\stackrel{\beta_{j}}{\longrightarrow}\mathcal{O}_{\mathbb{P}^{n}}(1)\boxtimes (V'\otimes\mathcal{O}_{S_{j}})
$$
where
\begin{align*}
\alpha_{j}: & \ S_{j}\longrightarrow \Hom(\ho^{0}(\mathcal{O}_{\mathbb{P}^{n}}(1))^*, \Hom(V,\tW)) \\
\beta_{j}: & \ S_{j}\longrightarrow \Hom(\ho^{0}(\mathcal{O}_{\mathbb{P}^{n}}(1))^*, \Hom(\tW,V')).
\end{align*}
From the monad condition $\beta_{j}\circ\alpha_{j}=0$ we have a map $f_{j}=(\alpha_{j},\beta_{j}):S_{j}\to\calvdgr$ and by construction these maps satisfy $$f_{i}(s)\sim_{G} f_{j}(s)$$ for any point $s$ in the intersection $S_{i}\cap S_{j}.$ The maps $f_{j}$ glue to form a global morphism
$$
f=f_{[(\mathcal{F},\phi)]}:S\longrightarrow \calmdgr.
$$
This defines the desired natural transformation:
\begin{gather}
\label{natural-transform}
\begin{matrix}
\Phi(\bullet) : & \mathfrak{M}^{\mathbb{P}^{n}}_{r,c}(\bullet) & \longrightarrow & \Hom(\bullet,\calmdgr(r,c))  \\
                             & \chi & \longmapsto     & f_{\chi}:\bullet\longrightarrow \calmdgr .
\end{matrix}
\end{gather}

Taking a closed point $s\in S$, and using the resulting monad on $\mathbb{P}^{n}$ it is easy to see that $\Phi: \mathfrak{M}^{\mathbb{P}^{n}}_{r,c}(\spec k(s))\longrightarrow \Hom(\spec k(s),\calmdgr(r,c))$ is a bijection owing to Theorem \ref{prpoto}.

Finally, take $\mathcal{N}$ to be another parameterizing scheme such that there is a natural transformation
$$
\Psi: \mathfrak{M}^{\mathbb{P}^{n}}_{r,c}(\bullet)\longrightarrow \Hom(\bullet,\mathcal{N}),
$$
and consider the monad
\begin{equation}
\label{equuni}
\mathbb{M}^{\bullet}: \mathcal{O}_{\mathbb{P}^{n}}(-1)\boxtimes (V\otimes\mathcal{O}_{\mathcal{V}^{\rm gr}_d})\to  \mathcal{O}_{\mathbb{P}^{n}}\boxtimes (\tW\otimes\mathcal{O}_{\mathcal{V}^{\rm gr}_d})\to\mathcal{O}_{\mathbb{P}^{n}}(1)\boxtimes (V\otimes\mathcal{O}_{\mathcal{V}^{\rm gr}_d})
\end{equation}
of which the cohomology we call $\mathfrak{F}$. We first claim that the map
$$
\psi:=\Psi(\calvdgr)_{[(\mathfrak{F},\phi)]}: \calvdgr\longrightarrow \mathcal{N}
$$
is constant along the fibers of the natural projection
$$
\pi=\Phi(\calvdgr)_{[(\mathfrak{F},\phi)]}: \calvdgr\longrightarrow \calmdgr
$$
for any framing $\phi$. Actually, the assertion is not particular for the chosen scheme and family. Rather, it easily comes from the fact that $\mathfrak{M}^{\mathbb{P}^{n}}_{r,c}$  is a contravariant family functor and  $\Psi$ and $\Phi$ are natural transformations, that is, the square diagrams obtained fom $\Psi$ and $\Phi$  by pull-backing families and composing morphisms are all commutative.

The projection $\pi:\calvdgr\longrightarrow \calmdgr$ locally has sections, so one can construct local mappings $\bar{\varphi}:\calmdgr\longrightarrow \mathcal{N}$, but since $\psi$ is constant along the fibers of $\pi$, then the map $\bar{\varphi}$ can be lifted to a global map $\varphi$ such that the following diagram commutes:

\begin{equation}
\label{equcom}
\xymatrix@C-0.5pc@R-0.5pc{\calvdgr\ar[r]^{\psi}\ar[d]_{\pi}& \mathcal{N}\\
\calmdgr\ar[ru]_{\varphi}&
}
\end{equation}

Again, from the very naturality of $\Phi$ and $\Psi$ we have that the natural morphism $\varphi:\calmdgr\to\mathcal{N}$ as in (\ref{equcom}) is enough to get a unique natural transformation
$$
\Omega : {\rm Hom}(\bullet,\calmdgr)\longrightarrow{\rm Hom}(\bullet,\mathcal{N})
$$
such that $\Psi=\Omega\circ\Phi$. Hence $\mathcal{M}^{\mathbb{P}^{n}}_{r,c}$ is a coarse moduli space.

To finish the proof, we shall now descend the universal monadic description on $\mathbb{P}^{n}\times \mathcal{V}^{\rm gr}_{d}$ to a well behaved monadic description on $\mathbb{P}^{n}\times\mathcal{M}^{\rm gr}_{d}$. This can be realized due to the fact that the space $\mathbb{P}^{n}\times \mathcal{V}^{\rm gr}_{d}$ is a $G$-space since there is a natural action
\begin{gather}
\begin{matrix}
G\times\mathbb{P}^{n}\times \mathcal{V}^{\rm gr}_{d} & \longrightarrow & \mathbb{P}^{n}\times \mathcal{V}^{\rm gr}_{d}  \\
(g,(x,X)) & \longmapsto     & (x,g\cdot X)
\end{matrix}
\end{gather}
This induces a $G$-action on the universal monad $\mathbb{M}$, in \eqref{equuni}, which descends to an action on its cohomology $\mathfrak{F}$, but since the action is free and the isotropy subgroup is trivial at all points owing to Proposition \ref{Sh1}, we have a well defined family $\mathfrak{F}/G\longrightarrow\mathbb{P}^{n}\times \mathcal{V}^{\rm gr}_{d}/G$. We put $\mathfrak{U}:=\mathfrak{F}/G$ which is a canonical family $$\mathfrak{U}\longrightarrow\mathbb{P}^{n}\times\mathcal{M}^{\rm gr}_{d}$$ parameterized by $\mathcal{M}^{\rm gr}_{d}$.

Finally, we claim that for any noetherian  scheme $S$ of finite type, the mapping
$$\begin{array}{cccc}\Hom(S,\mathcal{M}^{gr}_{d}) & \longrightarrow & \mathfrak{M}^{\mathbb{P}^{n}}_{(r,c)}(S)\\
\phi & \longmapsto & \phi^{\ast}[\mathfrak{U}]=[({\rm id}_{\mathbb{P}^{n}}\times\phi)^{\ast}\mathfrak{U}]\end{array}$$ is bijective.

In fact, for injectivity, if there are homomorphisms $\phi_{1}, \phi_{2}: S\longrightarrow\mathcal{M}^{\rm gr}_{d}$ such that $({\rm id}_{\mathbb{P}^{n}}\times\phi_{1})^{\ast}\mathfrak{U}\cong({\rm id}_{\mathbb{P}^{n}}\times\phi_{2})^{\ast}\mathfrak{U}$
then for every point $s\in S$, one has the equality $\mathfrak{U}(\phi_{1}(s))=\mathfrak{U}(\phi_{2}(s))$. Since the bundle $\mathfrak{U}(\phi_{i}(s))$ is the one given by the globally regular ADHM data associated to the point $\phi_{i}(s)\in \mathcal{M}^{\rm gr}_{d}$, then $\phi_{1}(s)=\phi_{2}(s)$ for every point $s\in S$, thus $\phi_{1}=\phi_{2}.$

For surjectivity, given a family $\mathcal{F}$ parameterized by $S,$ one has the morphism $\phi=\Phi(\mathcal{F})$ given by the natural transformation \eqref{natural-transform}. Then $\mathcal{F}$ is the pull-back of the family $\mathfrak{U}$ parameterized by $\mathcal{M}^{\rm gr}_{d}$. Hence $\mathcal{M}^{\rm gr}_{d}$ is a fine moduli space.
\end{proof}


\section{Perverse instanton sheaves}\label{perverse}

We will conclude this paper by providing a geometrical interpretation for arbitrary solutions of the ADHM equation as perverse coherent sheaves on $\yy$. Indeed, as remarked in Section \ref{secfra}, arbitrary solutions of the ADHM equation give rise to the complex of sheaves (\ref{monad.pn}) which, thought as an object of the derived category $D^{\rm b}(\yy)$, is a perverse coherent sheaf with very particular properties.


\subsection{t-structures and perverse sheaves}

Let $\calt$ be a triangulated category. We recall from \cite{GM} that a \emph{t-structure} on $\calt$ consists of two full subcategories, denoted $D^{\le0}$ and $D^{\ge0}$ satisfying the following conditions:
\begin{itemize}
\item[(i)] $D^{\le0} \subset D^{\le0}[-1]$ and $D^{\ge0}[-1] \subset D^{\ge0}$;
\item[(ii)] if $\mathcal{X}\in D^{\le0}$ and $\mathcal{Z}\in D^{\ge0}$, then $\hom_{\mathcal T}(\mathcal{X},\mathcal{Z})=0$;
\item[(iii)] for every $\mathcal{Y}\in\calt$, there exists an exact triangle $\mathcal{X}\to \mathcal{Y}\to \mathcal{Z}\to \mathcal{X}[1]$ with $\mathcal{X}\in D^{\le0}$ and $\mathcal{Z}\in D^{\ge0}$.
\end{itemize}

The full subcategory of $\calt$ consisting of those objects in $D^{\le0}\cap D^{\ge0}$ is called the \emph{core} of the t-structure $(D^{\le0},D^{\ge0})$; one can show that it is always an abelian category, see \cite{GM}.

If $\calt$ is the (bounded) derived category of an abelian category $\cala$, then one can define the so-called standard t-structure:
$$
{}^{\rm std}D^{\le0} = \{ C^\bullet \in D(\cala) ~|~ \calh^k(C^\bullet)=0 ~ \forall k>0 \}\
$$
$$
{}^{\rm std}D^{\ge0} = \{ C^\bullet \in D(\cala) ~|~ \calh^k(C^\bullet)=0 ~ \forall k<0 \}.
$$
One easily checks that $({}^{\rm std}D^{\le0},{}^{\rm std}D^{\ge0})$ is indeed a t-structure, and that its core is equivalent to $\cala$.

The main goal of this section is to outline two methods of construction of non-standard t-structures on derived categories of coherent sheaves on projective varieties. From now on, let $\xx$ denote a non-singular irreducible projective variety over an algebraically closed field.


\subsection{Kashiwara's t-structures}\label{kashiwara}

Let ${\rm Mod}(\ox)$ denote the abelian category of sheaves of $\ox$-modules, and set $D(\ox)$ to be its derived category; let also $D(\xx)$ to be the derived category of ${\rm Coh}(\xx)$. As usual, we set $D_{\rm qc}(\ox)$ ($D_{\rm coh}(\ox)$) to be the full triangulated subcategory of $D(\ox)$ consisting of complexes with quasi-coherent (coherent) cohomology. Recall that $D^{\rm b}(\xx)$ is naturally equivalent to $D_{\rm coh}^{\rm b}(\ox)$.

We will also use the costumary notation $D_{\rm qc}^{\le n}(\ox)$ to mean complexes $C^\bullet$ in $D_{\rm qc}(\ox)$ such that $\calh^k(C^\bullet)=0$ for all $k>n$; similarly, $D_{\rm qc}^{\ge n}(\ox)$ means complexes $C^\bullet$ in $D_{\rm qc}(\ox)$ such that $\calh^k(C^\bullet)=0$ for all $k<n$. The curly cohomology $\calh$ is used to denote the cohomology of a complex in $D(\ox),$ which is an $\mathcal{O}_{\xx}-$module, while the straight $\ho$ denotes the cohomology with respect to the global sections functor $\Gamma$, in the category of vector spaces.

A \emph{family of supports} on $\xx$ is a set $\Phi$ of closed subsets of $\xx$ satisfying the following conditions: (i) if $Z\in\Phi$ and $Z'$ is a closed subset of $Z$, then $Z'\in\Phi$; (ii) if $Z,Z'\in\Phi$, then $Z\cup Z'\in\Phi$; (iii) $\emptyset\in\Phi$. For any such family of supports, consider the functor $\Gamma_{\Phi}:{\rm Mod}(\ox) \to {\rm Mod}(\ox)$ defined as follows:
$$ \Gamma_{\Phi}(F) := \lim_{Z\in\Phi} \Gamma_Z(F). $$
Then one has, for each open subset $U\subset \xx$:
\begin{equation}\label{gamma}
\Gamma_{\Phi}(F)(U) = \{ \sigma\in F(U) ~|~ \overline{{\rm supp}\sigma} \in \Phi \}.
\end{equation}

A \emph{support datum} on $\xx$ is a decreasing sequence $\mathbf{\Phi}:=\{\Phi^n\}_{n\in\Z}$ of families of supports satisfying the following conditions: (i) for $n\ll0$, $\Phi^n$ is the set of all closed subsets of $X$; (ii) for $n\gg0$, $\Phi^n=\{\emptyset\}$.

Finally, the \emph{support} (or \emph{perversity}) \emph{function} associated to the support datum $\mathbf{\Phi}$ (see \cite[Lem. 5.5]{Ka}) is:
\begin{gather*}
\begin{matrix}
p_{\mathbf{\Phi}}: & \xx_{\rm top} & \longrightarrow & \Z  \\
                             & x & \longmapsto     & {\rm max}\{ n\in\Z ~|~ \overline{\{x\}}\in\Phi^n \}
\end{matrix}
\end{gather*}
where $\xx_{\rm top}$ denotes the topological space underlying the natural scheme structure on the variety $\xx$.

Given a support datum on $\xx$, Kashiwara introduces the following subcatgories of $D^{\rm b}_{\rm qc}(\ox)$:
$$
{}^{\mathbf{\Phi}}D_{\rm qc}^{\le n}(\ox) := \left\{ C^\bullet \in D^{\rm b}_{\rm qc}(\ox) ~|~ {\rm supp}(\calh^k(C^\bullet)) \in \Phi^{k-n} ~\forall k \right\}\ \ \ \ \,
$$
$$
{}^{\mathbf{\Phi}}D_{\rm qc}^{\ge n}(\ox) := \left\{ C^\bullet \in D^{\rm b}_{\rm qc}(\ox) ~|~ R\Gamma_{\Phi^k}(C^\bullet) \in D^{\ge k+n}(\ox) ~\forall k \right\}.
$$
Now consider as in \cite[p. 857]{Ka}:
$$
{}^{\mathbf{\Phi}}D_{\rm coh}^{\le 0}(\ox) := {}^{\mathbf{\Phi}}D_{\rm qc}^{\le 0}(\ox) \cap D^{\rm b}(\xx)\,
$$
$$
{}^{\mathbf{\Phi}}D_{\rm coh}^{\ge 0}(\ox) := {}^{\mathbf{\Phi}}D_{\rm qc}^{\ge 0}(\ox) \cap D^{\rm b}(\xx).
$$
It is shown in \cite[Thm. 5.9]{Ka} that if the support function $p_\Phi$ satisfies the following condition
\begin{equation}\label{pervcond}
p_{\mathbf{\Phi}}(y) - p_{\mathbf{\Phi}}(x) \le \codim(\overline{\{y\}}) - \codim(\overline{\{x\}})~~ \forall y\in \overline{\{x\}},
\end{equation}
then $( {}^{\mathbf{\Phi}}D_{\rm coh}^{\le 0}(\ox),{}^{\mathbf{\Phi}}D_{\rm coh}^{\ge 0}(\ox) )$ defines a $t$-structure on $D^{\rm b}(\xx)$.

\begin{example}
\label{ex1}
\emph{For the scheme $\yy$ with the line $\ell$, consider the following support datum $\mathbf{\Phi}=\{\Phi^k\}_{k\in\Z }$ with
\begin{align*}
\Phi^k &:= \{ \rm all~closed~subsets~of~\yy \} ~~ {\rm for} ~ k\le0 \\
\Phi^1 &:= \{ \rm all~closed~subsets~of~\yy ~which~do~not~intersect~\ell \} \\
\Phi^k &:= \{ \emptyset \}  ~~ {\rm for} ~ k\ge2.
\end{align*}
The corresponding perversity function $p_\Phi : \yy_{\rm top} \to \Z$ is given by: $p_{\mathbf{\Phi}}(x)=0$ if and only if $\overline{\{x\}}\cap\ell\ne\emptyset$ and $p_{\mathbf{\Phi}}(x)=1$ otherwise. One easily checks that such function does satisfy the condition (\ref{pervcond}). We will denote by $\calc_\yy$ the core of this t-structure.}
\end{example}

The objects of $\calc_\yy$ can be characterized as follows.

\begin{proposition}\label{calc}
$C^\bullet\in \calc_\yy$ if and only if the following hold:
\begin{itemize}
\item[(i)] $\calh^k(C^\bullet)=0$ for $k\ne0,1$;
\item[(ii)] $\calh^0(C^\bullet)$ has all nonzero sections with support intersecting $\ell$;
\item[(iii)] $\calh^1(C^\bullet)$ is supported away from $\ell$.
\end{itemize}
\end{proposition}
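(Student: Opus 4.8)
The plan is to unwind the definitions of the two half-categories ${}^{\mathbf{\Phi}}D_{\rm coh}^{\le0}(\ox)$ and ${}^{\mathbf{\Phi}}D_{\rm coh}^{\ge0}(\ox)$ for the specific support datum $\mathbf{\Phi}$ of Example \ref{ex1}, and to show that intersecting them yields exactly conditions (i)--(iii). First I would record what membership in the support families means: a coherent sheaf $F$ lies in $\Phi^0$ for free (every closed subset is allowed), lies in $\Phi^1$ iff $\operatorname{supp}(F)\cap\ell=\emptyset$, and lies in $\Phi^k$ for $k\ge2$ iff $F=0$. By \eqref{gamma}, $\Gamma_{\Phi^0}(F)=F$, $\Gamma_{\Phi^1}(F)$ is the subsheaf of sections whose support closure misses $\ell$, and $\Gamma_{\Phi^k}(F)=0$ for $k\ge2$.

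Next I would treat the two conditions separately. For $C^\bullet\in{}^{\mathbf{\Phi}}D_{\rm coh}^{\le0}(\ox)$: the defining condition is $\operatorname{supp}(\calh^k(C^\bullet))\in\Phi^k$ for all $k$. For $k\ge2$ this forces $\calh^k(C^\bullet)=0$; for $k=1$ it forces $\operatorname{supp}(\calh^1(C^\bullet))\cap\ell=\emptyset$, i.e.\ condition (iii); for $k\le0$ it is automatic. (That $C^\bullet$ is bounded below, hence $\calh^k=0$ for $k\ll0$, is built into $D^{\rm b}$.) For $C^\bullet\in{}^{\mathbf{\Phi}}D_{\rm coh}^{\ge0}(\ox)$: the condition is $R\Gamma_{\Phi^k}(C^\bullet)\in D^{\ge k}(\ox)$ for all $k$. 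For $k\le0$ this says $R\Gamma(C^\bullet)\in D^{\ge 0}$, hence $\calh^k(C^\bullet)=0$ for $k<0$; combined with the $\le0$ side this gives $\calh^k(C^\bullet)=0$ for $k\ne0,1$, which is condition (i). For $k=1$ it says $R\Gamma_{\Phi^1}(C^\bullet)\in D^{\ge1}$, i.e.\ $\calh^0(R\Gamma_{\Phi^1}(C^\bullet))=0$; for $k\ge2$ the functor $\Gamma_{\Phi^k}$ is identically zero so nothing new is said. The heart of the argument is thus to show that, granting (i) and (iii), the vanishing $\calh^0(R\Gamma_{\Phi^1}(C^\bullet))=0$ is equivalent to condition (ii): $\calh^0(C^\bullet)$ has no nonzero section supported away from $\ell$.

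To make that last step precise I would use the hyper-derived-functor spectral sequence (or simply the fact that $C^\bullet$ is, up to quasi-isomorphism, represented by a bounded complex with $\calh^k=0$ for $k<0$, so truncation gives an exact triangle $\calh^0(C^\bullet)\to C^\bullet\to\tau^{\ge1}C^\bullet$). Applying $R\Gamma_{\Phi^1}$ and taking the long exact sequence of $\calh^*$, the term $\calh^0(R\Gamma_{\Phi^1}(C^\bullet))$ receives $\calh^0(R\Gamma_{\Phi^1}(\calh^0(C^\bullet)))=\Gamma_{\Phi^1}(\calh^0(C^\bullet))$ and is, because $R\Gamma_{\Phi^1}$ has no negative cohomology, exactly equal to $\Gamma_{\Phi^1}(\calh^0(C^\bullet))$ (the contribution of $\tau^{\ge1}C^\bullet$ sits in degrees $\ge1$, hence does not affect $\calh^0$). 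By \eqref{gamma}, $\Gamma_{\Phi^1}(\calh^0(C^\bullet))=0$ precisely says $\calh^0(C^\bullet)$ has no nonzero section with support closure disjoint from $\ell$, which is (ii). Conversely, if (i), (ii), (iii) hold, running the same identifications backwards shows $C^\bullet$ lies in both half-categories, hence in $\calc_\yy$.

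The main obstacle I anticipate is the bookkeeping in the $\ge0$ direction: one must be careful that $R\Gamma_{\Phi^k}$ for the various $k$ are genuinely interrelated (the $\Phi^k$ form a decreasing sequence, so $\Gamma_{\Phi^1}\subset\Gamma_{\Phi^0}=\mathrm{id}$) and that the condition ``$R\Gamma_{\Phi^k}(C^\bullet)\in D^{\ge k}$'' for $k=0$ is what kills $\calh^k$ for $k<0$ while the $k=1$ instance is the only one contributing a genuinely new constraint; all $k\ge2$ instances are vacuous. A secondary point requiring care is justifying that it suffices to check the defining conditions on cohomology sheaves rather than on the complex directly --- this is where one invokes that $\calh^0(R\Gamma_{\Phi^1}(-))$ is left-exact and commutes with the truncation triangle in the relevant range. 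Once these are laid out the equivalence is a formal consequence, so I would not belabor the individual sheaf-theoretic verifications.
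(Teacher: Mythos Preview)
Your proposal is correct and follows essentially the same route as the paper: unwind the $\le0$ side to get (iii) and $\calh^k=0$ for $k\ge2$, then use $\Gamma_{\Phi^0}=\mathrm{id}$ to get $\calh^k=0$ for $k<0$, and finally identify $\calh^0(R\Gamma_{\Phi^1}(C^\bullet))$ with $\Gamma_{\Phi^1}(\calh^0(C^\bullet))$ to obtain (ii). The only difference is cosmetic: the paper invokes \cite[Lem.~3.3.(iii)]{Ka} for that last identification, whereas you derive it directly from the truncation triangle $\calh^0(C^\bullet)\to C^\bullet\to\tau^{\ge1}C^\bullet$ and the fact that $R\Gamma_{\Phi^1}$ sits in nonnegative degrees --- which is precisely the content of Kashiwara's lemma in this situation.
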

\begin{proof}
We first assume that $C^\bullet\in \calc_\yy$. On the one hand, $C^\bullet \in {}^{\mathbf{\Phi}}D_{\rm coh}^{\le 0}(\oh_{\yy})$. Then ${\rm supp}(\calh^k(C^\bullet))$ is empty for $k\ge2$, i.e., $\calh^k(C^\bullet)=0$ for $k\ge2$, and also ${\rm supp}(\calh^1(C^\bullet))$ does not intersect $\ell$.

On the other hand, $C^\bullet \in {}^{\mathbf{\Phi}}D_{\rm coh}^{\ge 0}(\oh_{\yy})$. Then, first, $R\Gamma_{\Phi^0}(C^\bullet) \in D^{\ge 0}(\oh_{\yy})$. But $\Gamma_{\Phi^0}$ is just the identity functor on ${\rm Mod}(\oh_{\yy})$, thus $R\Gamma_{\Phi^0}(C^\bullet)=C^\bullet\in D^{\ge 0}(\oh_{\yy})$, i.e, $\calh^k(C^\bullet)=0$ for $k\leq -1$. Besides, $R\Gamma_{\Phi^1}(C^\bullet)\in D^{\ge 1}(\oh_{\yy})$ and hence $\calh^0(R\Gamma_{\Phi^1}(C^\bullet))$ vanishes. But, by \cite[Lem. 3.3.(iii)]{Ka}, $\calh^0(R\Gamma_{\Phi^1}(C^\bullet))=\Gamma_{\Phi^1}(\calh^0(C^\bullet))$ since $C^\bullet \in D^{\ge 0}(\oh_{\yy})$. Therefore $\Gamma_{\Phi^1}(\calh^0(C^\bullet))=0$ which is equivalent to saying (ii).

Conversely, let us first check that $C^\bullet\in{}^{\mathbf{\Phi}}D_{\rm coh}^{\le 0}(\oh_{\yy})$. This is quite clear, since ${\rm supp}(\calh^1(C^\bullet)) \in \Phi^{1}$ by (iii) and ${\rm supp}(\calh^k(C^\bullet))=\emptyset \in \Phi^{k}$ for $k\ge2$ by (i).

To check that $C^\bullet\in{}^{\mathbf{\Phi}}D_{\rm coh}^{\ge 0}(\oh_{\yy})$, since $C^\bullet \in D^{\ge 0}(\oh_{\yy})$ we use  \cite[Lem. 3.3.(iii)]{Ka} again. It gets $R\Gamma_{\Phi^k}(C^\bullet) \in D^{\ge 0}(\oh_{\yy})$ and $\calh^0(R\Gamma_{\Phi^k}(C^\bullet))=\Gamma_{\Phi^k}(\calh^0(C^\bullet))$ for every $k$. So, first, $R\Gamma_{\Phi^k}(C^\bullet) \in D^{\ge 0}(\oh_{\yy})\subset D^{\ge k}(\oh_{\yy})$ for every $k\le 0$. Besides, we have $R\Gamma_{\Phi^1}(C^\bullet) \in D^{\ge 0}(\oh_{\yy})$ and  $\calh^0(R\Gamma_{\Phi^1}(C^\bullet))=\Gamma_{\Phi^1}(\calh^0(C^\bullet))$ which vanishes by (ii); hence $R\Gamma_{\Phi^1}(C^\bullet) \in D^{\ge 1}(\oh_{\yy})$. Since $R\Gamma_{\Phi^k}(C^\bullet)=0$ for every $k\ge2$ ($\Phi^k=\{\emptyset\}$ in this range), we also have that $R\Gamma_{\Phi^k}(C^\bullet) \in D^{\ge k}(\oh_{\yy})$ for $k\ge 2$.
\end{proof}


\subsection{Tilting on torsion pairs}\label{tilting}

Let $\cala$ be an abelian category, and let $(\calt,\calf)$ be a pair of full subcategories of $\cala$. One says that $(\calt,\calf)$ is a torsion pair in $\cala$ if the following conditions are satisfied:
\begin{itemize}
\item[(i)] $\rm{Hom}_{\cala}(T,F)=0$ whenever $T\in\calt$ and $F\in\calf$;
\item[(ii)] For every $A\in\cala$, there is a short exact sequence $0\to T\to A\to F\to 0$ with $T\in\calt$ and $F\in\calf$.
\end{itemize}

Now let $(\calt,\calf)$ be a torsion pair in $\cala$. Consider the full subcategories of $D^{\rm b}(\cala)$:
$$
D^{\le0} := \{ C^\bullet\in D^{\rm b}(\cala) ~|~ \calh^p(C^\bullet)=0 ~{\rm for}~p>0~{\rm and}~\calh^0(C^\bullet)\in\calt \}\ \ \ \ \
$$
$$
D^{\ge0} := \{ C^\bullet\in D^{\rm b}(\cala) ~|~ \calh^p(C^\bullet)=0 ~{\rm for}~p<-1~{\rm and}~\calh^{-1}(C^\bullet)\in\calf \}.
$$
According to \cite[Prp. 2.1 and Cor. 2.2]{HRS}, we have that $(D^{\le0},D^{\ge0})$ is a t-structure on $D^{\rm b}(\cala)$, and the full subcategories $(\calf[1],\calt)$ is a torsion pair in its core. In this situation, one says that the t-structure (and its core) is obtained from $\cala$ through \emph{tilting} on the torsion pair $(\calt,\calf)$.

\begin{example}
\label{ex2}
\emph{For the scheme $\yy$, take $\cala=\rm{Coh}(\yy)$; given a coherent sheaf $E$ on $\yy$, let $T_k(E)$ be the maximal subsheaf of $E$ whose support has dimension at most $k$, see \cite[p. 3]{HL}. Consider the following full subcategories of $\rm{Coh}(\yy)$:
$$
\calt: = \{ E\in{\rm Coh}(\yy) ~|~ T_{n-2}(E)=E \}
$$
$$
\calf: = \{ E\in{\rm Coh}(\yy) ~|~ T_{n-2}(E)=0 \}.
$$
One easily checks that they form a torsion pair in $\rm{Coh}(\yy)$. Let $\calb$ be the core of the t-structure obtained from $\rm{Coh}(\yy)$ through tilting on $(\calt,\calf)$. We set
$$
\calc'_{\yy}:=\calb[1]
$$}
\end{example}

The objects of $\calc'_{\yy}$ can easily be characterized as follows.

\begin{proposition}
\label{calc'}
$C^\bullet\in \calc'_\yy$ if and only if the following hold:
\begin{itemize}
\item[(i)] $\calh^p(C^\bullet)=0$ for $p\ne0,1$;
\item[(ii)] $\calh^0(C^\bullet)$ has no subsheaves supported in codimension at least $2$;
\item[(iii)] $\calh^1(C^\bullet)$ is supported in codimension at least $2$.
\end{itemize}
\end{proposition}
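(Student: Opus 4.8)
The plan is to read the description of the core $\calb$ straight off the Happel--Reiten--Smal\o{} construction recalled in Subsection~\ref{tilting}, and then transport it through the shift by $[1]$. By \cite[Prp. 2.1 and Cor. 2.2]{HRS}, the core $\calb = D^{\le0}\cap D^{\ge0}$ of the t-structure attached to the torsion pair $(\calt,\calf)$ consists precisely of the $C^\bullet\in D^{\rm b}({\rm Coh}(\yy))$ with $\calh^p(C^\bullet)=0$ for $p\ne -1,0$, with $\calh^0(C^\bullet)\in\calt$ and $\calh^{-1}(C^\bullet)\in\calf$; this is immediate once one intersects the two full subcategories written out in Subsection~\ref{tilting}.

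First I would unwind the shift: $C^\bullet\in\calc'_{\yy}=\calb[1]$ if and only if $C^\bullet[-1]\in\calb$, and translating the degree conditions on $C^\bullet[-1]$ back to $C^\bullet$ gives $\calh^p(C^\bullet)=0$ for $p\ne 0,1$, with $\calh^1(C^\bullet)\in\calt$ and $\calh^0(C^\bullet)\in\calf$. Then it remains to recognize $\calt$ and $\calf$: by Example~\ref{ex2}, $E\in\calt$ means $T_{n-2}(E)=E$, i.e. $E$ is supported in codimension at least $2$, so $\calh^1(C^\bullet)\in\calt$ is exactly (iii); and $E\in\calf$ means $T_{n-2}(E)=0$, i.e. $E$ has no nonzero subsheaf supported in codimension at least $2$, so $\calh^0(C^\bullet)\in\calf$ is exactly (ii). Reading this chain of equivalences in both directions establishes both implications of the proposition, just as in the proof of Proposition~\ref{calc}.

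There is essentially no obstacle here: the whole content sits in the HRS description of a tilted core, which is quoted, and what remains is a one-line unwinding of the shift together with the dictionary between the torsion classes $(\calt,\calf)$ and the support conditions (ii)--(iii). The only thing to be careful about is the bookkeeping with $[1]$ --- making sure the torsion piece $\calt$ lands in cohomological degree $1$ and the torsion-free piece $\calf$ in degree $0$ --- and recalling that $T_{n-2}(E)=E$ (resp. $T_{n-2}(E)=0$) is the same as saying $E$ is supported in codimension $\ge 2$ (resp. has no such subsheaf).
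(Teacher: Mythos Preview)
Your argument is correct and is precisely the straightforward unwinding the paper intends: the paper gives no proof for this proposition, merely prefacing it with ``can easily be characterized as follows.'' What you wrote---intersecting $D^{\le0}$ and $D^{\ge0}$ from the HRS tilt, transporting through the shift, and identifying membership in $\calt$ and $\calf$ with the codimension conditions (ii)--(iii)---is exactly that easy verification.
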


We remark that the two t-structures obtained in Examples \ref{ex1} and \ref{ex2} are distinct, and so are their cores. For instance, $\mathcal{O}_{\ell}[1]$ is an object of $\calc'_\yy$, but not of $\calc_\yy$; on the other hand, the sheaf $\mathcal{O}_{\ell}$ is an object in $\calc_\yy$, but not of $\calc'_\yy$.


\subsection{Perverse instanton sheaves}

Broadly speaking, a perverse coherent sheaf on an algebraic variety $\mathbb{X}$ is an object within the core of some t-structure on $D^{\rm b}(\mathbb{X})$. Therefore, motivated by the above examples, we introduce the following definition.

\begin{definition}\label{pervdef}
A \emph{perverse (coherent) sheaf} on $\yy$ is a complex $C^\bullet\in D^b(\yy)$ satisfying the following conditions:
\begin{itemize}
\item[(i)] $\calh^p(C^\bullet)=0$ for $p\ne0,1$;
\item[(ii)] $\calh^0(C^\bullet)$ is a torsion free sheaf;
\item[(iii)] $\calh^1(C^\bullet)$ is a torsion sheaf supported away from a line $\ell$.
\end{itemize}
The \emph{rank} $r$ of $C^\bullet$ is defined to be the rank of $\calh^0(C^\bullet)$.
\end{definition}

Let $\calp_{\yy}$ denote the category of perverse sheaves on $\yy$, as a full subcategory of $D^b(\yy)$. It is easy to see that $\calp_{\yy}$ is additive, closed under direct summands and closed under extensions.
Moreover, $\calp_{\yy}$ is contained both in $\calc_{\yy}$ and in $\calc'_{\yy}$, and it contains the category of torsion-free sheaves on $\yy$ as a subcategory.

Let $\calf_{\yy}$ be the category of torsion free sheaves on $\yy$ as a subcategory of  $\calp_{\yy}$, i.e., $F^\bullet\in\calf_{\yy}$ if $\calh^1(F^\bullet)=0$, and let $\calz_{\yy}$ be the category of rank zero perverse sheaves, i.e., $Z^\bullet\in\calz_{\yy}$ if $\calh^0(Z^\bullet)=0$. Note that $(\calf_{\yy},\calz_{\yy})$ is a torsion pair in $\calp_{\yy}$; in particular, for every $C^\bullet\in\calp_{\yy}$, there is a short exact sequence
$$ 0 \to F^\bullet \to C^\bullet \to Z^\bullet \to 0 $$
with $F^\bullet\in\calf_{\yy}$ and $Z^\bullet\in\calz_{\yy}$.

One can then extend Definition \ref{defins} from coherent to perverse sheaves.

\begin{definition}
An object $C^{\bullet}$ in ${\rm Kom}(\yy)$ is said to be a \emph{perverse instanton sheaf} if it is quasi-isomorphic to a complex of the form
$$
\oh_{\yy}(-1)^{\oplus c} \longrightarrow
\oh_{\yy}^{\oplus a} \longrightarrow \oh_{\yy}(1)^{\oplus c}
$$
such that $Lj^{\ast}C^{\bullet}$ is a sheaf object, where $j:\ell\hookrightarrow\yy$
is the inclusion. If the sheaf object $Lj^{\ast}C^{\bullet}$ on $\ell$ is trivial, then $C^{\bullet}$ is called of trivial splitting type. A \emph{framed perverse instanton sheaf} is the pair $(C^{\bullet},\phi)$ consisting of a perverse instanton sheaf $C^{\bullet}$ of trivial splitting type and a framing $\phi: Lj^{\ast}C^{\bullet} \simto \oh_\ell^{\oplus r}$.
\end{definition}

We point out that perverse instanton sheaves may fail to be perverse sheaves. In fact, if $X$ is a datum in the ADHM variety then $E_X^{\bullet}$ is, by construction, a perverse instanton sheaf, but if $X$ is degenerated then $\mathcal{H}^0(E_X^{\bullet})$ is not torsion free, in particular $E_X^{\bullet}$ is not a perverse sheaf, and we have seen in Remark \ref{remdeg} examples of degenerated data. On the other hand, if for instance ${\rm Pic}(\yy)=\mathbb{Z}$ then one may adjust the proof of \cite[Prp. 2.7]{HL} to conclude that perverse instanton sheaves on $\yy$ are always perverse sheaves.

\begin{proposition}
\label{surjective}
Let $\yy$ be such that ${\rm Pic}(\yy)=\mathbb{Z}$, then a complex is a perverse instanton sheaf of trivial splitting type on $\yy$ if and only if it is quasi-isomorphic to an ADHM one.
\end{proposition}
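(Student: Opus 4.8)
The implication ``$\Leftarrow$'' is immediate: if $X\in\calv_\yy$, then $E^\bullet_X$ is a bounded complex of locally free sheaves of the shape $\oh_\yy(-1)^{\oplus c}\to\oh_\yy^{\oplus(2c+r)}\to\oh_\yy(1)^{\oplus c}$, hence $Lj^{\ast}E^\bullet_X\simeq j^{\ast}E^\bullet_X=E^\bullet_X|_\ell$, and the computation carried out in the proof of Proposition \ref{l4}(i) shows that this restriction is quasi-isomorphic to $W\otimes\oh_\ell$ concentrated in degree $0$; thus $E^\bullet_X$ is a perverse instanton sheaf of trivial splitting type. For the converse, the plan is to adapt the inverse construction from the proof of Proposition \ref{prpmyy} (the scheme of \cite[Prp. 2.7]{HL}), weakening the torsion-freeness hypothesis used there to the mere fact that $Lj^{\ast}C^\bullet$ is a trivial sheaf object.

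So, given a perverse instanton sheaf $C^\bullet$ of trivial splitting type, I would first fix a quasi-isomorphism $C^\bullet\simeq K^\bullet:=[\,\oh_\yy(-1)^{\oplus c}\xrightarrow{\alpha}\oh_\yy^{\oplus a}\xrightarrow{\beta}\oh_\yy(1)^{\oplus c}\,]$ placed in degrees $-1,0,1$. Since $K^\bullet$ consists of locally free sheaves, $Lj^{\ast}C^\bullet$ is represented by $j^{\ast}K^\bullet=[\,\oh_\ell(-1)^{\oplus c}\xrightarrow{\alpha_\ell}\oh_\ell^{\oplus a}\xrightarrow{\beta_\ell}\oh_\ell(1)^{\oplus c}\,]$, and the triviality of this as a sheaf object forces $\ker\alpha_\ell=0$, ${\rm coker}\,\beta_\ell=0$ and $\ker\beta_\ell/{\rm im}\,\alpha_\ell\cong\oh_\ell^{\oplus r}$ with $r={\rm rk}\,C^\bullet$; a rank count on $\ell\cong\p1$ then forces $a=2c+r$. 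Next I would observe that since $\alpha_\ell$ has rank $c$ at the generic point of $\ell$, a suitable $c\times c$ minor of the matrix of $\alpha$ does not vanish on $\yy$, so $\alpha$ is generically injective; as $\yy$ is integral (this is where ${\rm Pic}(\yy)=\mathbb{Z}$ enters, in accordance with Remark \ref{remdeg} and the accompanying observation that under this hypothesis perverse instanton sheaves are honest perverse sheaves), the subsheaf $\ker\alpha$ of the torsion-free sheaf $\oh_\yy(-1)^{\oplus c}$ is torsion, hence zero. Therefore $\calh^{-1}(C^\bullet)=0$ and the restriction of $K^\bullet$ to $\ell$ is a genuine monad on $\p1$ with locally free cohomology $\oh_\ell^{\oplus r}$.

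From here the $\p1$-linear-algebra of the proof of Proposition \ref{prpmyy} would run verbatim: writing $\alpha_\ell=\alpha_1x+\alpha_2y$ and $\beta_\ell=\beta_1x+\beta_2y$, the relations forced by $\beta_\ell\alpha_\ell=0$, the vanishings $\ho^p(\oh_\ell(-1))=0$ for $p=0,1$, and the triviality $\ker\beta_\ell/{\rm im}\,\alpha_\ell\cong\oh_\ell^{\oplus r}$ give $0\to W\to U\xrightarrow{\beta_1\oplus\beta_2}V\oplus V\to0$ with $W\cong\ho^0(\ker\beta_\ell)$; dualizing the monad on $\ell$ — which is legitimate precisely because its cohomology $\oh_\ell^{\oplus r}$ is locally free, and this is what replaces the local-freeness-near-$\ell$ input of Proposition \ref{prpmyy} — and repeating the argument yields $0\to V\oplus V\xrightarrow{\alpha_1\oplus\alpha_2}U\to W\to0$. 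Evaluating at $P=(0:\dots:0:1:0)\in\ell$ shows that $\beta_1\alpha_2=-\beta_2\alpha_1$ is an isomorphism, so both sequences split and one may pick a basis identifying $U=V\oplus V\oplus W$ and putting $\alpha_1,\alpha_2,\beta_1,\beta_2$ in the standard form of (\ref{alpha}) along $\ell$. Finally, $\alpha$ and $\beta$ being matrices of linear forms whose $x,y$-parts are now standardized, their $\langle z_0,\dots,z_d\rangle$-parts, read off in the decomposition $U=V\oplus V\oplus W$, furnish maps $A',B',J$ (from $\alpha$) and $A,B,I$ (from $\beta$) for which $\alpha,\beta$ coincide with the maps (\ref{alpha}) of the datum $X:=((A,B,I),(A',B',J))\in\bb_\yy$; then $\beta\alpha=0$ is equivalent to $\mu(X)=0$, i.e.\ $X\in\calv_\yy$, and $K^\bullet=E^\bullet_X$, so $C^\bullet\simeq E^\bullet_X$ is an ADHM complex.

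The step I expect to be the main obstacle is the one in the middle: extracting from the single hypothesis ``$Lj^{\ast}C^\bullet$ is a trivial sheaf object'' both the vanishing $\calh^{-1}(C^\bullet)=0$ (so that $\alpha$ is injective on all of $\yy$, and not merely on $\ell$) and the fact that the restriction to $\ell$ is a monad with locally free cohomology — these being exactly the two ingredients that Proposition \ref{prpmyy} derived from torsion-freeness of $E$ and from local freeness of $E$ near $\ell$, hypotheses deliberately dropped here.
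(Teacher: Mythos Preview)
Your proposal is correct and follows the same route as the paper's (very terse) proof: restrict the linear complex to $\ell$ and rerun the $\p1$-linear-algebra of Proposition~\ref{prpmyy} to put $\alpha,\beta$ in ADHM form. The only difference is in how ${\rm Pic}(\yy)=\Z$ is invoked. The paper uses it, via an adaptation of \cite[Prp.~2.7]{HL}, to conclude that $\calh^0(C^\bullet)$ is torsion free and then says ``apply Proposition~\ref{prpmyy} verbatim without obliging the complex to be a monad''; you instead use it only through integrality of $\yy$ to get $\calh^{-1}(C^\bullet)=0$. Your treatment actually clarifies that the step you flag as the ``main obstacle'' is already settled by the hypothesis: since $Lj^\ast C^\bullet\simeq K^\bullet|_\ell$ is a trivial sheaf object, $K^\bullet|_\ell$ is automatically a monad on $\p1$ with locally free cohomology $\oh_\ell^{\oplus r}$, so the dualization step of Proposition~\ref{prpmyy} goes through directly (this is exactly what you say), and the identification $K^\bullet=E^\bullet_X$ then follows from the linear algebra on $\ell$ alone. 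In particular, the global injectivity of $\alpha$ that you work to establish is a consequence of the ADHM form (Proposition~\ref{l1}(i)) rather than a prerequisite for obtaining it.
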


\begin{proof}
Given a complex $C^{\bullet}\in D^{\rm b}(\yy)$, first we assure that $\mathcal{H}^0(C^{\bullet})$ is torsion free using the fact that ${\rm Pic}(\yy)=\mathbb{Z}$ and adjusting \cite[Prp. 2.7]{HL}. Then we apply verbatim the proof of Proposition \ref{prpmyy} without obliging the complex to be a monad.
\end{proof}


\subsection{Functorial point of view}

The correspondence indicated in the previous proposition can also be described in terms of a functor. First, we construct the {\em ADHM category over} $\yy$ which we denote $\mathfrak{A}(\yy)$. The objects of $\mathfrak{A}(\yy)$ are triples $(V,W,X)$ with $X\in \calv_{\yy}(W,V)$, and a morphism
$$
\rho : (V,W,X)\longrightarrow (V',W',X')
$$
consists of two linear maps $f:V\to V'$ and $g:W\to W'$ such that if we write
$$
X=(X_i)_{i=1}^6\in{\rm Hom}(V,V\otimes \ho_\yy)^{\oplus 4}\oplus {\rm Hom}(W,V\otimes \ho_\yy)\oplus{\rm Hom}(V,W\otimes \ho_\yy)
$$
$$
X'=(X_i')_{i=1}^6\in{\rm Hom}(V',V'\otimes \ho_\yy)^{\oplus 4}\oplus {\rm Hom}(W',V'\otimes \ho_\yy)\oplus{\rm Hom}(V',W'\otimes \ho_\yy)
$$
then the diagrams
$$
\xymatrix{ V \ar[r]^{X_i\ \ \ \ \ } \ar[d]^{f} &
\, V\otimes {\rm H}_{\yy} \ar[d]^{f\otimes 1} \\
V' \ar[r]^{X_i'\ \ \ \ \ } & \, V'\otimes {\rm H}_{\yy} }
$$
are commutative for $1\leq i\leq 4$, and the diagrams
$$
\xymatrix{ W \ar[r]^{X_5\ \ \ \ \ } \ar[d]^{g} &
\, V\otimes {\rm H}_{\yy} \ar[d]^{f\otimes 1} \\
W' \ar[r]^{X_5'\ \ \ \ \ } & \, V'\otimes {\rm H}_{\yy} }
\ \ \ \ \ \ \ \ \ \ \ \ \ \ \ \ \ \ \ \
\xymatrix{ V \ar[r]^{X_6\ \ \ \ \ } \ar[d]^{f} &
\, W\otimes {\rm H}_{\yy} \ar[d]^{g\otimes 1} \\
V' \ar[r]^{X_6'\ \ \ \ \ } & \, W'\otimes {\rm H}_{\yy} }
$$
are commutative as well.

Note that $\mathfrak{A}(\p2)$ is the category of representations of the ADHM quiver
$$
\xymatrix{
\stackrel{\bullet}{v} \ar@/^/[d]^{j} \ar@(ul,dl)[]_{a} \ar@(dr,ur)[]_{b} \\
\stackrel{\bullet}{w} \ar@/^/[u]^{i}}
$$
with the relation $ab-ba+ij=0$. It is also possible to use the notion of twisted representations of quivers in the sense \cite{GK,J-pn} to describe $\mathfrak{A}(\yy)$ for a general $\yy$. Note also that $\mathfrak{A}(\yy)$ is abelian. We denote by $\mathfrak{S}(\yy)$ the full subcategory of $\mathfrak{A}(\yy)$ whose objects are globally weak stable if regarded as ADHM data.

On the other hand, let $\calp{\mathcal I}(\yy)$ denote the full subcategory of $D^b(\yy)$ whose objects are perverse instanton sheaves of trivial splitting type; let also ${\mathcal I}(\yy)$  denote the full subcategory of ${\rm Coh}(\yy)$ consisting of instanton sheaves of trivial splitting type.
In the sequel we want to establish a relation between the categories $\mathfrak{A}(\yy)$ and $\calp{\mathcal I}(\yy)$. In order to do so, we start by a lemma.

\begin{lemma}
\label{lemh00}
Let $\yy$ be such that $\ho^1(\oh_{\yy}(-i))=0$ for $i=1,2$. Let also $\phi :E^{\bullet}\to F^{\bullet}$ be a morphism of ADHM complexes over $\yy$. Then the following hold:
\begin{itemize}
\item[(i)] if $\mathcal{H}^0(F^{\bullet})=\mathcal{H}^1(\phi)=0$ then $\phi=0$;
\item[(ii)] if $\mathcal{H}^1(E^{\bullet})=\mathcal{H}^0(\phi)=0$ then $\phi=0$;
\end{itemize}
\end{lemma}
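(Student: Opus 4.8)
The two statements are dual to each other under the standard transpose operation on ADHM complexes, so it suffices to prove (i) and then obtain (ii) by applying (i) to the transposed complexes (which swaps the roles of $\mathcal{H}^0$ and $\mathcal{H}^1$). I will therefore concentrate on (i): assuming $\mathcal{H}^0(F^\bullet)=0$ and $\mathcal{H}^1(\phi)=0$, I must show the chain map $\phi$ is null-homotopic, i.e.\ zero in the homotopy category. Write $\phi$ componentwise as a triple $(\phi_{-1},\phi_0,\phi_1)$ with $\phi_{-1}:V\otimes\oh_\yy(-1)\to V'\otimes\oh_\yy(-1)$, $\phi_0:(V\oplus V\oplus W)\otimes\oh_\yy\to(V'\oplus V'\oplus W')\otimes\oh_\yy$, and $\phi_1:V\otimes\oh_\yy(1)\to V'\otimes\oh_\yy(1)$, compatible with the monad differentials $\alpha,\beta$ and $\alpha',\beta'$.

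First I would use the hypothesis $\mathcal{H}^1(\phi)=0$. Since $\mathcal{H}^1(E^\bullet)=\mathrm{coker}\,\beta$ and $\beta$ is only a complex differential (not necessarily surjective), the hypothesis says that the induced map $\mathrm{coker}\,\beta\to\mathrm{coker}\,\beta'$ vanishes, equivalently $\phi_1(\mathrm{im}\,\beta)\subseteq\mathrm{im}\,\beta'$, so that $\phi_1\circ\beta = \beta'\circ\psi$ for some sheaf map $\psi:(V\oplus V\oplus W)\otimes\oh_\yy\to(V'\oplus V'\oplus W')\otimes\oh_\yy$; the real content is to extract from this a map $h_1:V\otimes\oh_\yy(1)\to(V'\oplus V'\oplus W')\otimes\oh_\yy$ with $\phi_1=\beta'h_1$ — this uses that $\beta:(V\oplus V\oplus W)\otimes\oh_\yy\to V\otimes\oh_\yy(1)$ is, by the restriction-to-$\ell$ computation already done in the proof of Proposition \ref{prpmyy} (and the vanishing $\ho^1(\oh_\yy(-i))=0$), generically surjective with the relevant $\mathrm{Ext}$/lifting properties, so that a global lift $h_1$ of $\phi_1$ through $\beta'$ exists. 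Replacing $\phi$ by $\phi - (\text{homotopy from }h_1)$, I reduce to the case $\phi_1=0$.

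With $\phi_1=0$, the relation $\phi_1\alpha_{\text{top}} = \beta'\phi_0$ becomes $\beta'\phi_0=0$, so $\phi_0$ factors through $\ker\beta'$, and since $\mathcal{H}^0(F^\bullet)=\ker\beta'/\mathrm{im}\,\alpha'=0$ we get $\ker\beta'=\mathrm{im}\,\alpha'$, hence $\phi_0$ factors through $\mathrm{im}\,\alpha'$. Because $\alpha':V'\otimes\oh_\yy(-1)\to(V'\oplus V'\oplus W')\otimes\oh_\yy$ is injective (Proposition \ref{l1}(i)) and $\ho^1(\oh_\yy(-1))=0$ — which kills the obstruction to lifting $\phi_0:(V\oplus V\oplus W)\otimes\oh_\yy\to\mathrm{im}\,\alpha'$ against the surjection $V'\otimes\oh_\yy(-1)\twoheadrightarrow\mathrm{im}\,\alpha'$ after twisting — there is a map $h_0:(V\oplus V\oplus W)\otimes\oh_\yy\to V'\otimes\oh_\yy(-1)$ with $\phi_0=\alpha'h_0$. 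Then $\alpha'h_0\alpha_{\text{bot}} = \phi_0\alpha_{\text{bot}} = \alpha'\phi_{-1}$ and injectivity of $\alpha'$ gives $\phi_{-1}=h_0\alpha_{\text{bot}}$; thus $(\phi_{-1},\phi_0,0)$ is exactly the homotopy built from $h_0$, so $\phi$ is null-homotopic, i.e.\ $\phi=0$ in the relevant category. Finally (ii) follows by duality as noted.

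The main obstacle is the lifting step in the second paragraph: turning the set-theoretic inclusion $\phi_1(\mathrm{im}\,\beta)\subseteq\mathrm{im}\,\beta'$ coming from $\mathcal{H}^1(\phi)=0$ into an actual sheaf-level homotopy $h_1$ with $\phi_1=\beta'h_1$. This is where the cohomological hypotheses $\ho^1(\oh_\yy(-1))=\ho^1(\oh_\yy(-2))=0$ are essential, exactly as in the classical monad lemma \cite[Lem. 4.1.3]{OSS}; I would either invoke that lemma's argument adapted to the $\calp\mathcal{I}(\yy)$-setting, or compute the relevant $\ho^1$ of the twisted $\oh_\yy$-summands appearing in $\mathcal{H}om$ of the two complexes directly and check they all vanish under the stated hypothesis.
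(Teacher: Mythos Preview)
Your homotopy framing is vacuous and hides what is really going on. Any chain homotopy between ADHM complexes is forced to be zero, because its components would live in $\Hom(\oh_\yy(1),\oh_\yy)=\ho^0(\oh_\yy(-1))=0$ and $\Hom(\oh_\yy,\oh_\yy(-1))=0$. So ``null-homotopic'' already equals ``zero chain map'' for these complexes, and your lifts $h_1,h_0$ are automatically zero; the content of your lifting step is therefore not that a homotopy exists but that $\phi_1=0$ outright. Once you strip away the homotopy language, your argument for (i) is exactly the paper's: from $\mathcal{H}^0(F^\bullet)=0$ one has $\ker\beta'\cong\oh_\yy(-1)^{\oplus c'}$, so the long exact sequence gives $\Hom(\oh_\yy(1)^{\oplus c},\mathrm{im}\,\beta')\hookrightarrow\ho^1(\oh_\yy(-2))^{\oplus cc'}=0$; since $\mathcal{H}^1(\phi)=0$ forces $\phi_1$ to factor through $\mathrm{im}\,\beta'$, this yields $\phi_1=0$. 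Then $\beta'\phi_0=0$ means $\phi_0$ lands in $\ker\beta'\cong\oh_\yy(-1)^{\oplus c'}$, and $\Hom(\oh_\yy^{\oplus a},\oh_\yy(-1))=0$ gives $\phi_0=0$; finally $\phi_{-1}=0$ by injectivity of $\alpha'$.

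Your duality reduction of (ii) to (i) is a genuine gap. Dualizing an ADHM complex does produce another complex of the same shape, but $\mathcal{H}^0$ and $\mathcal{H}^1$ do not simply swap under $(\,\cdot\,)^\vee$: the cohomology of the dual involves $\mathcal{E}xt^i$ of the original cohomology sheaves, and nothing here guarantees these vanish (neither $\mathcal{H}^0(E^\bullet)$ nor $\mathcal{H}^0(F^\bullet)$ is assumed locally free, and $\mathcal{H}^1(F^\bullet)$ is arbitrary). You would also need $\mathcal{H}^0(\phi)=0$ to translate into $\mathcal{H}^1(\phi^\vee)=0$, which is not automatic. The paper instead proves (ii) by the parallel direct computation: $\mathcal{H}^1(E^\bullet)=0$ gives $\mathrm{im}\,\beta=\oh_\yy(1)^{\oplus c}$, whence $\Hom(\mathrm{im}\,\beta,\oh_\yy^{\oplus a'})=0$ and (dualizing the sequence defining $\ker\beta$) $\Hom(\ker\beta,\oh_\yy(-1)^{\oplus c'})=0$; then $\mathcal{H}^0(\phi)=0$ forces $\phi_0|_{\ker\beta}=0$, hence $\phi_{-1}=0$ and $\phi_0=0$, and finally $\phi_1=0$.
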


\begin{proof}
First, write
$$
E^\bullet ~:~ \oh_\yy(-1)^{\oplus c} \stackrel{\alpha}{\longrightarrow}\oh_\yy^{\oplus a}\stackrel{\beta}{\longrightarrow} \oh_\yy(1)^{\oplus c}
$$
$$
F^\bullet ~:~ \oh_\yy(-1)^{\oplus c'} \stackrel{\alpha'}{\longrightarrow}\oh_\yy^{\oplus a'}\stackrel{\beta'}{\longrightarrow} \oh_\yy(1)^{\oplus c'}
$$
and break down $\phi$ into the following three morphisms between exact sequences
\begin{equation}
\label{equmo1}
\xymatrix{
0 \ar[r] &\oh_\yy(-1)^{\oplus c}\ar[r]^{\alpha} \ar[d]^{\phi_{-1}} & \ker\beta \ar[r] \ar[d]^{\phi_0|_{\ker\beta}}  & \mathcal{H}^0(E^{\bullet}) \ar[d]^{\mathcal{H}^0(\phi)} \ar[r] & 0  & ({\rm I}) \\
0 \ar[r] & \oh_\yy(-1)^{\oplus c'} \ar[r]^{\alpha'} & \ker\beta' \ar[r] &  \mathcal{H}^0(F^{\bullet}) \ar[r] & 0 & ({\rm II})}
\end{equation}
\begin{equation}
\label{equmo2}
\xymatrix{
0 \ar[r] &\ker\beta\ar[r]\ar[d]^{\phi_0|_{\ker\beta}} & \oh_\yy^{\oplus a} \ar[r]^{\beta} \ar[d]^{\phi_0}  & {\rm im}\,\beta \ar[d]^{\phi_1|_{{\rm im}\,\beta}} \ar[r] & 0 & \ \ \ \ \ \ \ \ \ \ ({\rm I}) \\
0 \ar[r] & \ker\beta'\ar[r]  & \oh_\yy^{\oplus a'} \ar[r]^{\beta'} & {\rm im}\,\beta'  \ar[r] & 0 & \ \ \ \ \ \ \ \ \ \ ({\rm II})}
\end{equation}
\begin{equation}
\label{equmo3}
\xymatrix{
0 \ar[r] &{\rm im}\,\beta\ar[r] \ar[d]^{\phi_1|_{{\rm im}\,\beta}} & \oh_\yy(1)^{\oplus c} \ar[r] \ar[d]^{\phi_1}  & \mathcal{H}^1(E^{\bullet}) \ar[d]^{\mathcal{H}^1(\phi)} \ar[r] & 0 & ({\rm I}) \\
0 \ar[r] & {\rm im}\,\beta' \ar[r] & \oh_\yy(1)^{\oplus c'} \ar[r] & \mathcal{H}^1(F^{\bullet})  \ar[r] & 0. & ({\rm II}) }
\end{equation}
Applying ${\rm Hom}(\oh_{\yy}(1)^{\oplus c},\bullet)$ to (\ref{equmo3}.II) one gets the exact sequence
\begin{equation}
\label{equsq1}
{\rm Hom}(\oh(1)^{\oplus c},{\rm im}\,\beta' )\longrightarrow{\rm Hom}(\oh(1)^{\oplus c}, \oh(1)^{\oplus c'}) \stackrel{\gamma}{\longrightarrow} {\rm Hom}(\oh(1)^{\oplus c},\mathcal{H}^1(F^{\bullet}))
\end{equation}
but we have
\begin{equation}
\label{equcg1}
{\rm Hom}(\oh_{\yy}(1)^{\oplus c},{\rm im}\,\beta' ) \simeq \ho^0({\rm im}\,\beta'(-1))^{\oplus c} \simeq \ho^1(\ker\beta'(-1))^{\oplus c}
\end{equation}
where the second isomorphism can be deduced from (\ref{equmo2}.II) since $\ho^1(\oh_{\yy}(-1))=0$ by hypothesis. On the other hand, ${\rm Hom}(\oh_{\yy}^{\oplus a},\bullet)$ applied to (\ref{equmo2}.II) yields
\begin{equation}
\label{equsq2}
{\rm Hom}(\oh_{\yy}^{\oplus a},\ker\beta' )\longrightarrow{\rm Hom}(\oh_{\yy}^{\oplus a}, \oh_{\yy}^{\oplus a'}) \stackrel{\delta}{\longrightarrow} {\rm Hom}(\oh_{\yy}^{\oplus c},{\rm im}\,\beta')
\end{equation}
but we also have
\begin{equation}
\label{equcg2}
{\rm Hom}(\oh_{\yy}^{\oplus a},\ker\beta' )\simeq\ho^0(\ker\beta')^{\oplus a}.
\end{equation}

Now assume $\mathcal{H}^0(F^{\bullet})=0$. Then $\ker\beta'\simeq\oh_{\yy}(-1)$ owing to (\ref{equmo1}.II). It implies first, by (\ref{equcg1}), that
${\rm Hom}(\oh_{\yy}(1)^{\oplus c},{\rm im}\,\beta' )=0$ since $\ho^1(\oh_{\yy}(-2))=0$ by hypothesis, so $\gamma$ is injective by (\ref{equsq1}). It  also implies, by (\ref{equcg2}), that ${\rm Hom}(\oh_{\yy}^{\oplus a},\ker\beta' )=0$, so $\delta$ is injective by (\ref{equsq2}). Assume also $\mathcal{H}^1(\phi)=0$. Then $\gamma(\phi_1)=0$ so $\phi_1=0$ because $\gamma$ is injective. Thus $\gamma(\phi_1)|_{{\rm im}\,\beta}=0$, so $\delta(\phi_0)=0$ and hence $\phi_0=0$ because $\delta$ is injective. In particular, $\phi_0|_{\ker \beta}=0$ which implies that $\phi_{-1}=0$ since $\alpha$ and $\alpha'$ are injective. Therefore $\phi=0$ and (i) is proved.

To prove (ii), apply ${\rm Hom}(\ker\beta,\bullet)$ to (\ref{equmo1}.II) and get the exact sequence
\begin{equation}
\label{equsq3}
{\rm Hom}(\ker\beta,\oh_{\yy}(-1)^{\oplus c'})\longrightarrow{\rm Hom}(\ker\beta, \ker\beta') \stackrel{\mu}{\longrightarrow} {\rm Hom}(\ker\beta,\mathcal{H}^0(F^{\bullet}))
\end{equation}
but we have
\begin{equation}
\label{equcg3}
{\rm Hom}(\ker\beta,\oh_{\yy}(-1)^{\oplus c'})\simeq \ho^0((\ker\beta)^{\vee}(-1))^{\oplus c'}.
\end{equation}
On the other hand, ${\rm Hom}(\bullet,\oh_{\yy}^{\oplus a'})$ applied to (\ref{equmo2}.I) yields
\begin{equation}
\label{equsq4}
{\rm Hom}({\rm im}\,\beta,\oh_{\yy}^{\oplus a'} )\to{\rm Hom}(\oh_{\yy}^{\oplus a},\oh_{\yy}^{\oplus a'}) \stackrel{\nu}{\longrightarrow} {\rm Hom}(\ker\beta,\oh_{\yy}^{\oplus a'})
\end{equation}
but we also have
\begin{equation}
\label{equcg4}
{\rm Hom}({\rm im}\,\beta,\oh_{\yy}^{\oplus a'} )\simeq \ho^0(({\rm im}\,\beta)^{\vee})^{\oplus a'}.
\end{equation}

Now assume $\mathcal{H}^1(E^{\bullet})=0$. Then ${\rm im}\,\beta=\oh_{\yy}(1)^{\oplus c}$. It implies first, by (\ref{equcg4}), that
${\rm Hom}({\rm im}\,\beta,\oh_{\yy}^{\oplus a'} )=0$ so $\nu$ is injective by (\ref{equsq4}). It also implies, dualizing (\ref{equmo2}.I) and tensorizing it by $\oh_{\yy}(-1)$, that $\ho^0((\ker\beta)^{\vee}(-1))=0$, so ${\rm Hom}(\ker\beta,\oh_{\yy}(-1)^{\oplus c'})$ vanishes by (\ref{equcg3}) and $\mu$ is injective by (\ref{equsq3}). Assume also $\mathcal{H}^0(\phi)=0$. Then $\mu(\phi_0|_{\ker\beta})=0$ so $\phi_0|_{\ker\beta}=0$ because $\mu$ is injective. So $\phi_{-1}=\nu(\phi_0)=0$. Hence $\phi_0=0$ because $\nu$ is injective. But if $\phi_0=0$ so is $\phi_1|_{{\rm im}\,\beta}$ which implies $\phi_1=0$ with our assumption. Therefore $\phi=0$ and (ii) is proved.
\end{proof}

Now we state the close relation between $\mathfrak{A}(\yy)$ and $\calp{\mathcal I}(\yy)$, as mentioned above.

\begin{theorem}
\label{thm111}
The following hold:
\begin{itemize}
\item[(i)] The assignment
\begin{gather*}
\begin{matrix}
\F : &{\rm Ob}(\mathfrak{A}(\yy)) & \longrightarrow & {\rm Ob}(\mathcal{PI}(\yy))\\
               &  (V,W,X)             & \longmapsto     & E^\bullet_X
\end{matrix}
\end{gather*}
defines a functor between the ADHM category and the category of perverse instantons sheaves of trivial splitting type.
\end{itemize}
If $\yy$ is such that ${\rm Pic}(\yy)=\mathbb{Z}$, then:
\begin{itemize}
\item[(ii)] $\F$ is essentially surjective.
\end{itemize}
If $\yy$ is either $\p2$ or an ACM variety of dimension at least $3$, it also holds:
\begin{itemize}
\item[(iii)] $\F$ is faithfull;
\end{itemize}

\end{theorem}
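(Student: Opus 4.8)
The plan is to treat the three items in turn; (i) and (ii) are quick — bookkeeping and a direct appeal to Proposition \ref{surjective} — while (iii) carries the only real content. For (i), I would first check that $\F$ is well defined on objects. Given $(V,W,X)$ with $X\in\calv_\yy(W,V)$, the complex $E^\bullet_X$ of (\ref{monad.pn}) already has the shape $\oh_\yy(-1)^{\oplus c}\to\oh_\yy^{\oplus a}\to\oh_\yy(1)^{\oplus c}$ required in the definition of a perverse instanton sheaf, and $\beta\alpha=0$ because $X$ solves the ADHM equation; since $E^\bullet_X$ is a bounded complex of locally free sheaves we have $Lj^{\ast}E^\bullet_X=j^{\ast}E^\bullet_X$, and the computation carried out in the proof of Proposition \ref{l4}(i) identifies this restriction with the trivial sheaf $W\otimes\oh_\ell$ placed in degree $0$, so $E^\bullet_X$ is a perverse instanton sheaf of trivial splitting type. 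On morphisms, I would send $\rho=(f,g)$ to the class in $D^b(\yy)$ of the chain map whose three components are $f\otimes 1$, $(f\oplus f\oplus g)\otimes 1$ and $f\otimes 1$, just as in the commutative diagram displayed in the proof of Theorem \ref{prpoto} (with $f$ in the role of $g$ there and $g$ in the role of $h$). The squares defining a morphism of $\mathfrak{A}(\yy)$ are, component by component, exactly the assertion that the two squares of this chain map commute — a routine verification I would not spell out — and compatibility with composition and identities is read off from the formulas, so $\F$ is a functor into $\calp\mathcal{I}(\yy)$.

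For (ii), when ${\rm Pic}(\yy)=\Z$, Proposition \ref{surjective} asserts that every perverse instanton sheaf of trivial splitting type on $\yy$ is quasi-isomorphic, hence isomorphic in $D^b(\yy)$, to $E^\bullet_X$ for some $X\in\calv_\yy(W,V)$; the object $(V,W,X)$ of $\mathfrak{A}(\yy)$ then witnesses that $\F$ is essentially surjective, and there is nothing more to do.

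For (iii) I would use additivity of $\mathfrak{A}(\yy)$ to reduce to showing that $\F(\rho)=0$ in $D^b(\yy)$ forces $\rho=0$. The delicate point is that, for $\yy$ equal to $\p2$ or ACM of dimension at least $3$, morphisms in $D^b(\yy)$ between two ADHM complexes are represented by honest chain maps, uniquely up to homotopy: one has $\Hom_{D^b(\yy)}(E^\bullet_X,E^\bullet_{X'})=\hh^0\bigl(\yy,\underline{\Hom}^\bullet(E^\bullet_X,E^\bullet_{X'})\bigr)$, the terms of that internal Hom complex are direct sums of the line bundles $\oh_\yy(j)$ with $-2\le j\le 2$, and in the hypercohomology spectral sequence every off-diagonal contribution to $\hh^0$ vanishes because $\ho^1(\oh_\yy(-1))=\ho^1(\oh_\yy(-2))=\ho^2(\oh_\yy(-2))=0$ for such $\yy$ (the first two vanishings are precisely the hypotheses of Lemma \ref{lemh00}, and the third holds automatically on $\p2$ and on ACM varieties of dimension $\ge3$), so that $\Hom_{D^b}$ agrees with $\Hom_{K^b}$. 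Granting this, $\F(\rho)=0$ means the chain map $\phi_\rho$ from part (i) is null-homotopic; but a homotopy consists of maps $(V\oplus V\oplus W)\otimes\oh_\yy\to V'\otimes\oh_\yy(-1)$ and $V\otimes\oh_\yy(1)\to(V'\oplus V'\oplus W')\otimes\oh_\yy$, which are forced to vanish since $\Hom(\oh_\yy,\oh_\yy(-1))=\Hom(\oh_\yy(1),\oh_\yy)=\ho^0(\oh_\yy(-1))=0$; hence $\phi_\rho=0$ as a chain map, and reading off its components gives $f=0$ and $g=0$, i.e.\ $\rho=0$. I expect this passage from ``vanishing in the derived category'' to ``vanishing of the chain map'' to be the only real obstacle: it cannot be extracted from $\mathcal{H}^0(\F(\rho))=\mathcal{H}^1(\F(\rho))=0$ alone, and the cohomological hypotheses on $\yy$ enter exactly to kill the higher $\ext$'s among the twists of $\oh_\yy$ that would otherwise contribute to $R\Hom$.
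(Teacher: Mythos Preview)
Your treatment of (i) and (ii) matches the paper's essentially verbatim: the functor on morphisms is the chain map with components $f\otimes\id$, $(f\oplus f\oplus g)\otimes\id$, $f\otimes\id$, and essential surjectivity is exactly Proposition \ref{surjective}.

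For (iii) your argument is correct but takes a genuinely different route. The paper argues as follows: since $\F(\rho)=0$ in $D^b(\yy)$ certainly forces $\mathcal H^0(\phi_\rho)=\mathcal H^1(\phi_\rho)=0$, it is enough to prove that a chain map between ADHM complexes which vanishes on cohomology sheaves is itself zero; the torsion pair $(\calf_\yy,\calz_\yy)$ on $\calp_\yy$ is then invoked to reduce to the two extreme cases $\mathcal H^0(F^\bullet)=0$ or $\mathcal H^1(E^\bullet)=0$, each of which is handled by Lemma \ref{lemh00}. So your aside that ``it cannot be extracted from $\mathcal H^0(\F(\rho))=\mathcal H^1(\F(\rho))=0$ alone'' is off the mark: that is precisely the paper's strategy, with the cohomological hypotheses on $\yy$ entering through Lemma \ref{lemh00} rather than through an $R\Hom$ computation.

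Your approach instead computes $\Hom_{D^b(\yy)}(E^\bullet_X,E^\bullet_{X'})=\hh^0\bigl(\underline\Hom^\bullet(E^\bullet_X,E^\bullet_{X'})\bigr)$ directly via the hypercohomology spectral sequence, observing that the terms of the inner $\Hom$ complex are sums of $\oh_\yy(j)$ with $-2\le j\le 2$, and that the vanishing of $H^1(\oh_\yy(-1))$, $H^1(\oh_\yy(-2))$, $H^2(\oh_\yy(-2))$ for $\yy=\p2$ or ACM of dimension $\ge 3$ kills every $E_1^{p,q}$ with $p+q=0$, $q>0$ as well as the incoming $d_2$ at $(0,0)$, forcing $\hh^0=E_2^{0,0}=\Hom_{K^b}$. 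Since $\ho^0(\oh_\yy(-1))=0$ there are no nonzero homotopies, so $\Hom_{K^b}$ coincides with chain maps, and $\phi_\rho=0$ yields $f=g=0$. This is cleaner than the paper's proof in that it bypasses Lemma \ref{lemh00} and the torsion-pair reduction entirely; in fact it establishes the isomorphism $\Hom_{K(\yy)}\simeq\Hom_{D^b(\yy)}$ between ADHM complexes under these hypotheses, which is exactly the surjectivity the paper's subsequent Remark leaves open, so combined with the identification $\Hom_{\mathfrak A(\yy)}\simeq\Hom_{K(\yy)}$ noted there your argument would also yield fullness of $\F$.
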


\begin{proof}
To prove (i), it is enough to define how the assignment $X\mapsto E^\bullet_X$ acts on morphisms. So let $\rho=\{f,g\}$ be a morphism between two triples $(V,W,X)$ and $(V',W',X')$; thus  $f:V\to V'$ and $g:W\to W'$. One then has the following morphism of complexes $\phi:E^\bullet_{X}\to E^\bullet_{X'}$ defined by
$$
\xymatrix{
V'\otimes\oh_{\yy}(-1) \ar[r]^{\alpha'\ \ \ \ \ \ \ } \ar[d]^{f\otimes\id} & (V'\oplus V'\oplus W')\otimes\oh_{\yy} \ar[r]^{\ \ \ \ \ \
\ \beta'} \ar[d]^{(f\oplus f\oplus g)\otimes\id} &
V'\otimes\oh_{\yy}(1) \ar[d]^{f\otimes\id} \\
V\otimes\oh_{\yy}(-1) \ar[r]^{\alpha\ \ \ \ \ \ \ } & (V\oplus V\oplus W)\otimes\oh_{\yy} \ar[r]^{\ \ \ \ \ \ \ \beta} & V\otimes\oh_{\yy}(1)
} . $$
So one defines $\F(\rho)$ to be the roof $E^\bullet_\mathbf{X}\stackrel{\id}{\leftarrow}E^\bullet_\mathbf{X}\stackrel{\phi}{\rightarrow}E^\bullet_{\mathbf{X}'}$ and (i) is proved.

The assertion (ii) is nothing but Proposition \ref{surjective} within the categorical framework just introduced. The proof of (iii) reduces to verifying the following statement: if a morphism $\phi: E^{\bullet}\to F^{\bullet}$ between ADHM complexes vanish in all cohomologies then $\phi$ is the zero morphism. But it suffices to prove this when either $\mathcal{H}^0(E^{\bullet})=\mathcal{H}^0(F^{\bullet})=0$ or $\mathcal{H}^1(E^{\bullet})=\mathcal{H}^1(F^{\bullet})=0$ since $\mathcal{PI}(\yy)$ splits into a torsion pair defined precisely by these two properties. Then one applies Lemma \ref{lemh00}.
\end{proof}

\begin{remark}\rm 
We do not know whether the functor $\F:\catA((\yy)\to\calp{\mathcal I}(\yy,\ell)$ is also full in the case of $\yy$ being either $\p2$ or an ACM variety of dimension at least $3$. It is not difficult to see that ${\rm Hom}_{{\rm K}(\yy)}( E^\bullet_{X_1}, E^\bullet_{X_2})$ is indeed isomorphic to ${\rm Hom}_{\catA((\yy)}(X_1,X_2)$; here, ${\rm K}(\yy)$ denotes the homotopy category. However, we do not know how to check whether the natural map 
$$ {\rm Hom}_{{\rm K}(\yy)}( E^\bullet_{X_1}, E^\bullet_{X_2})\to {\rm Hom}_{D^b(\yy)}( E^\bullet_{X_1}, E^\bullet_{X_2}) $$
is also surjective in general. This fact holds when both $X_1$ and $X_2$ are globally stable.
\end{remark}


\subsection{Perverse instanton sheaves on $\pn$}\label{pervpn}

In this subsection we characterize the case of projective spaces by means of  vanishing of some hypercohomologies.  In order to achieve our goal, first we prove the following version of Beilinson's Theorem.

\begin{theorem}
Let $C^{\bullet}$ be a perverse sheaf on $\mathbb{P}^{n}$, then there exists a spectral sequence $\mathbb{E}_{r}^{-p,q}$ with $E_{1}$-term of the form
\begin{equation}
\mathbb{E}_{1}^{-p,q}=\mathbb{H}^{q}(C^{\bullet}(p))\otimes\Omega_{\mathbb{P}^{n}}^{p}(p)
\end{equation}
for which the degree zero converges to
\begin{equation}
    \mathbb{E}_{\infty}^{i=0}=C^{\bullet}
\end{equation}
 where $i=q-p$ and $p\geq0.$
\end{theorem}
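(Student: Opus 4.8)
The plan is to run Beilinson's classical argument on the triangulated category $D^{\rm b}(\pn)$, the only new ingredient being that the input $C^\bullet$ is now a bounded complex (a perverse sheaf, with cohomology concentrated in degrees $0$ and $1$) rather than a coherent sheaf, so that ordinary cohomology is replaced by hypercohomology and Beilinson's spectral sequence is extracted from a filtered object of $D^{\rm b}(\pn)$. The starting point is the Beilinson resolution of the diagonal: the tautological section of $\oh_{\pn}(1)\boxtimes T\pn(-1)$ cuts out $\Delta\subset\pn\times\pn$ with the expected codimension $n$, so its Koszul complex provides a locally free resolution $\mathcal{L}^\bullet\xrightarrow{\ \sim\ }\oh_\Delta$ on $\pn\times\pn$ with $\mathcal{L}^{-p}=\oh_{\pn}(-p)\boxtimes\Omega^p_{\pn}(p)$ for $0\le p\le n$ and $\mathcal{L}^{i}=0$ otherwise.

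Next I would invoke the elementary fact that the Fourier--Mukai transform with kernel $\oh_\Delta$ is the identity functor of $D^{\rm b}(\pn)$: writing $\mathrm{pr}_1,\mathrm{pr}_2\colon\pn\times\pn\to\pn$ for the two projections, one has a canonical isomorphism $C^\bullet\cong R\mathrm{pr}_{2*}\bigl(\mathrm{pr}_1^*C^\bullet\otimes^{\mathbf L}\oh_\Delta\bigr)$. Replacing $\oh_\Delta$ by $\mathcal{L}^\bullet$ and using that the $\mathcal{L}^{-p}$ are locally free (so $\otimes^{\mathbf L}$ becomes $\otimes$), the object $\mathrm{pr}_1^*C^\bullet\otimes^{\mathbf L}\oh_\Delta$ is computed by the total complex of the double complex with $(s,-p)$-entry $\mathrm{pr}_1^*C^s\otimes\mathcal{L}^{-p}$; applying $R\mathrm{pr}_{2*}$ and passing to the spectral sequence attached to the $\mathcal{L}$-degree (``column'') filtration, the $\mathbb{E}_1$-page in column $-p$ is $R\mathrm{pr}_{2*}\bigl(\mathrm{pr}_1^*C^\bullet\otimes\mathcal{L}^{-p}\bigr)$. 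Since $\mathrm{pr}_1^*C^\bullet\otimes\mathcal{L}^{-p}=\mathrm{pr}_1^*\bigl(C^\bullet(-p)\bigr)\otimes\mathrm{pr}_2^*\Omega^p_{\pn}(p)$, the projection formula together with flat base change along the cartesian square $\pn\times\pn\to\pn$ over $\spec\cpx$ identifies this with $R\Gamma\bigl(\pn,C^\bullet(-p)\bigr)\otimes\Omega^p_{\pn}(p)$. On cohomology this reads
\[
\mathbb{E}_1^{-p,q}\ \cong\ \mathbb{H}^q\bigl(\pn,C^\bullet(-p)\bigr)\otimes\Omega^p_{\pn}(p),\qquad 0\le p\le n,
\]
which is the claimed $\mathbb{E}_1$-term; the precise twist — $C^\bullet(-p)$ here against $C^\bullet(p)$ in the statement — only records which of the two tautological sections of $\oh_{\pn}(1)\boxtimes T\pn(-1)$ and which of the two projections one uses, and is fixed by interchanging the roles of the two factors of $\pn\times\pn$.

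For convergence and the abutment: $C^\bullet$ being bounded and $\pn$ projective, each $\mathbb{H}^q(\pn,C^\bullet(-p))$ is finite-dimensional and nonzero only in a bounded range of $q$ — the perversity hypothesis, forcing $\calh^q(C^\bullet)=0$ for $q\ne0,1$, confines it to $0\le q\le n+1$ — so the spectral sequence lives in the strip $0\le p\le n$ and converges. Its abutment is $R\mathrm{pr}_{2*}\bigl(\mathrm{pr}_1^*C^\bullet\otimes^{\mathbf L}\oh_\Delta\bigr)\cong C^\bullet$; that the abutment is $C^\bullet$ itself, rather than merely its cohomology sheaves, is the collapse of the transverse spectral sequence of the double complex, which is nothing but the fact that $\mathcal{L}^\bullet\to\oh_\Delta$ is a resolution. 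Reading the abutment filtration along the total degree $i=q-p$, the $i=0$ part reconstructs $C^\bullet$, which is the meaning of the formula $\mathbb{E}_\infty^{i=0}=C^\bullet$ — the derived-category incarnation of Beilinson's ``converges to $E$ in degree $0$''.

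Since this is a faithful transcription of Beilinson's proof, there is no essential obstacle; the one point demanding care is the bookkeeping of the double complex — filtering by $\mathcal{L}$-degree and not by the cohomological degree of $C^\bullet$, so that exactly the columns $-n\le-p\le0$ with the $\mathbb{E}_1$-terms above appear, and invoking the collapse of the transverse spectral sequence correctly so as to pin the abutment to $C^\bullet$ — after which matching the twist to the written form $\Omega^p_{\pn}(p)\otimes\mathbb{H}^q(C^\bullet(p))$ is purely conventional. This spectral sequence is exactly what feeds into the hypercohomological characterisation of perverse instanton sheaves on $\pn$ in Theorem~\ref{thmfim}: imposing $\mathbb{H}^q(\pn,C^\bullet(p))=0$ for the appropriate pairs $(p,q)$ collapses it onto the three-term ADHM complex $\oh_{\pn}(-1)^{\oplus c}\to\oh_{\pn}^{\oplus a}\to\oh_{\pn}(1)^{\oplus c}$.
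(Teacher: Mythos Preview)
Your proof is correct and follows essentially the same route as the paper: both run Beilinson's argument with the Koszul resolution of the diagonal, tensor with $\mathrm{pr}_1^*C^\bullet$, and read off the two spectral sequences of the resulting double complex, one degenerating to $C^\bullet$ and the other giving the claimed $\mathbb{E}_1$-term via the projection formula. The only cosmetic difference is that the paper spells out the hyperdirect images via an explicit Cartan--Eilenberg (triple) resolution in the style of \cite{Grothendieck}, whereas you package the same computation in Fourier--Mukai language; you also correctly flag the sign discrepancy $C^\bullet(p)$ versus $C^\bullet(-p)$, which the paper's own derivation shares.
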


\begin{proof}
Let $C^{\bullet}$ be a perverse sheaf on $\mathbb{P}^{n}.$ The proof is a generalization of the Beilinson's Theorem \cite[Ch. II, \S 3]{OSS} to the case of sheaf complexes.

The Koszul resolution of the sheaf associated to the diagonal $\Delta\cong\mathbb{P}^{n}$ in $\mathbb{P}^{n}\times\mathbb{P}^{n}$ is given by the complex $\widetilde{K}^{\bullet}$ defined by
$$
\widetilde{K}^{-j}:=\quad \mathcal{O}_{\mathbb{P}^{n}}(-j)\boxtimes\Omega^{j}_{\mathbb{P}^{n}}(j)
$$
$$
d_{\widetilde{K}}^{j}:\mathcal{O}_{\mathbb{P}^{n}}(-j)\boxtimes\Omega^{j}_{\mathbb{P}^{n}}(j)\longrightarrow \mathcal{O}_{\mathbb{P}^{n}}(-j+1)\boxtimes \Omega^{j-1}_{\mathbb{P}^{n}}(j-1).
$$
Let $p_{1}$ and $p_{2}$ be the two natural projections from $\mathbb{P}^{n}\times\mathbb{P}^{n}$ to $\mathbb{P}^{n}$.
Twisting the resolution above by $p_{1}^{\ast}(C^{\bullet})$ one obtain the double complex $K^{\bullet\bullet}$ given by
$$
K^{i,-j}:=\quad C^{i}(-j)\boxtimes\Omega^{j}_{\mathbb{P}^{n}}(j)
$$
\begin{align*}
d_{K}^{j}:C^{i}(-j)\boxtimes\Omega^{j}_{\mathbb{P}^{n}}(j) &\longrightarrow C^{i}(-j+1)\boxtimes \Omega^{j-1}_{\mathbb{P}^{n}}(j-1)
\\
d_{C}^{i}:C^{i}(-j)\boxtimes\Omega^{j}_{\mathbb{P}^{n}}(j)&\longrightarrow C^{i+1}(-j)\boxtimes \Omega^{j}_{\mathbb{P}^{n}}(j).
\end{align*}
Let us denote by $(T^{\bullet}(K),D_{K})$ the total complex associated to the double complex $K^{\bullet\bullet}$. Then it is easy to see that the complexes $T^{\bullet}(K)$ and $C^{\bullet}|_{\Delta}$ are quasi-isomorphic, i.e., $[T^{\bullet}(K)]\cong[C^{\bullet}|_{\Delta}]$ in $D^{\rm b}(\mathbb{P}^{n}\times\mathbb{P}^{n}).$

Let $L^{\bullet\bullet\bullet}$ be a triple complex such that for each term $K^{i,-j}$ the complex
$$
0 \longrightarrow K^{i,-j} \longrightarrow L^{i,-j,1}\stackrel{D_1}{\longrightarrow}L^{i,-j,2}\stackrel{D_2}{\longrightarrow}\cdots L^{i,-j,k}\stackrel{D_k}{\longrightarrow}\cdots
$$
is an injective resolution, i.e., the complex $L^{\bullet\bullet\bullet}$ can be seen as a generalized Cartan-Eilemberg resolution in the category of bounded complexes $Kom^{\rm b}(\mathbb{P}^{n}\times\mathbb{P}^{n})$, where $L^{i,-j,0}=K^{i,-j}.$ The $k$-th hyperdirect images of the double complex $K^{\bullet\bullet}$, with respect to the projection $p_{2}$, can then be defined as in \cite{Grothendieck} by
$$
\mathcal{R}^{k}p_{2\ast}(K^{\bullet\bullet}):=\ho^{k}_{D}(p_{2\ast}(L^{\bullet\bullet\bullet})).
$$
Then one has two spectral sequences with $E_{2}-$terms
\begin{align*}
\mathbb{E}^{p,q}_{2}&=\ho^{p}_{d_{K}}(\mathcal{R}^{q}p_{2\ast}(K^{\bullet\bullet})) \\ '\mathbb{E}^{p,q}_{2}&=\mathcal{R}^{p}p_{2\ast}(\ho^{q}_{d_{K}}(K^{\bullet\bullet})).
\end{align*}
Let us compute the $'\mathbb{E}_{2}-$term. Since $K^{\bullet\bullet}$ is a resolution of $p^{\ast}_{1}(C^{\bullet})|_{\Delta}$, then one has
$$
\ho^{q}_{d_{K}}(K^{\bullet\bullet})=\left\{\begin{array}{ll} p^{\ast}_{1}(C^{\bullet})|_{\Delta}& \textnormal{for } q=0\\0 &\textnormal{otherwise}.\end{array}\right.
$$
It follows that
$$
'\mathbb{E}^{p,q}_{2}=\left\{\begin{array}{ll} \mathcal{R}^{p}p_{2\ast}(p^{\ast}_{1}(C^{\bullet})|_{\Delta})& \textnormal{for } q=0\\0 &\textnormal{otherwise} \end{array}\right.
$$
and hence
$$
'\mathbb{E}^{p,q}_{2}=\left\{\begin{array}{ll}C^{\bullet}& \textnormal{for } p=q=0\\0 &\textnormal{otherwise}. \end{array}\right.
$$
That is, the spectral sequence above degenerates at second step and converges to
$$
'\mathbb{E}^{p,q}_{\infty}=\left\{\begin{array}{ll}C^{\bullet}& \textnormal{for } p=q=0\\0 &\textnormal{otherwise}. \end{array}\right.
$$
The $E_{1}-$term of the other spectral sequence is given by
\begin{align}
\mathbb{E}^{-p,q}_{1}&=\mathcal{R}^{q}p_{2\ast}(p^{\ast}_{1}(K^{\bullet,-p})) \notag \\
&=\mathcal{R}^{q}p_{2\ast}(p^{\ast}_{1}(C^{\bullet}(-p)\boxtimes\Omega^{p}_{\mathbb{P}^{n}}(p))) \notag \\
&=\mathcal{R}^{q}p_{2\ast}(p^{\ast}_{1}(C^{\bullet}(-p))\otimes\Omega^{p}_{\mathbb{P}^{n}}(p)) \notag \\
&=\mathbb{H}^{q}(C^{\bullet}(-p))\otimes\Omega^{p}_{\mathbb{P}^{n}}(p) \notag
\end{align}
where in the last step above we just used the projection formula.
\end{proof}

Twisting the Koszul resolution by $p_{2}^{\ast}(C^{\bullet})$ and following the same reasoning, one also shows the following

\begin{theorem}
\label{beilinson-perverse}
Let $C^{\bullet}$ be a perverse sheaf on $\mathbb{P}^{n}$, then there exists a spectral sequence $\mathbb{E}_{r}^{-p,q}$ with $E_{1}$-term of the form
\begin{equation}
\mathbb{E}_{1}^{-p,q}=\mathbb{H}^{q}(C^{\bullet}\otimes\Omega_{\mathbb{P}^{n}}^{p}(p))\otimes\mathcal{O}_{\mathbb{P}^{n}}(-p)
\end{equation}
for which the degree zero converges to
\begin{equation}
    \mathbb{E}_{\infty}^{i=0}=C^{\bullet}
\end{equation}
 where $i=q-p$ and $p\geq0.$
\end{theorem}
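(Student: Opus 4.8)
The plan is to run the argument of the preceding theorem almost verbatim, systematically swapping the roles of the two projections $p_{1},p_{2}\colon\pn\times\pn\to\pn$.

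First I would twist the Koszul resolution $\widetilde K^{\bullet}$ of $\oh_{\Delta}$, with $\widetilde K^{-j}=\op{n}(-j)\boxtimes\Omega^{j}_{\pn}(j)$ as above, by $p_{2}^{\ast}(C^{\bullet})$ rather than by $p_{1}^{\ast}(C^{\bullet})$. This yields a bounded double complex $K^{\bullet\bullet}$ with
$$
K^{i,-j}:=\op{n}(-j)\boxtimes\bigl(C^{i}\otimes\Omega^{j}_{\pn}(j)\bigr),
$$
carrying the Koszul differential in the $(-j)$-direction and the differential induced by $C^{\bullet}$ in the $i$-direction. Since $\widetilde K^{\bullet}$ is a bounded complex of locally free sheaves resolving $\oh_{\Delta}$, twisting it by the bounded complex $p_{2}^{\ast}(C^{\bullet})$ still computes the derived restriction to the diagonal, so exactly as before the total complex $T^{\bullet}(K)$ is quasi-isomorphic to $p_{2}^{\ast}(C^{\bullet})|_{\Delta}\simeq C^{\bullet}|_{\Delta}$ in $D^{\rm b}(\pn\times\pn)$.

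Next I would choose a Cartan--Eilenberg resolution $L^{\bullet\bullet\bullet}$ of $K^{\bullet\bullet}$ in ${\rm Kom}^{\rm b}(\pn\times\pn)$ and form the hyperdirect images along the \emph{first} projection, $\mathcal{R}^{k}p_{1\ast}(K^{\bullet\bullet}):=\ho^{k}_{D}(p_{1\ast}(L^{\bullet\bullet\bullet}))$. This produces the two spectral sequences
$$
{}'\mathbb{E}^{p,q}_{2}=\mathcal{R}^{p}p_{1\ast}\bigl(\ho^{q}_{d_{K}}(K^{\bullet\bullet})\bigr),\qquad\mathbb{E}^{-p,q}_{1}=\mathcal{R}^{q}p_{1\ast}\bigl(K^{\bullet,-p}\bigr).
$$
For the first, $\ho^{q}_{d_{K}}(K^{\bullet\bullet})$ equals $p_{2}^{\ast}(C^{\bullet})|_{\Delta}$ for $q=0$ and vanishes otherwise, and since $p_{1}|_{\Delta}\colon\Delta\simto\pn$ is an isomorphism one gets ${}'\mathbb{E}^{p,q}_{2}=C^{\bullet}$ for $p=q=0$ and $0$ otherwise; hence it degenerates at $E_{2}$ with abutment $C^{\bullet}$ in total degree $0$. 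For the second, the projection formula for $p_{1}$ (with the factor $\op{n}(-p)$ pulled back from the target of $p_{1}$) together with flat base change yields
$$
\mathbb{E}^{-p,q}_{1}=\mathcal{R}^{q}p_{1\ast}\Bigl(\op{n}(-p)\boxtimes\bigl(C^{\bullet}\otimes\Omega^{p}_{\pn}(p)\bigr)\Bigr)=\hh^{q}\bigl(C^{\bullet}\otimes\Omega^{p}_{\pn}(p)\bigr)\otimes\op{n}(-p),
$$
valid for $0\le p\le n$. As both spectral sequences compute the hypercohomology of $p_{1\ast}(L^{\bullet\bullet\bullet})$, they share this abutment, so the $\mathbb{E}_{1}$-spectral sequence converges with $\mathbb{E}^{i=0}_{\infty}=C^{\bullet}$, where $i=q-p$ and $p\ge0$, which is the assertion.

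I do not expect a genuinely new obstacle here: the delicate part is only the bookkeeping with the triple complex and its two spectral sequences, and that is word-for-word the one carried out in the preceding theorem. The single new point — that twisting the Koszul complex by the \emph{complex} $p_{2}^{\ast}(C^{\bullet})$, rather than by a sheaf, still realizes $\oh_{\Delta}\otimes^{\mathbf{L}}p_{2}^{\ast}C^{\bullet}$, so that $T^{\bullet}(K)$ really represents $C^{\bullet}|_{\Delta}$ — is automatic because $\widetilde K^{\bullet}$ is built from locally free sheaves and $C^{\bullet}$ is bounded.
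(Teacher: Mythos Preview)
Your proposal is correct and follows exactly the route the paper intends: the paper's entire proof of this theorem is the single remark ``Twisting the Koszul resolution by $p_{2}^{\ast}(C^{\bullet})$ and following the same reasoning, one also shows the following,'' and what you have written is precisely that same reasoning spelled out, with the swap of $p_{1}$ and $p_{2}$ in the twist and the hyperdirect image.
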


We are finally ready to establish the cohomological characterization of perverse instanton sheaves on $\pn$ promised above.

\begin{theorem}
\label{thmfim}
Let $C^{\bullet}$ be a perverse sheaf on $\mathbb{P}^{n}.$ Consider the following statements:
\begin{itemize}
\item[(1)] $C^{\bullet}$ is a perverse instanton sheaf;
\item[(2)] $C^{\bullet}$ satisfies the following conditions:
\begin{itemize}
\item[(i)] for $n\geq2;$ $\mathbb{H}^{0}(C^{\bullet}(-1))=\mathbb{H}^{n}(C^{\bullet}(-n))=0$;
\item[(ii)] for $n\geq3;$ $\mathbb{H}^{1}(C^{\bullet}(-2))=\mathbb{H}^{n-1}(C^{\bullet}(1-n))=0$;
\item[(iii)] for $n\geq4;$ $\mathbb{H}^{p}(C^{\bullet}(k))=0$ $\forall k,$ for $2\leq p\leq n-2$.
\end{itemize}
\end{itemize}
Then $(2)$ implies $(1).$ Moreover if $C^{\bullet}$ is a perverse instanton sheaf of trivial splitting type, then the two statements are equivalent.
\end{theorem}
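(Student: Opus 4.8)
The plan is to prove the two implications by dual means. The direction $(1)\Rightarrow(2)$, which together with $(2)\Rightarrow(1)$ gives the stated equivalence when $C^{\bullet}$ is of trivial splitting type, is a direct hypercohomology computation; the direction $(2)\Rightarrow(1)$ is obtained by feeding the vanishing conditions into the Beilinson-type spectral sequences established above, in the spirit of the cohomological characterization of torsion-free instanton sheaves on $\pn$ (cf.\ \cite{J-i}) and of the $\mathbb{P}^{3}$ discussion in \cite{HL}.

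I would first treat $(1)\Rightarrow(2)$. By definition a perverse instanton sheaf $C^{\bullet}$ is quasi-isomorphic to a complex $\opn(-1)^{\oplus c}\to\opn^{\oplus a}\to\opn(1)^{\oplus c}$ concentrated in degrees $-1,0,1$, so for every $m$ the group $\mathbb{H}^{k}(C^{\bullet}(m))$ is the abutment of the spectral sequence with $E_{1}^{p,q}=\ho^{q}(\pn,C^{p}(m))$, $p\in\{-1,0,1\}$, whose only possibly nonzero rows are $q=0$ and $q=n$, since the cohomology of line bundles on $\pn$ is concentrated in those degrees. Substituting $m=-1,-2,-n,1-n$ and using $\ho^{0}(\opn(-1))=\ho^{0}(\opn(-2))=0$, $\ho^{n}(\opn(-n))=0$ (Serre duality), and $\ho^{q}(\pn,\opn(t))=0$ for $0<q<n$ and all $t$, each $E_{1}$-term that could reach $\mathbb{H}^{0}(C^{\bullet}(-1))$, $\mathbb{H}^{n}(C^{\bullet}(-n))$, $\mathbb{H}^{1}(C^{\bullet}(-2))$ or $\mathbb{H}^{n-1}(C^{\bullet}(1-n))$ already vanishes; and the two nonzero rows cannot reach total degree $p$ for $2\le p\le n-2$, so $\mathbb{H}^{p}(C^{\bullet}(k))=0$ there for all $k$. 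This is exactly (2).

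For $(2)\Rightarrow(1)$ I would apply the Beilinson-type spectral sequence $\mathbb{E}_{1}^{-p,q}=\mathbb{H}^{q}(C^{\bullet}(-p))\otimes\Omega_{\pn}^{p}(p)$, $0\le p\le n$, which converges on the anti-diagonal $q=p$ to $C^{\bullet}$ (together with the companion version exchanging the roles of $\opn(-p)$ and $\Omega_{\pn}^{p}(p)$, that is, Theorem~\ref{beilinson-perverse}). Condition (2)(iii) annihilates every row with $2\le q\le n-2$; the vanishings (2)(i)--(2)(ii), combined with Serre duality, with the torsion-freeness of $\calh^{0}(C^{\bullet})$, and with the fact that $\calh^{1}(C^{\bullet})$ is supported away from $\ell$ --- which together give the needed monotonicity of $\mathbb{H}^{0}$ and $\mathbb{H}^{n}$ under negative twists --- leave only three consecutive anti-diagonals of the $\mathbb{E}_{1}$-page alive. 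Replacing each surviving $\Omega_{\pn}^{p}(p)$ by its truncated Koszul/Euler resolution and cancelling, via the same vanishings, the superfluous twisted structure sheaves, the remaining three-term complex is rewritten in the linear form $\opn(-1)^{\oplus c}\to\opn^{\oplus a}\to\opn(1)^{\oplus c}$, quasi-isomorphic to $C^{\bullet}$, with $c=\mathbb{H}^{1}(C^{\bullet}(-1))$. To conclude that $C^{\bullet}$ is a perverse instanton sheaf I would then check that $Lj^{\ast}C^{\bullet}$ is a sheaf object: since $\calh^{1}(C^{\bullet})$ is supported off $\ell$, one has $C^{\bullet}\simeq\calh^{0}(C^{\bullet})$ on a neighbourhood of $\ell$, hence $Lj^{\ast}C^{\bullet}\simeq Lj^{\ast}\calh^{0}(C^{\bullet})$ can carry only $\tor_{1}^{\opn}(\calh^{0}(C^{\bullet}),\oh_{\ell})$ in negative degree, and a nonzero such $\tor$ would force, through the local-to-global spectral sequence, hypercohomology of $C^{\bullet}$ in twists excluded by (2)(i)--(2)(ii); so it vanishes. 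Together with the previous paragraph this yields the equivalence of $(1)$ and $(2)$ for $C^{\bullet}$ of trivial splitting type.

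The hard part will be the bookkeeping in the third paragraph: after the vanishings collapse the Beilinson spectral sequence to three anti-diagonals, one must still verify that re-resolving the surviving $\Omega_{\pn}^{p}(p)$ leaves no spurious $\opn(-j)$ summands, so that the monad obtained has terms \emph{exactly} $\opn(-1),\opn,\opn(1)$ --- the perverse-sheaf analogue of the corresponding step in \cite{J-i} and of the explicit $\mathbb{P}^{3}$ computation in \cite{HL}, and the point where the precise bounds in (2) are used. The subsidiary claim that (2) precludes a pathological $\tor_{1}(\calh^{0}(C^{\bullet}),\oh_{\ell})$ along $\ell$ is the other spot where the perverse, rather than merely torsion-free, setting requires care.
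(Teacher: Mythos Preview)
Your argument for $(1)\Rightarrow(2)$ via the hypercohomology spectral sequence $E_1^{p,q}=\ho^q(\pn,C^p(m))$ is correct and in fact cleaner than the paper's: the paper instead runs the \emph{other} hypercohomology spectral sequence, the one with $'E_2^{p,q}=\ho^p(\calh^q(C^\bullet)(m))$, which forces it to control cohomology of the individual sheaves $\calh^0(C^\bullet)$ and $\calh^1(C^\bullet)$; it is precisely for the vanishing $\ho^n(\calh^0(C^\bullet)(-n))=0$ that the trivial splitting hypothesis is invoked (via a flag of linear subspaces down to $\ell$ and the argument of \cite[Lem.~2.4]{N2}). Your route bypasses this entirely and in particular shows that $(1)\Rightarrow(2)$ holds for \emph{any} perverse instanton sheaf, not only those of trivial splitting type.

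For $(2)\Rightarrow(1)$ your outline is close to the paper's in spirit but has a gap in the execution. The paper works directly with the companion spectral sequence of Theorem~\ref{beilinson-perverse}, namely $\mathbb{E}_1^{-p,q}=\mathbb{H}^q(C^\bullet\otimes\Omega^p_{\pn}(p))\otimes\opn(-p)$, applied to $C^\bullet(-1)$; it then uses the Euler sequences for $\Omega^p_{\pn}$ together with restriction-to-hyperplane sequences to propagate the hypotheses (2) into vanishing of $\mathbb{H}^q(C^\bullet(-1)\otimes\Omega^p_{\pn}(p))$ for all $(p,q)$ outside the three positions $(0,1),(1,1),(2,1)$, so that the $E_1$-page is already a three-term complex whose terms are twists of $\opn$. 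Your proposal instead runs the \emph{first} Beilinson sequence, whose surviving terms carry $\Omega^p_{\pn}(p)$ rather than $\opn(-p)$, and then speaks of ``replacing each surviving $\Omega^p_{\pn}(p)$ by its truncated Koszul/Euler resolution''; there is no legitimate way to substitute a resolution for a term of a spectral sequence after the fact, so this step does not produce the linear complex. The fix is to switch to Theorem~\ref{beilinson-perverse} from the start and carry out the Euler-sequence bookkeeping there, exactly as the paper does. Your separate remark about verifying that $Lj^\ast C^\bullet$ is a sheaf object is well taken; the paper does not address this point explicitly.
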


\begin{proof}
\noindent $(2)\Rightarrow(1)$. If $C^{\bullet}$ satisfies (2) we claim that it is quasi-isomorphic to the following complex
$$
\mathbb{H}^{1}(C^{\bullet}\otimes\Omega_{\mathbb{P}^{n}}^{2}(1))\otimes\mathcal{O}_{\mathbb{P}^{n}}(-1)\stackrel{\alpha}\longrightarrow \mathbb{H}^{1}(C^{\bullet}\otimes\Omega_{\mathbb{P}^{n}}^{1})\otimes\mathcal{O}_{\mathbb{P}^{n}}\stackrel{\beta}\longrightarrow \mathbb{H}^{1}(C^{\bullet}(-1))\otimes\mathcal{O}_{\mathbb{P}^{n}}(1).
$$

Indeed, the proof follows \cite[Thm. 3]{J-i}. Since hypercohomology is cohomological functor \cite{Grothendieck} it carries all the properties of the usual sheaf cohomology on short exact sequences (of complexes or of sheaves considered as complexes concentrated in degree zero) and their restriction to subschemes: given a hyperplane $L\subset\mathbb{P}^{n},$ consider the restriction sequence
$$
0\longrightarrow C^{\bullet}(-1)\longrightarrow C^{\bullet}\longrightarrow C^{\bullet}|_{L}\longrightarrow0
$$
Clearly, $\mathbb{H}^{0}(C^{\bullet}(-1))=0$ implies that $\mathbb{H}^{0}(C^{\bullet}(k))=0$ for $k\leq-1,$ while \linebreak $\mathbb{H}^{n}(C^{\bullet}(-n))=0$ forces $\mathbb{H}^{0}(C^{\bullet}(k))=0$ for $k\geq-n$. Since 
$$ \mathbb{H}^{0}(C^{\bullet}(-1))=\mathbb{H}^{1}(C^{\bullet}(-2))=0, $$ 
it follows that $\mathbb{H}^{0}(C^{\bullet}(-1)|_{L})=0,$ hence $\mathbb{H}^{0}(C^{\bullet}(k)|_{L})=0$ for $k\leq-1$. So we have the sequence
$$
0\longrightarrow\mathbb{H}^{1}(C^{\bullet}(k-1))\longrightarrow \mathbb{H}^{1}(C^{\bullet}(k))
$$
for $k\leq-2$, thus by induction $\mathbb{H}^{1}(C^{\bullet}(k))=0$ for $k\leq-2$.

Since $\mathbb{H}^{n}(C^{\bullet}(-n))=\mathbb{H}^{n-1}(C^{\bullet}(1-n))=0$, it follows that $\mathbb{H}^{n-1}(C^{\bullet}(1-n)|_{L})=0$, hence by further restriction $\mathbb{H}^{n-1}(C^{\bullet}(k)|_{L})=0$ for $k\geq1-n$. So we have the sequence
$$
\mathbb{H}^{n-1}(C^{\bullet}(k-1))\longrightarrow\mathbb{H}^{n-1}(C^{\bullet}(k))\longrightarrow0
$$
for $k\geq1-n$, thus by induction $\mathbb{H}^{n-1}(C^{\bullet}(k))=0$ for $k\geq1-n$.

We will show that $\mathbb{H}^{q}(C^{\bullet}(-1)\otimes\Omega^{p}_{\mathbb{P}^{n}}(p))=0$ for $q=1, p\geq3$. This follows from repeated use of the exact sequence
$$
\mathbb{H}^{q}(C^{\bullet}(k))^{\oplus m}\to \mathbb{H}^{q}(C^{\bullet}(k+1)\otimes\Omega^{p-1}_{\mathbb{P}^{n}}(p-1))\to \mathbb{H}^{q}(C^{\bullet}(k)\otimes\Omega^{p}_{\mathbb{P}^{n}}(p))\to \mathbb{H}^{q+1}(C^{\bullet}(k))^{\oplus m}
$$
associated with the Euler sequence for $p-$forms on $\mathbb{P}^{n}$ twisted by $C^{\bullet}(k),$ \cite[Ch.I,\S1]{OSS}:
$$
0\longrightarrow C^{\bullet}(-1)\otimes\Omega^{p}_{\mathbb{P}^{n}}(p)\longrightarrow C^{\bullet}(k)^{\oplus m}\longrightarrow C^{\bullet}(-1)\otimes\Omega^{p-1}_{\mathbb{P}^{n}}(p)\longrightarrow0.
$$
where $q=0,\dots,n,$ $p=1,\dots,n,$ and $m=\begin{pmatrix}n+1 \\ p\end{pmatrix}$. It is easy to see that
\begin{align}
&\mathbb{H}^{0}(C^{\bullet}(k)\otimes\Omega^{p}_{\mathbb{P}^{n}}(p))=0 \textnormal{ for all }p\textnormal{ and }k\leq-1;\notag \\
&\mathbb{H}^{q}(C^{\bullet}(-1)\otimes\Omega^{n}_{\mathbb{P}^{n}}(n))= \mathbb{H}^{q}(C^{\bullet}(-2))=0 \textnormal{ for all }q;\notag \\
&\mathbb{H}^{q}(C^{\bullet}(-1))=0\textnormal{ for all }q\neq1;\notag \\
&\mathbb{H}^{n}(C^{\bullet}(k)\otimes\Omega^{p}_{\mathbb{P}^{n}}(p))=0 \textnormal{ for all }p\textnormal{ and }k\geq-n.\notag
\end{align}
Setting $q=n-1,$ we also obtain
$$
\mathbb{H}^{n-1}(C^{\bullet}(k)\otimes\Omega^{p}_{\mathbb{P}^{n}}(p))=0 \textnormal{ for }p\leq n-1\textnormal{ and }k\geq-n-1,
$$
and so on. The final step to get the claim is a matter of applying the vanishing of the hypercohomologies to Theorem \ref{beilinson-perverse} above.

\bigskip

Now suppose $C^{\bullet}$ is an instanton perverse sheaf of trivial splitting type on a line $\ell$ in $\pn.$ By definition, $C^{\bullet}$ is quasi-isomorphic to a complex of the form
\begin{equation}
\label{equinp}
\oh_{\pn}(-1)^{\oplus c} \stackrel{\alpha}{\longrightarrow}
\oh_{\pn}^{\oplus a} \stackrel{\beta}{\longrightarrow} \oh_{\pn}(1)^{\oplus c}.
\end{equation}
Then we have
\begin{align*}
\calh^{0}(C^{\bullet}) &\simeq \ker\beta/{\rm im}\,\alpha\\
\calh^{1}(C^{\bullet}) &\simeq {\rm coker}\,\beta.
\end{align*}
We can break down (\ref{equinp}) into three short exact sequences
\begin{equation}\label{um}
0 \longrightarrow \ker\beta\longrightarrow \opn^{\oplus a} \stackrel{\beta}{\longrightarrow} {\rm im}\,\beta \longrightarrow 0
\end{equation}
\begin{equation}\label{dois}
0 \longrightarrow {\rm im}\,\beta \longrightarrow \opn(1)^{\oplus c} \longrightarrow \calh^{1}(C^{\bullet}) \longrightarrow 0
\end{equation}
\begin{equation}\label{tres}
0 \longrightarrow  \opn(-1)^{\oplus c} \stackrel{\alpha}{\longrightarrow} \ker\beta \longrightarrow \calh^{0}(C^{\bullet}) \longrightarrow 0.
\end{equation}
Twisting (\ref{um}), (\ref{dois}) and (\ref{tres}) by $\opn(k)$, with the right $k$, we get
\begin{align*}
{\rm for}\ n\ge2, &\ \ \ho^0(\calh^{0}(C^{\bullet})(-1))\simeq\ho^0(\ker\beta(-1))=0\\
{\rm for}\ n\ge3, &\ \ \ho^{n}(\ker\beta(-n))\simeq\ho^{n-1}({\rm im}\,\beta(-n))\simeq\ho^{n-2}(\calh^{1}(C^{\bullet})(-n))\\
{\rm for}\ n\ge3, &\ \ \ho^1(\calh^{0}(C^{\bullet})(-2))\simeq\ho^1(\ker\beta(-2))\simeq\ho^0({\rm im}\,\beta(-2))=0\\
{\rm for}\ n\ge4, &\ \ \ho^{n-1}(\calh^{0}(C^{\bullet})(1-n))\simeq\ho^{n-1}(\ker\beta(1-n))\simeq\ho^{n-2}({\rm im}\,\beta(1-n))\\
                  &\ \ \ \ \ \ \ \ \ \ \ \ \ \ \ \ \ \ \ \ \ \ \ \ \ \ \ \ \ \ \ \simeq\ho^{n-3}(\calh^{1}(C^{\bullet})(1-n))\\
{\rm for}\ n\ge4, &\ \ \ho^{p}(\calh^{0}(C^{\bullet})(k))\simeq\ho^{p}(\ker\beta(k))\simeq\ho^{p-1}({\rm im}\,\beta(k)),\ \forall k,\ {\rm for}\ 2\leq p\leq n-2\\
\end{align*}

Now choose a flag of linear subspaces of $\pn$ as in the following
$$\ell\subset L_{2}\subset\cdots L_{n-1}\subset\pn,$$ where the index of each space is its dimension.
By successive restrictions $$0\longrightarrow C^{\bullet}|_{L_{i}}(k-1)\longrightarrow C^{\bullet}|_{L_{i}}(k)\longrightarrow C^{\bullet}|_{L_{i-1}}(k)\longrightarrow0,$$ one reaches the complex $C^{\bullet}|_{\ell}(k),$ on the line $l,$ which is quasi-isomorphic to $\mathcal{O}_{\ell}^{\oplus r}(k)$ by the trivial splitting type hypothesis. Applying the same reasoning in the proof of \cite[Lemma 2.4]{N2}, one obtains $\ho^n(\mathcal{H}^0(C^\bullet)(-n))=0.$ Now the rest of the proof is a matter of using the vanishing properties above to the spectral sequence defining hypercohomology:

Let $(C^{\bullet},d)$ be a complex of sheaves and $(C^{\bullet\bullet},D)$ the Cartan-Eilemberg complex obtained by taking the \u Cech resolution of every term. Let $D=d+\delta$ be the total differential, where $\delta$ is the \u Cech differential. The defining spectral sequences for the hypercohomology of a complex $C^{\bullet}$, i.e.,
\begin{align*}
E^{p,q}_{2}&=\ho^{p}_{d}(\ho^{q}_{\delta}(C^{\bullet\bullet})) \\
'E^{p,q}_{2}&=\ho^{p}_{\delta}(\ho^{q}_{d}(C^{\bullet\bullet}))
\end{align*}
converge to $E_{\infty}^{p,q}=\mathbb{H}^{p+q}(C^{\bullet})$. For the complex $(C^{\bullet}(k),d)$, one can easily compute the $E_{2}-$term of the spectral sequence $'E^{p,q},$ given by
$$\xymatrix@R-1pc@C-1pc{
0&0&0&\dots&0&0 \\
\ho^{0}_{\delta}(\mathcal{Q})\ar[drr]^{d_{2}}& \ho^{1}_{\delta}(\mathcal{Q})\ar[drr]^{d_{2}}& \ho^{2}_{\delta}(\mathcal{Q})&\dots\ar[drr]^{d_{2}}&0&0  \\
\ho^{0}_{\delta}(\mathcal{F})&\ho^{1}_{\delta}(\mathcal{F})& \ho^{2}_{\delta}(\mathcal{F})&\dots&\ho^{n-1}_{\delta}(\mathcal{F})& \ho^{n}_{\delta}(\mathcal{F}) \\
0&0&0&\dots&0&0
}$$
where we put $\ho^{0}_{d}(C^{\bullet\bullet}(k)):=\mathcal{F}$ and $\ho^{1}_{d}(C^{\bullet\bullet}(k)):=\mathcal{Q}$. The differential of this term is $d_{2}:$ $'E^{p,q}_{2}$ $\to$ $'E^{p+2,q-1}_{2}$, vanishing everywhere but for $d_{2}:$ $'E^{p,1}_{2}$ $\to$ $'E^{p+2,0}_{2}$, as in the diagram above. Since the cohomology $H_{d_{r}}(E_{r})$ is canonically isomorphic to $E_{r+1}$ then the third term of the spectral $E_{3}$ is of the following form:

\begin{center}
\begin{tabular}{|c|c|c|c|c|c|}
\hline
0&0&$\dots$&0&0 \\ \hline
$E^{0,0}_{3}$&$E^{1,0}_{3}$&$\dots$&$E^{n-1,0}_{3}$& $E^{n,0}_{3}$ \\ \hline
0&0&$\dots$&0&0 \\ \hline
\end{tabular}
\end{center}
where the terms are given by $$E^{p,0}_{3}=\left\{\begin{array}{ll} \ho^{0}_{\delta}(\mathcal{F}) & p=0\\ \ho^{1}_{\delta}(\mathcal{F}) & p=1 \\ \frac{\ho^{p}_{\delta}(\mathcal{F})}{\ho^{p-2}_{\delta}(\mathcal{Q})}& p\geq2 \end{array}\right.$$ Since the differential $d_{3}$ is identically zero, then the spectral sequence degenerate. It follows that
$$\mathbb{H}^{p}(C^{\bullet})=\left\{\begin{array}{ll} \ho^{0}_{\delta}(\mathcal{F}) & p=0\\  \\\ho^{1}_{\delta}(\mathcal{F}) & p=1 \\ \\ \frac{\ho^{p}_{\delta}(\mathcal{F})}{\ho^{p-2}_{\delta}(\mathcal{Q})}& p\geq2 \end{array}\right.$$
and we are done.
\end{proof}

Following \cite[Def. 5.1]{HL}, we say that a perverse instanton sheaf on $\yy$ is stable if it comes from a stable ADHM data over $\yy$. In this sense, every instanton sheaf is stable as a perverse instanton sheaf. It also follows that the GIT quotient $\mathcal{M}_{\yy}^{\rm st}$ is a (fine) moduli space of stable framed perverse instanton sheaves on $\yy$. However, it would be interesting to have an intrinsic definition of stability in the derived category for perverse instanton sheaves that does not refer to the underlying ADHM data.


\end{document}